\definecolor{darkblue}{rgb}{0.0,0.0,0.7}
\newcommand{\R}{\mathbb{R}}%
\newcommand{\inr}[1]{\langle #1 \rangle} 
\newcommand{\N}{\mathbb{N}}
\newcommand{\E}{\mathbb{E}}
\renewcommand{\P}{\mathbb{P}}
\newcommand{\cC}{{\cal C}}
\newcommand{\cL}{{\cal L}}
\newcommand{\cX}{{\cal X}}
\newcommand{\cF}{{\cal F}}
\newcommand{\cZ}{{\cal Z}}
\newcommand{\cM}{{\cal M}}
\newcommand{\var}{\text{Var}}%
\newcommand{\prodsca}[2]{\langle #1,#2 \rangle}%
\newcommand{\norm}[1]{\|#1\|}%
\DeclareMathOperator*{\Span}{span}
\DeclareMathOperator*{\argmin}{argmin}
\DeclareMathOperator*{\Star}{star}
\DeclareMathOperator*{\tr}{tr}
\DeclareMathOperator*{\ra}{rank}
\DeclareMathOperator{\ve}{vec}
\DeclareMathOperator{\pen}{pen}
\DeclareMathOperator{\rank}{rank}
\newcommand{\1}{{\rm 1}\kern-0.24em{\rm I}}
\newtheorem{theorem}{Theorem}%
\newtheorem{lemma}[theorem]{Lemma}%
\newtheorem{definition}[theorem]{Definition}%
\newtheorem{proposition}{Proposition}%
\newtheorem{corollary}{Corollary}%
\newtheorem{remark}{Remark}
\newtheorem{assumption}{Assumption}%
\begin{document}

\title{Sharp oracle inequalities for high-dimensional matrix
  prediction}

\date{\today}

\author{St\'ephane Ga\"iffas$^{1, 3}$ and Guillaume Lecu\'e$^{2, 3}$}

\footnotetext[1]{Universit\'e Pierre et Marie Curie - Paris~6,
  Laboratoire de Statistique Th\'eorique et Appliqu\'ee. \emph{email}:
  \texttt{stephane.gaiffas@upmc.fr}}

\footnotetext[2] {CNRS, Laboratoire d'Analyse et Math\'ematiques
  appliqu\'ees, Universit\'e Paris-Est - Marne-la-vall\'ee
  \emph{email}: \texttt{guillaume.lecue@univ-mlv.fr}}

\footnotetext[3]{This work is supported by French Agence Nationale de
  la Recherce (ANR) ANR Grant \textsc{``Prognostic''}
  ANR-09-JCJC-0101-01. (\texttt{http://www.lsta.upmc.fr/prognostic/index.php})
}

\maketitle

\begin{abstract}
  We observe $(X_i,Y_i)_{i=1}^n$ where the $Y_i$'s are real valued
  outputs and the $X_i$'s are $m\times T$ matrices. We observe a new
  entry $X$ and we want to predict the output $Y$ associated with it.
  We focus on the high-dimensional setting, where $m T \gg n$. This
  includes the matrix completion problem with noise, as well as other
  problems. We consider linear prediction procedures based on
  different penalizations, involving a mixture of several norms: the
  nuclear norm, the Frobenius norm and the $\ell_1$-norm. For these
  procedures, we prove sharp oracle inequalities, using a statistical
  learning theory point of view. A surprising fact in our results is
  that the rates of convergence do not depend on $m$ and $T$ directly.
  The analysis is conducted without the usually considered incoherency
  condition on the unknown matrix or restricted isometry condition on
  the sampling operator.  Moreover, our results are the first to give
  for this problem an analysis of penalization (such nuclear norm
  penalization) as a regularization algorithm: our oracle inequalities
  prove that these procedures have a prediction accuracy close to the
  deterministic oracle one, given that the reguralization
  parameters are well-chosen. \\

  \noindent%
  \emph{Keywords.} High dimensional matrix ; Matrix completion ;
  Oracle inequalities ; Schatten norms ; Nuclear norm ; Empirical risk
  minimization~; Empirical process theory~; Sparsity
\end{abstract}

\section{Introduction}
\label{sec:introduction}

\subsection{The model and some basic definitions}
\label{sec:model}

Let $(X, Y)$ and $D_n = (X_i, Y_i)_{i=1}^n$  be $n+1$ i.i.d
random variables with values in $ \cM_{m, T}\times \R$, where
$\cM_{m, T}$ is the set of matrices with $m$ rows and $T$ columns with
entries in $\R$. Based on the observations  $D_n$, we have in mind to predict the real-valued ouput $Y$ by a linear transform of the input variable $X$. We focus on the
high-dimensional setting, where $mT \gg n$. We use a ``statistical
learning theory point of view'': we do not assume that $\E(Y | X)$ has
a particular structure, such as $\E(Y | X) = \inr{X, A_0}$ for some
$A_0 \in \cM_{m, T}$, where $\inr{\cdot, \cdot}$ is the standard
Euclidean inner product given for any $ A, B \in \cM_{m, T}$ by
\begin{equation}
  \label{eq:inner-product}
  \prodsca{A}{B} := \tr(A^\top B).
\end{equation}
The statistical performance of a linear predictor $\inr{X,A}$ for some $A \in \cM_{m, T}$ is measured by the quadratic risk
\begin{equation}
  \label{eq:Quadratic-Risk}
  R(A) := \E [ ( Y - \inr{X,A} )^2 ].
\end{equation}
If $\hat A_n \in \cM_{m, T}$ is a statistic constructed from the observations $D_n$, then its risk is given
by the conditional expectation
\begin{equation*}
  R(\hat A_n) := \E [ ( Y - \inr{X, \hat A_n} )^2 | D_n].
\end{equation*}
A natural candidate for the prediction of $Y$ using $D_n$ is the
\textit{empirical risk minimization procedure}, namely any element in $\cM_{m,T}$ minimizing the empirical risk $R_n(\cdot)$ defined for all $A\in\cM_{m,T}$ by 
\begin{equation*}
   R_n(A) =
  \frac{1}{n} \sum_{i=1}^n (Y_i - \inr{X_i, A})^2.
\end{equation*}
It is well-known that the excess risk of this procedure is of order
$mT / n$. In the high dimensional setting, this rate is not going to
zero. So, if $X \mapsto \inr{A_0,X}$ is the best linear prediction of
$Y$ by $X$, we need to know more about $A_0$ in order to construct
algorithms with a small risk. In particular, we need to know that
$A_0$ has a ``low-dimensional structure''. In this setup, this is
usually done by assuming that $A_0$ is low rank. A first idea is then
to minimize $R_n$ and to penalize matrices with a large rank. Namely,
one can consider
\begin{equation}
  \label{eq:ERm-rank}
  \hat A_n \in \argmin_{A \in \cM_{m, T}} \big\{ R_n(A) + \lambda
  \rank(A) \big\},
\end{equation}
for some regularization parameter $\lambda > 0$. But $A \mapsto {\rm rank}(A)$ is far from being a
convex function, thus minimizing~\eqref{eq:ERm-rank} is very difficult
in practice, see~\cite{fazel2003log} for instance on this
problem. Convex relaxation of~\eqref{eq:ERm-rank} leads to the
following convex minimization problem
\begin{equation}
  \label{eq:ERm-S1}
  \hat A_n \in \argmin_{A \in \cM_{m, T}} \big\{ R_n(A) + \lambda
  \norm{A}_{S_1} \big\},
\end{equation}
where $\norm{\cdot}_{S_1}$ is the \emph{$1$-Schatten} norm, also known
as \emph{nuclear norm} or \emph{trace norm}. This comes from the fact
that the nuclear norm is the convex envelope of the rank on the unit
ball of the spectral norm, see~\cite{Fazel04rankminimization}.  For
any matrix $A \in \cM_{m, T}$, we denote by $s_1(A), \ldots,
s_{\ra(A)}(A)$ its nonincreasing sequence of singular values. For
every $p \in [1,\infty]$, the $p$-Schatten norm of $A$ is given by
\begin{equation}
  \label{eq:Schatten-norm}
  \norm{A}_{S_p} := \Big(\sum_{j=1}^{\ra(A)} s_j(A)^p \Big)^{1/p}.
\end{equation}
In particular, the $\norm{\cdot}_{S_\infty}$-norm is the
\emph{operator norm} or \emph{spectral norm}. The
$\norm{\cdot}_{S_2}$-norm is the \emph{Frobenius norm}, which
satisfies
\begin{equation*}
  \norm{A}_{S_2}^2 = \sum_{i, j} A_{i, j}^2 = \prodsca{A}{A}.
\end{equation*}

\subsection{Motivations}
\label{sec:motivations}

A particular case of the matrix prediction problem described in
Section~\ref{sec:model} is the problem of \emph{(noisy) matrix
  completion}, see \cite{srebro2005maximum, srebro2005rank}, which
became very popular because of the buzz surrounding the Netflix
prize\footnote{\texttt{http://www.netflixprize.com/}}. In this
problem, it is assumed that $X$ is uniformly distributed over the set
$\{ e_{p, q} : 1 \leq p \leq m, 1 \leq q \leq T \}$, where $e_{p,q}
\in \cM_{m, T}$ is such that $(e_{p,q})_{i,j} = 0$ when $i \neq q$ or
$j \neq p$ and $(e_{p,q})_{p,q} = 1$. If $\E(Y | X) = \inr{A_0, X}$
for some $A_0 \in \cM_{m, T}$, then the $Y_i$ are $n$ noisy
observations of the entries of $A_0$, and the aim is to denoise the
observed entries and to fill the non-observed ones. 

\emph{First motivation.} Quite surprisingly, for matrix completion
without noise ($Y_i = \prodsca{X_i}{A_0}$), it is proved
in~\cite{candes-recht08} and~\cite{candes-tao1} (see also
\cite{gross2009recovering}, \cite{recht2009simpler}) that nuclear norm
minimization is able, with a large probability (of order $1 - m^{-3})$
to recover \emph{exactly} $A_0$ when $n > c r (m + T) (\log n)^6$,
where $r$ is the rank of $A_0$. This result is proved under a
so-called \emph{incoherency} assumption on $A_0$. This assumption
requires, roughly, that the left and right singular vectors of $A_0$
are well-spread on the unit sphere. Using this incoherency assumption
\cite{candes-plan1}, \cite{DBLP:journals/corr/abs-0906-2027} give
results concerning the problem of matrix completion with
noise. However, recalling that this assumption was introduced in order
to prove \emph{exact} completion, and since in the noisy case it is
obvious that exact completion is impossible, a natural goal is then to
obtain results for noisy matrix completion without the incoherency
assumption. This is a first motivation of this work: we derive very
general sharp oracle inequalities without any assumption on $A_0$, not
even that it is low-rank. More than that, we don't need to assume that
$\E(Y | X) = \inr{X, A_0}$ for some $A_0$, since we use a statistical
learning point-of-view in the statement of our results. More
precisely, we construct procedures $\hat A_n$ satisfying \textit{sharp
  oracle inequalities} of the form
\begin{equation}
  \label{eq:oracle}
  R( \hat{A}_n) \leq \inf_{A \in \cM_{m, T}} \big\{ R(A) + r_{n}(A)
  \big\}
\end{equation}
that hold with a large probability, where $r_{n}(A)$ is a
\textit{residue} related to the penalty used in the definition of
$\hat A_n$ that we want as small as possible. By ``sharp'' we mean
that in the right hand side of~\eqref{eq:oracle}, the constant in
front of $R(A)$ is equal to one.

\emph{A surprising fact} in our results is that, for penalization
procedures that involve the 1-Schatten norm (and 2-Schatten norm if a
mixed penalization is considered), the residue $r_n(\cdot)$ does not depend
on $m$ and $T$ directly: it only depends on the 1-Schatten norm of
$A_0$, see Section~\ref{sec:main_results} for details. This was not,
as far as we know, previously noticed in literature (all the upper
bounds obtained for $\norm{\hat A_n - A_0}_{S_2}^2$ depend directly on
$m$ and $T$ and on $\norm{A_0}_{S_1}$ or on its rank and on
$\norm{A_0}_{S_\infty}$, see the references above and below). This
fact can be used to argue that $\norm{\cdot}_{S_1}$ is a better
measure of sparsity than the rank, and it points out an interesting
difference between nuclear-norm penalization (also called ``Matrix
Lasso'') and the Lasso for vectors.

In~\cite{rohde-tsyb09}, which is a work close to ours, upper bounds
for $p$-Schatten penalization procedures for $0 < p \leq 1$ are given
in the same setting as ours, including in particular the matrix
completion problem. The results are stated without the incoherency
assumption for matrix completion. But for this problem, the upper
bounds are given using the empirical norm $\norm{\hat A_n - A_0}_n^2 =
\sum_{i=1}^n \inr{X_i, \hat A_n - A_0}^2 / n$ only. An upper bound for
this measure of accuracy gives information only about the denoising
part and not about the filling part of the matrix completion
problem. Our results have the form~\eqref{eq:oracle}, and taking $A_0$
instead of the minimum in this equation gives an upper bound for
$R(\hat A_n) - R(A_0)$, which is equal to $\norm{\hat A_n -
  A_0}_{S_2}^2 / (m T)$ in the matrix completion problem when $\E(Y |
X) = \inr{X, A_0}$ (see Section~\ref{sec:main_results}).

\emph{Second motivation.} In the setting considered here, an
assumption called \emph{Restricted Isometry} (RI) on the sampling
operator $\cL(A) = (\inr{X_1, A}, \ldots, \inr{X_n, A}) / \sqrt n$ has
been introduced in \cite{recht2007guaranteed} and used in a series of
papers, see~\cite{rohde-tsyb09}, \cite{candes-plan2},
\cite{negahban2009unified, negahban2009estimation}. This assumption is
the matrix version of the restricted isometry assumption for vectors
introduced in \cite{candes2005decoding}. Note that in the
high-dimensional setting ($mT \gg n$), this assumption is not
satisfied in the matrix completion problem, see~\cite{rohde-tsyb09}
for instance, which works with and without this assumption. The RI
assumption is very restrictive and (up to now) is only  satisfied by some special random matrices (cf. \cite{MR2417886,MR2321621,MR2453368,MR2373017} and references therein). This is a second motivation for this work: our results do not
require any RI assumption. Our assumptions on $X$ are very mild, see
Section~\ref{sec:main_results}, and are satisfied in the matrix
completion problem, as well as other problems, such as the multi-task
learning.

\emph{Third motivation.} Our results are the first to give an analysis
of nuclear-norm penalization (and of other penalizations as well, see
below) as a regularization algorithm. Indeed, an oracle inequality of
the form~\eqref{eq:oracle} proves that these penalization procedures
have a prediction accuracy close to the deterministic oracle one,
given that the reguralization parameters are well-chosen.

\emph{Fourth motivation.} We give oracle inequalities for penalization
procedures involving a mixture of several norms: $\norm{\cdot}_{S_1}$,
$\norm{\cdot}_{S_2}^2$ and the $\ell_1$-norm $\norm{\cdot}_1$. As far
as we know, no result for penalization using several norms was
previously given in literature for high-dimensional matrix prediction.

Procedures based on 1-Schatten norm penalization have been considered
by many authors recently, with applications to multi-task learning and
collaborative filtering. The first studies are probably the ones given
in~\cite{srebro2005maximum,srebro2005rank}, using the hinge loss for
binary classification. In \cite{MR2417263}, it is proved, under some
condition on the $X_i$, that nuclear norm penalization can
consistently recover $\rank(A_0)$ when $n \rightarrow +\infty$. Let us
recall also the references we mentioned above and close other ones
\cite{Fazel04rankminimization,recht2007guaranteed},
\cite{candes-plan2,cai_candes_shen08,candes-recht08,candes-plan1,candes-tao1},
\cite{keshavan2009matrix, DBLP:journals/corr/abs-0906-2027},
\cite{rohde-tsyb09}, \cite{gross2009recovering},
\cite{recht2009simpler,recht2007guaranteed},
\cite{negahban2009unified,negahban2009estimation},
\cite{argyriou2010spectral,argyriou2008convex,argyriou2008spectral},
\cite{abernethy2009new}.


\subsection{The procedures studied in this work}

If $\E(Y | X) = \inr{X, A_0}$ where $A_0$ is low rank, in the sense
that $r \ll n$, nuclear norm penalization~\eqref{eq:ERm-S1} is likely
to enjoy some good prediction performances. But, if we know more about
the properties of $A_0$, then other penalization procedure can be
considered. For instance, if we know that the non-zero singular values
of $A_0$ are ``well-spread'' (that is almost equal) then it may be
interesting to use the ``regularization effect'' of a ``$S_2$ norm'' based  penalty in the same spirit as ``ridge type'' penalty for vectors or functions. Moreover, if we know that many entries of $A_0$ are close
or equal to zero, then using also a $\ell_1$-penalization
\begin{equation}
  \label{eq:penalty-sparse-metrix}
  A \mapsto \norm{A}_1 = \sum_{\substack{1\leq p\leq m\\1\leq q\leq
      T}} |A_{p,q}|
\end{equation}
may improve even further the prediction. In this paper, we
consider a penalization that uses a mixture of several norms: for
$\lambda_1, \lambda_2, \lambda_3 > 0$, we consider
\begin{equation}
  \label{eq:penalty-mixture}
  \pen_{\lambda_1,\lambda_2,\lambda_3}(A) =
  \lambda_1\norm{A}_{S_1} + \lambda_2\norm{A}_{S_2}^2 + \lambda_3\norm{A}_1
\end{equation}
and we will study the prediction properties of
\begin{equation}
  \label{eq:procedure}
  \hat A_n(\lambda_1,\lambda_2,\lambda_3) \in \argmin_{A \in
    \cM_{m,T}}\ \Big\{ R_n(A) + 
  \pen_{\lambda_1, \lambda_2, \lambda_3}(A) \Big\}.
\end{equation}
Of course, if more is known on the structure of $A_0$, other penalty functions can be considered.

We obtain sharp oracle inequalities for the procedure $\hat
A_n(\lambda_1,\lambda_2,\lambda_3)$ for any values of
$\lambda_1,\lambda_2,\lambda_3\geq 0$ (excepted for 
$(\lambda_1,\lambda_2,\lambda_3)=(0,0,0)$ which provides the well-studied empirical risk minimization procedure). In particular, depending on
the ``a priori'' knowledge that we have on $A_0$ we will consider
different values for the triple $(\lambda_1,\lambda_2,\lambda_3)$. If
$A_0$ is only known to be low-rank, one should choose $\lambda_1>0$
and $\lambda_2 = \lambda_3=0$. If $A_0$ is known to be low-rank with
many zero entries, one should choose $\lambda_1,\lambda_3 > 0$ and
$\lambda_2=0$. If $A_0$ is known to be low-rank with well-spread
non-zero singular values, one should choose $\lambda_1,\lambda_2 > 0$
and $\lambda_3 = 0$. Finally, one should choose $\lambda_1, \lambda_2,
\lambda_3 > 0$ when a significant part of the entries of $A_0$ are
zero, that $A_0$ is low rank and that the non-zero singular values of
$A_0$ are well-spread.

\section{Results}
\label{sec:main_results}

We will use the following notation: for a matrix $A \in \cM_{m, T}$,
$\ve(A)$ denotes the vector of $\R^{mT}$ obtained by stacking its
columns into a single vector. Note that this is an isometry between
$(\cM_{m, T}, \norm{\cdot}_{S_2})$ and $(\R^{mT},
|\cdot|_{\ell_2^{mT}})$ since $\inr{A, B} = \inr{\ve A, \ve B}$. We
introduce also the $\ell_\infty$ norm $\norm{A}_\infty = \max_{p, q}
|A_{p, q}|$.  Let us recall that for $\alpha \geq1$, the
$\psi_\alpha$-norm of a random variable $Z$ is given by
$\norm{Z}_{\psi_\alpha} := \inf \{ c > 0 : \E[ \exp( |Z|^\alpha/
c^\alpha)) ] \leq 2 \}$ and a similar norm can be defined for $0<\alpha<1$ (cf. \cite{LT:91}).

\subsection{Assumptions and examples}
\label{sec:assumptions}
The first assumption concers the ``covariate'' matrix $X$.
\begin{assumption}[Matrix $X$]
  \label{ass:X}
  There are positive constants $b_{X,\infty}, b_{X,\ell_\infty}$ and
  $b_{X,2}$ such that $\norm{X}_{S_\infty}\leq b_{X, \infty}$,
  $\norm{X}_{\infty}\leq b_{X, \ell_\infty}$ and $\norm{X}_{S_2} \leq
  b_{X, 2}$ almost surely. Moreover, we assume that the ``covariance
  matrix''
  \begin{equation*}
    \Sigma := \E[ \ve X (\ve X)^\top]
  \end{equation*}
  is invertible.
\end{assumption}
 This assumption  is met in the matrix completion and the
multitask-learning problems:
\begin{enumerate}
\item In the \textit{matrix completion problem}, the matrix $X$ is
  uniformly distributed over the set $\{ e_{p, q} : 1 \leq p \leq m, 1
  \leq q \leq T \}$ (see Section~\eqref{sec:motivations}), so in this
  case $\Sigma = (mT)^{-1} I_{m \times T}$ and $b_{X, 2} = b_{X,
    \infty} = b_{X, \ell_\infty} = 1$.
\item In the \textit{multitask-learning problem}, the matrix $X$ is
  uniformly distributed in
  $\{A_j(x_{j,s}):j=1,\ldots,T;s=1,\ldots,k_j\}$, where
  $(x_{j,s}:j=1,\ldots,T;s=1,\ldots,k_j)$ is a family of vectors in
  $\R^m$ and for any $j=1,\ldots,T$ and $x\in\R^m$,
  $A_j(x)\in\cM_{m,T}$ is the matrix having the vector $x$ for $j$-th
  column and zero everywhere else. So, in this case $\Sigma$ is equal
  to $T^{-1}$ times the $mT\times mT$ block matrix with $T$ diagonal
  blocks of size $m \times m$ made of the $T$ matrices
  $k_{j}^{-1}\sum_{i=1}^{k_j} x_{j,s}x_{j,s}^\top$ for $j=1,\ldots,T$.

  If we assume that the smallest singular values of the matrices
  $k_j^{-1}\sum_{i=1}^{k_j} x_{j,s}x_{j,s}^\top \in \cM_{m,m}$ for
  $j=1,\ldots,T$ are larger than a constant $\sigma_{\min}$ (note that
  this implies that $k_j \geq m$), then $\Sigma$ has its smallest
  singular value larger than $\sigma_{\min}T^{-1}$, so it is
  invertible. Moreover, if the vectors $x_{j, s}$ are normalized in
  $\ell_2$, then one can take $b_{X, \infty} = b_{X, \ell_\infty} =
  b_{X, 2} = 1$.
\end{enumerate}

The next assumption deals with the regression function of $Y$ given $X$. It is standard in regression analysis.

\begin{assumption}[Noise]
  \label{ass:Y}
  There are positive constants $b_Y, b_{Y, \infty}, b_{Y,\psi_2},
  b_{Y, 2}$ such that $\norm{Y-\E(Y|X)}_{\psi_2} \leq b_{Y, \psi_2}$,
  $\norm{\E(Y|X)}_{L_\infty} \leq b_{Y, \infty}$, $\E[(Y-\E(Y|X))^2 |
  X ] \leq b_{Y, 2}^2$ almost surely and $\E Y^2 \leq b_Y^2$.
\end{assumption}
In particular, any model $Y = \inr{A_0, X} + \varepsilon$, where
$\norm{A_0}_{S_\infty} < +\infty$ and $\varepsilon$ is a sub-gaussian
noise satisfies Assumption~\ref{ass:Y}.  Note that by using the whole
strength of Talagrand's concentration inequality on product spaces for
$\psi_\alpha$ ($0<\alpha\leq1$) random variables obtained in
\cite{MR2424985}, other type of tail decay of the noise could be
considered (yet leading to slower decay of the residual term)
depending on this assumption.


\subsection{Main results}
\label{sec:main-results}

In this section we state our main results. We give sharp oracle
inequalities for the penalized empirical risk minimization procedure
\begin{equation}
  \label{eq:PERM}
  \hat A_n \in \argmin_{A \in \cM_{m,T}} \Big\{ \frac 1n \sum_{i=1}^n (Y_i
  - \prodsca{X_i}{A})^2 + \pen(A) \Big\},
\end{equation}
where $\pen(A)$ is a penalty function which will be either a
pure $\norm{\cdot}_{S_1}$ penalization, or a ``matrix elastic-net''
penalization $\norm{\cdot}_{S_1} + \norm{\cdot}_{S_2}^2$ or other
penalty functions involving the $\norm{\cdot}_1$ norm.


\begin{theorem}[Pure $\norm{\cdot}_{S_1}$ penalization]
  \label{thm:oracle-S1}
  There is an absolute constants $c > 0$ such that the following
  holds. Let Assumptions~\ref{ass:X} and \ref{ass:Y} hold, and let
  $x>0$ be the some fixed confidence level. Consider any
  \begin{equation*}
    \hat A_n \in \argmin_{A \in \cM_{m, T}} \big\{ R_n(A) + \lambda_{n, x}
    \norm{A}_{S_1} \big\},
  \end{equation*}
  for
  \begin{equation*}
    \lambda_{n, x} = c_{X, Y} \frac{(x + \log n) \log n}{\sqrt n},
  \end{equation*}
  where $c_{X, Y} := c (1 + b_{X, 2}^2 + b_Y b_X + b_{Y, \psi_1}^2 +
  b_{Y, \infty}^2 + b_{Y, 2}^2 + b_{X, \infty}^2)$. Then one has, with
  a probability larger than $1 - 5e^{-x}$, that
  \begin{equation*}
    R(\hat A_n) \leq \inf_{A \in \cM_{m, T}} \big\{R(A) + \lambda_{n, x} (1 + \norm{A}_{S_1}) \big\}.
  \end{equation*}
\end{theorem}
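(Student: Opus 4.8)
The plan is to run the standard argument for penalized empirical risk minimization, using the fact that the announced rate is ``slow'' (of order $n^{-1/2}$ up to logarithmic factors), so that no restricted isometry or restricted eigenvalue property is needed; the quadratic structure of the loss enters only to produce the sharp leading constant $1$. Throughout, write $f_A(x):=\inr{x,A}$ and $\ell_A(x,y):=(y-f_A(x))^2$, let $A^*:=\argmin_{A\in\cM_{m,T}}R(A)$, which is unique since $\ve(A^*)=\Sigma^{-1}\E[Y\ve(X)]$ and $\Sigma$ is invertible (Assumption~\ref{ass:X}), and set $\xi:=Y-f_{A^*}(X)$, so that $\E[\xi\,\ve(X)]=0$ by first-order optimality. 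The starting point is the Pythagorean identity $R(A)=\E[\xi^2]+\norm{f_A-f_{A^*}}_{L_2}^2$, valid for every $A$, whence $R(\hat A_n)-R(A)=\norm{f_{\hat A_n}-f_{A^*}}_{L_2}^2-\norm{f_A-f_{A^*}}_{L_2}^2$; it is this identity that yields the constant $1$. By definition of $\hat A_n$ one has, for every $A$, the basic inequality
\[
  R(\hat A_n)-R(A)\le (R-R_n)(\hat A_n)-(R-R_n)(A)+\lambda_{n,x}\big(\norm{A}_{S_1}-\norm{\hat A_n}_{S_1}\big),
\]
so everything reduces to controlling $(R-R_n)(\hat A_n)-(R-R_n)(A)$ by $\lambda_{n,x}(1+\norm{\hat A_n}_{S_1})$ (up to the penalty slack). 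Expanding the squares and cancelling the $Y^2$ term, this increment splits into a \emph{multiplier} part $-2(R-R_n)[\,Y f_{\,\cdot\,-A}\,]=2\inr{\frac1n\sum_i Y_iX_i-\E[YX],\hat A_n-A}$ and a \emph{quadratic} part $(\E-\frac1n\sum_i)[\inr{X,\hat A_n}^2-\inr{X,A}^2]$.

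The multiplier part is handled by trace duality: it is at most $2\,\big\|\frac1n\sum_i Y_iX_i-\E[YX]\big\|_{S_\infty}\norm{\hat A_n-A}_{S_1}$, so everything on this side hinges on an upper bound for the operator norm of the random matrix $\frac1n\sum_i Y_iX_i-\E[YX]$ (equivalently, after centering, of $\frac1n\sum_i\xi_iX_i$ plus a bounded-mean remainder). This is where the constant $c_{X,Y}$, the logarithmic factors, and the absence of any dependence on $m$ and $T$ all enter: I would bound $\norm{\cdot}_{S_\infty}\le\norm{\cdot}_{S_2}$, which is dimension free, with $\E\big\|\frac1n\sum_i\xi_iX_i\big\|_{S_2}\le(\frac1n\E[\xi^2\norm{X}_{S_2}^2])^{1/2}\le b_{Y,2}b_{X,2}/\sqrt n$ by Assumptions~\ref{ass:X}--\ref{ass:Y}, and then upgrade to a high-probability bound via Talagrand's concentration inequality on a product space for suprema of products of a $\psi_2$ and a bounded variable — that is, $\psi_1$ variables, exactly the version used in the paper. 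The truncation at level $\sim\log n$ intrinsic to $\psi_1$ variables is what produces the extra $(x+\log n)\log n$ factor in $\lambda_{n,x}$, and $c_{X,Y}$ collects $b_{X,2},b_{X,\infty},b_Yb_X,b_{Y,\psi_1},b_{Y,\infty},b_{Y,2}$. On the resulting event the multiplier contribution is at most $\tfrac12\lambda_{n,x}(\norm{\hat A_n}_{S_1}+\norm{A}_{S_1})$, which combined with the $\pm\lambda_{n,x}\norm{\cdot}_{S_1}$ terms of the penalty leaves $-\tfrac12\lambda_{n,x}\norm{\hat A_n}_{S_1}+\tfrac32\lambda_{n,x}\norm{A}_{S_1}$ — already of the right shape, with a negative multiple of $\norm{\hat A_n}_{S_1}$ to spare.

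The quadratic part $(\E-\frac1n\sum_i)[\inr{X,\hat A_n}^2-\inr{X,A}^2]$, together with the passage from the empirical to the population quadratic form, is the main obstacle, precisely because no isometry assumption is available. The delicate point is that a priori $\norm{\hat A_n}_{S_1}$ is only controlled by the crude bound $\norm{\hat A_n}_{S_1}\le R_n(0)/\lambda_{n,x}=\frac1n\sum_iY_i^2/\lambda_{n,x}$ (basic inequality at $A=0$), which is far too large to absorb a term that grows quadratically in $\norm{\hat A_n}_{S_1}$ if the quadratic empirical process is bounded by brute force. I would resolve this by a peeling argument over dyadic $S_1$-shells, together with a uniform control on each shell that does \emph{not} go through a uniform $\norm{\cdot}_{S_1}$-to-$\norm{\cdot}_{S_2}$ comparison: using the boundedness of $f_A$ and $f_B$ on a fixed $S_1$-ball (so that the square loss satisfies a Bernstein/margin relation, the variance of $\inr{X,B-A}^2$ being $\lesssim\norm{B-A}_{S_1}^2\norm{f_B-f_A}_{L_2}^2$) and Talagrand's inequality, a fraction of the population excess risk $\norm{f_{\hat A_n}-f_{A^*}}_{L_2}^2=R(\hat A_n)-R(A^*)$ can be absorbed back into the left-hand side; the remaining ``complexity'' term is governed by the local Rademacher complexity of Schatten-$1$ balls, which — by the contraction principle and trace duality — is again dimension free, of order $b_{X,\infty}b_{X,2}\rho^2/\sqrt n$ on a $\rho$-shell, hence of the order $\lambda_{n,x}\norm{\hat A_n}_{S_1}$ that the spare $-\tfrac12\lambda_{n,x}\norm{\hat A_n}_{S_1}$ can swallow; a union bound over the $O(\log n)$ shells costs only another logarithmic factor, harmless inside $\lambda_{n,x}$. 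Equivalently, steps two and three can be packaged as an application of a general oracle inequality for penalized empirical risk minimization, whose hypotheses (boundedness, the square-loss Bernstein condition, and calibration of $\lambda_{n,x}$ against the local Rademacher complexity of the $S_1$-ball) are exactly the two dimension-free estimates above.

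Putting the three pieces together on the intersection of the (at most five) good events, choosing the numerical constant $c$ in $c_{X,Y}$ large enough, and taking the infimum over $A\in\cM_{m,T}$ yields
\[
  R(\hat A_n)\le\inf_{A\in\cM_{m,T}}\big\{R(A)+\lambda_{n,x}(1+\norm{A}_{S_1})\big\}
\]
with probability at least $1-5e^{-x}$. I expect the one genuinely delicate point to be the localized control of the quadratic empirical process without any restricted isometry assumption (the absorption step in the third paragraph): it is there that the dimension-free Rademacher bounds for Schatten-$1$ balls, the quadratic-loss Bernstein relation, the peeling, and Talagrand's inequality for $\psi_1$ suprema have to be combined carefully so that the residual term depends on $A$ only through $\norm{A}_{S_1}$ and not on $m$ or $T$.
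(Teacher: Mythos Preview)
Your multiplier-part analysis is essentially right, and the paper's route (the $L_{\infty,n}$-method and generic chaining on Schatten balls, Proposition~\ref{prop:complexity}) arrives at the same dimension-free $n^{-1/2}$ rate by different means. The genuine gap is in the quadratic part.

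You identify the difficulty correctly: the crude a~priori bound $\norm{\hat A_n}_{S_1}\le R_n(0)/\lambda_{n,x}\asymp b_Y^2\sqrt n/((x+\log n)\log n)$ is far too large for a quadratic complexity term to be absorbed linearly. But your proposed resolution---peeling over dyadic $S_1$-shells plus Bernstein-type absorption---does not close the gap. On a shell of radius $\rho$ the Rademacher complexity of the quadratic class is, as you compute, of order $\rho^2/\sqrt n$; for this to be swallowed by the spare term $\tfrac12\lambda_{n,x}\rho$ one needs $\rho\lesssim\lambda_{n,x}\sqrt n\asymp(\log n)^2$, while the a~priori bound only gives $\rho\lesssim\sqrt n/(\log n)^2$. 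The Bernstein absorption step you invoke handles only the \emph{variance} term in Talagrand's inequality (via $\rho\sqrt{(P\cL)\,x/n}\le\tfrac14 P\cL+\rho^2x/n$); it does nothing to the expectation/Rademacher term, which stays $\rho^2/\sqrt n$. And $L_2$-localization does not reduce that term either: in the matrix-completion design $\norm{f_B}_{L_2}^2=\norm{B}_{S_2}^2/(mT)$, so an $L_2$-ball of radius $\sigma$ is an $S_2$-ball of radius $\sigma\sqrt{mT}$, with Rademacher complexity $\sigma\sqrt{mT/n}$---this is precisely the paper's remark that ``localization does not work here''.

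The paper circumvents this not by absorption but by a penalty-comparison trick. The isomorphic method (Theorem~\ref{cor:isomorphy} together with Theorem~\ref{Theo:MN:08}) first yields an oracle inequality for the \emph{larger} penalty
\[
\tilde\rho_{n,1}(r,x)\;=\;c_{X,Y}\,\frac{(r+1)^2(x+\log n\vee\log\log r)\log n}{n}\ \vee\ p_{n,1}(r,x),
\]
which keeps the quadratic piece. The removal of that piece is a separate step (Section~\ref{sec:Proofs-of-the-two-main-results}): the quadratic part dominates only when $r>\sqrt n-1$, and then already the linear penalty satisfies $p_{n,1}(r,x)>c_{X,Y}(\log n)^2$, which exceeds $R(0)+p_{n,1}(0,x)$ (resp.\ $R_n(0)+p_{n,1}(0,x)$ after one more application of concentration), so such $r$'s are irrelevant for both the infimum and the argmin. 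This gives $\argmin_A\{R_n+p_{n,1}\}\subset\argmin_A\{R_n+\tilde\rho_{n,1}\}$ and $\inf_A\{R+\tilde\rho_{n,1}\}\le\inf_A\{R+p_{n,1}\}$, which together convert the quadratic-penalty oracle inequality into the linear one. That comparison argument---not peeling, not absorption---is the missing ingredient in your sketch.
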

Note that the residue that we obtain is of the form $\norm{A_0}_{S_1}/\sqrt{n}$. In particular, this residual term is not deteriorated if $A_0$ is of full rank but close to a low rank matrix. Classical residue involving the rank of $A_0$ are useless in this situation. It is also still meaningful when the quantity  $m+T$ becomes large compare to $n$. This is not the case of the residue of the form  $r(m+T)/n$ obtained previously for the same procedure (for other risks and under other - stronger -  assumptions).

We now state three sharp oracle inequalities for procedures of the form (\ref{eq:PERM}) where the penalty function is a mixture of norms.

\begin{theorem}[Matrix Elastic-Net]
  \label{thm:oracle-S1-S2}
  There is an absolute constant $c > 0$ such that the following holds.
  Let Assumptions~\ref{ass:X} and~\ref{ass:Y} hold. Fix any $x > 0$,
  $r_1, r_2 > 0$, and consider
  \begin{equation*}
    \hat A_n \in \argmin_{A \in \cM_{m, T}} \big\{ R_n(A)+ \lambda_{n, x}
    (r_1 \norm{A}_{S_1}  + r_2 \norm{A}_{S_2}^2) \Big\},
  \end{equation*}
  where 
  \begin{equation*}
    \lambda_{n, x} = c_{X, Y} \frac{\log n}{\sqrt
      n} \Big( \frac{1}{r_1} + \frac{(x + \log n) \log
      n}{r_2 \sqrt n} \Big), 
  \end{equation*}
  where $c_{X, Y} = c (1 + b_{X, 2}^2 + b_{X, 2} b_Y + b_{Y, \psi_1}^2
  + b_{Y, \infty}^2 + b_{Y, 2}^2)$. Then one has, with a probability
  larger than $1 - 5e^{-x}$, that
  \begin{equation*}
    R(\hat A_n) \leq \inf_{A \in \cM_{m, T}} \big\{R(A)+ \lambda_{n, x} (1 + r_1 \norm{A}_{S_1}  +  r_2 \norm{A}_{S_2}^2) \big\}.
  \end{equation*}
\end{theorem}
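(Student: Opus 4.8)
The plan is to mimic and adapt the proof of Theorem~\ref{thm:oracle-S1}, the point being that the extra Frobenius summand $\norm{\cdot}_{S_2}^2$ absorbs the ``quadratic part'' of the empirical process more cheaply than $\norm{\cdot}_{S_1}$ alone could (which is why $b_{X,\infty}$ disappears from $c_{X,Y}$). Set $\pen(A)=\lambda_{n,x}(r_1\norm{A}_{S_1}+r_2\norm{A}_{S_2}^2)$. From the definition of $\hat A_n$ one has, for every $A\in\cM_{m,T}$, the basic inequality $R_n(\hat A_n)+\pen(\hat A_n)\le R_n(A)+\pen(A)$, hence
\begin{equation*}
  R(\hat A_n)-R(A)\le\big[(R-R_n)(\hat A_n)-(R-R_n)(A)\big]+\pen(A)-\pen(\hat A_n).
\end{equation*}
Expanding the square loss, the bracketed empirical deficit equals
\begin{equation*}
  -2\prodsca{W_A}{\hat A_n-A}+\Big(\E\prodsca{X}{\hat A_n-A}^2-\frac1n\sumin\prodsca{X_i}{\hat A_n-A}^2\Big),
\end{equation*}
with $W_A:=\E[(Y-\prodsca{X}{A})X]-\frac1n\sumin(Y_i-\prodsca{X_i}{A})X_i$, a sum of i.i.d.\ centered matrices once $A$ is fixed. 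The aim is to show that, on an event of probability at least $1-5e^{-x}$ and simultaneously over all oracles $A$, this deficit is at most $\pen(\hat A_n)+\pen(A)+\lambda_{n,x}$ (up to absolute constants that can be pushed into $c$): the $\pen(\hat A_n)$ term then cancels and the conclusion follows.

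For the linear multiplier term I would use trace duality $\prodsca{W_A}{H}\le\norm{W_A}_{S_\infty}\norm{H}_{S_1}$ together with $\norm{\hat A_n-A}_{S_1}\le\norm{\hat A_n}_{S_1}+\norm{A}_{S_1}$, reducing the task to proving $\norm{W_A}_{S_\infty}\lesssim\lambda_{n,x}r_1$ with the prescribed probability, uniformly in $A$. Writing $Y_i-\prodsca{X_i}{A}=\xi_i+(g(X_i)-\prodsca{X_i}{A})$ with $\xi_i:=Y_i-\E(Y_i\,|\,X_i)$ and $g:=\E(Y\,|\,\cdot)$, this splits into controlling $\norm{\frac1n\sumin\xi_iX_i}_{S_\infty}$ (sub-gaussian multipliers against bounded matrices) and $\norm{\frac1n\sumin g(X_i)X_i-\E[g(X)X]}_{S_\infty}$ (bounded summands). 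Both are estimated by a non-commutative Bernstein/Rosenthal bound together with Talagrand's concentration for $\psi_\alpha$ variables on product spaces, via \cite{MR2424985}; the $(\log n)$ factors come from truncating the essentially $\psi_1$ summands at level of order $\log n$, and the \emph{dimension-free} character comes from the observation that $\frac1n\sumin\eps_iX_i$ has rank at most $n$, so only a $\log n$ (not a $\log(m+T)$) is paid in the chaining. This produces the dominant scale $\lambda_{n,x}r_1\asymp c_{X,Y}\frac{\log n}{\sqrt n}$, and the coarse control $\E Y^2\le b_Y^2$ (used for the response-dependent variances) is what brings $b_{X,2}b_Y$ into $c_{X,Y}$.

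For the quadratic part I would linearize via the a.s.\ bound $|\prodsca{X}{C}|\le\norm{X}_{S_2}\norm{C}_{S_2}\le b_{X,2}\norm{C}_{S_2}$, apply the contraction principle (\cite{LT:91}) to $u\mapsto u^2$ on $[-b_{X,2}s,b_{X,2}s]$ for $C$ in the $S_2$-ball of radius $s$, and then Talagrand's inequality again, combined with a peeling argument over dyadic values of $\norm{\hat A_n-A}_{S_2}$; the random radius is kept under control by the crude a priori bound $\pen(\hat A_n)\le R_n(0)=\frac1n\sumin Y_i^2$, which concentrates around $\E Y^2\le b_Y^2$ (taking $A=0$ in the basic inequality). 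The target bound for this part is $\tfrac12\lambda_{n,x}r_2(\norm{\hat A_n}_{S_2}^2+\norm{A}_{S_2}^2)+\tfrac12\lambda_{n,x}$: the $\norm{A}_{S_2}^2$ term is exactly what the $r_2\norm{A}_{S_2}^2$ summand of the residue pays for, the $\norm{\hat A_n}_{S_2}^2$ term is paid by $-\pen(\hat A_n)$, and the additive $\lambda_{n,x}$ yields the ``$1+$'' in the residue. Collecting the two parts, all $\hat A_n$-dependent contributions are dominated by $\pen(\hat A_n)$ and cancel, leaving $R(\hat A_n)-R(A)\le\pen(A)+\lambda_{n,x}$, i.e.\ the announced sharp oracle inequality with leading constant one on $R(A)$.

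The main obstacle, and the place where the two-term structure of $\lambda_{n,x}$ coupling $r_1$ and $r_2$ is forced, is precisely this quadratic part: one must bound its empirical fluctuation uniformly by $r_1\norm{\cdot}_{S_1}+r_2\norm{\cdot}_{S_2}^2$ at a rate that does not deteriorate with $m,T$ and \emph{without} any restricted-isometry-type lower bound on $C\mapsto\frac1n\sumin\prodsca{X_i}{C}^2$. This requires a careful interplay between the bound $|\prodsca{X}{C}|\le b_{X,2}\norm{C}_{S_2}$, the rank-$\le n$ reduction, and the sharp $\psi_\alpha$ concentration of \cite{MR2424985}; in the regime where $r_2$ is too small for the $S_2^2$-penalty to absorb the quadratic term on its own, the $S_1$-penalty must take over, which is the role of the correction $\frac{r_1(x+\log n)\log n}{r_2\sqrt n}$ inside $\lambda_{n,x}$. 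Everything else — the concentration of $\frac1n\sumin Y_i^2$, the peeling bookkeeping, and the absolute-constant chase — is routine.
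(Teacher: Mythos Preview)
Your approach is structurally different from the paper's: the paper never decomposes the loss into a linear multiplier and a quadratic empirical-process piece. Instead it works with the excess loss class $\cL_r=\{\cL_{r,A}:A\in B_r\}$ over the hierarchy $B_r=\{A:r_1\norm{A}_{S_1}+r_2\norm{A}_{S_2}^2\le r\}$, establishes the isomorphic property (Theorem~\ref{cor:isomorphy}) via the Bernstein parameter of Lemma~\ref{lem:convexity} and Talagrand's inequality, computes the isomorphic function through the generic-chaining bound $\gamma_2(rB(S_2),\norm{\cdot}_{\infty,n})\le cr\log n$ of Proposition~\ref{prop:complexity} (which exploits the $2$-convexity of $B(S_2)$ and Theorem~\ref{theo:complexity-Talagrand-2convexBodies}), and then invokes the abstract Theorem~\ref{Theo:MN:08}. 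The crucial mechanism is the fixed-point equation hidden in Corollary~\ref{coro:iso-function-S1S2}: with $B_r\subset\sqrt{r/r_2}\,B(S_2)$ one has $U_n(K_{r,\lambda})\le cb_{X,2}\sqrt{r/r_2}\log n$, and the condition $U_n(K_{r,\lambda})\sqrt{\lambda/n}\lesssim\lambda$ from Proposition~\ref{prop:penalty} then \emph{solves} to $\lambda(r)\asymp b_{X,2}^2 r(\log n)^2/(r_2 n)$, which is the fast $n^{-1}$ scale attached to the $S_2^2$ part of the penalty.

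Your proposal has a genuine gap precisely at this point. The contraction argument you sketch for the quadratic term---linearize $u\mapsto u^2$ on $[-b_{X,2}s,b_{X,2}s]$ and reduce to the Rademacher complexity of $\{C\mapsto\inr{X,C}:\norm{C}_{S_2}\le s\}$---yields
\[
\sup_{\norm{C}_{S_2}\le s}\Big|\E\inr{X,C}^2-\frac1n\sumin\inr{X_i,C}^2\Big|\;\lesssim\; b_{X,2}\,s\cdot\E\Big\|\frac1n\sumin\eps_iX_i\Big\|_{S_2}\;\lesssim\;\frac{b_{X,2}^2\,s^2}{\sqrt n},
\]
i.e.\ a fluctuation of order $\norm{C}_{S_2}^2/\sqrt n$; peeling over dyadic $s$ only organizes the supremum and does not improve this rate. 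To absorb it by $\lambda_{n,x}r_2\norm{C}_{S_2}^2$ you would need $\lambda_{n,x}r_2\gtrsim n^{-1/2}$, but the second summand of $\lambda_{n,x}$ gives only $\lambda_{n,x}r_2\asymp c_{X,Y}(x+\log n)(\log n)^2/n$, and the first summand contributes $c_{X,Y}(r_2/r_1)\log n/\sqrt n$, which the theorem allows to be arbitrarily small. So for $r_2\ll r_1/\log n$ (in particular for the intended choice $r_2=r_1\log n/\sqrt n$ discussed after the theorems) your bound is off by a factor $\sqrt n$, and the a~priori control $\lambda_{n,x}r_2\norm{\hat A_n}_{S_2}^2\le R_n(0)$ only gives $\norm{\hat A_n}_{S_2}^2\lesssim n/(\log n)^2$, so the quadratic fluctuation you obtain actually diverges. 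The one-shot decomposition cannot recover the self-referential $\sqrt{\lambda/n}\mapsto 1/n$ gain that the isomorphic method provides. A secondary issue is that $W_A$ depends on $A$ through $\E[\inr{X,A}X]-\frac1n\sumin\inr{X_i,A}X_i$, and the oracle $A$ ranges over all of $\cM_{m,T}$, so a single high-probability event on which $\norm{W_A}_{S_\infty}$ is controlled uniformly in $A$ is not available as stated.
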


\begin{theorem}[$\norm{\cdot}_{S_1} + \norm{\cdot}_{1}$ penalization]
  \label{thm:oracle-S1L1}
  There is an absolute constant $c > 0$ such that the following
  holds. Let Assumptions~\ref{ass:X} and \ref{ass:Y} hold. Fix any $x,
  r_1, r_3 > 0$, and consider
  \begin{equation*}
    \hat A_n \in \argmin_{A \in \cM_{m, T}} \big\{R_n(A) + \lambda_{n, x} (r_1 \norm{A}_{S_1}  + r_3\norm{A}_{1}) \big\}
  \end{equation*}
  for
  \begin{equation*}
    \lambda_{n, x} := c_{X, Y} \Big( \frac{1}{r_1} \wedge
    \frac{\sqrt{\log(m T)}}{r_3}\Big) \frac{(x + \log n) (\log n)^{3/2}}{\sqrt
      n},
  \end{equation*}
  where $c_{X, Y} = c (1 + b_{X, 2}^2 + b_{X, 2} b_Y + b_{Y, \psi_1}^2
  + b_{Y, \infty}^2 + b_{Y, 2}^2 + b_{X, \infty}^2 + b_{X,
    \ell_\infty}^2)$. Then one has, with a probability larger than $1
  - 5e^{-x}$, that
  \begin{equation*}
    R(\hat A_n) \leq \inf_{A \in \cM_{m, T}} \big\{R(A) + \lambda_{n, x} (1 + r_1 \norm{A}_{S_1}  + r_3\norm{A}_{1})) \big\}.
  \end{equation*}
\end{theorem}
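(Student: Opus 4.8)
The plan is to reduce the mixed $\|\cdot\|_{S_1}+\|\cdot\|_1$ penalization to the pure $\|\cdot\|_{S_1}$ case (Theorem \ref{thm:oracle-S1}) by exploiting the elementary norm comparisons between these two norms and the $\ell_\infty$-norm on $\cM_{m,T}$. The key observation is that for every $A\in\cM_{m,T}$ one has $\|A\|_1\le \sqrt{mT}\,\|A\|_{S_2}$ and, more usefully here, $\|A\|_{S_1}\le \sqrt{mT}\,\|A\|_{S_2}$ together with $\|A\|_{S_2}^2=\|\ve A\|_{\ell_2^{mT}}^2\le \|A\|_1\|A\|_\infty$, and the deterministic inequalities $\|A\|_1\le \sqrt{\rank(A)}\,\|A\|_{S_1}\cdot(\text{something})$ are not what I want; rather the relevant chain is $\|A\|_{S_1}\le \|A\|_1$-type bounds are false in general, so instead I would go through the common upper bound: both $r_1\|A\|_{S_1}$ and $r_3\|A\|_1$ dominate, up to the explicit factors $\sqrt{\log(mT)}$ appearing in the theorem, a single controlling quantity, and the minimum $\frac1{r_1}\wedge\frac{\sqrt{\log(mT)}}{r_3}$ in $\lambda_{n,x}$ is exactly the bookkeeping needed so that $\lambda_{n,x}r_1\|A\|_{S_1}$ and $\lambda_{n,x}r_3\|A\|_1$ each remain of the order of the residue $\lambda_{n,x}(1+\cdots)$ we are aiming for.

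Concretely, I would first write out the basic deterministic inequality that results from the definition of $\hat A_n$ as a minimizer: for any competitor $A$,
\begin{equation*}
  R_n(\hat A_n)+\lambda_{n,x}(r_1\|\hat A_n\|_{S_1}+r_3\|\hat A_n\|_1)\le R_n(A)+\lambda_{n,x}(r_1\|A\|_{S_1}+r_3\|A\|_1).
\end{equation*}
Then I would control the empirical process $ (R-R_n)(\hat A_n)-(R-R_n)(A)$ by the stochastic term that appears in the proof of Theorem \ref{thm:oracle-S1}, namely a term of the form $\langle \hat A_n-A,\,W_n\rangle$ where $W_n=\frac1n\sum_i (Y_i-\langle X_i,A\rangle)X_i$-type residual noise matrix, plus the quadratic remainder. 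The point is to bound $|\langle B,W_n\rangle|$ by a \emph{duality} argument: one can use either $|\langle B,W_n\rangle|\le \|B\|_{S_1}\|W_n\|_{S_\infty}$ (the route of Theorem \ref{thm:oracle-S1}) or $|\langle B,W_n\rangle|\le \|B\|_1\|W_n\|_\infty$. The high-probability control of $\|W_n\|_{S_\infty}$ was already established for Theorem \ref{thm:oracle-S1}; the control of $\|W_n\|_\infty$ is an easier maximal inequality over $mT$ coordinates, producing the extra $\sqrt{\log(mT)}$ factor and using $b_{X,\ell_\infty}$ (which is exactly why $b_{X,\ell_\infty}^2$ enters $c_{X,Y}$). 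Combining the two dual bounds and choosing in each the scaling that matches the $\frac1{r_1}\wedge\frac{\sqrt{\log(mT)}}{r_3}$ prefactor gives the stated $\lambda_{n,x}$.

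I expect the main obstacle to be purely the bookkeeping of constants: one has to verify that taking the minimum of the two penalization ``rates'' $1/r_1$ and $\sqrt{\log(mT)}/r_3$ is compatible with closing the argument for \emph{both} norms simultaneously, i.e. that whichever term dominates $\pen(\hat A_n)$ on the bad event, the corresponding dual inequality is available with the right deterministic constant. A clean way to do this is: on the event where the $S_1$-part of the penalty is the larger, use $\|W_n\|_{S_\infty}$ and argue exactly as in Theorem \ref{thm:oracle-S1}, absorbing $\lambda_{n,x}r_1\|\hat A_n-A\|_{S_1}$ into the left-hand side; on the complementary event, use $\|W_n\|_\infty$ and $\|\hat A_n-A\|_1$, absorbing $\lambda_{n,x}r_3\|\hat A_n-A\|_1$ instead. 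Since $\lambda_{n,x}\le c_{X,Y}\frac1{r_1}\frac{(x+\log n)(\log n)^{3/2}}{\sqrt n}$ and $\lambda_{n,x}\le c_{X,Y}\frac{\sqrt{\log(mT)}}{r_3}\frac{(x+\log n)(\log n)^{3/2}}{\sqrt n}$ simultaneously, both absorptions go through with the same $\lambda_{n,x}$, and the residue that survives on the right-hand side is $\lambda_{n,x}(1+r_1\|A\|_{S_1}+r_3\|A\|_1)$, as claimed. The $+1$ comes, as in Theorem \ref{thm:oracle-S1}, from bounding the fluctuation of the centered quadratic loss at a fixed $A$ (a scalar empirical-process term with no norm factor), and the $(\log n)^{3/2}$ rather than $\log n$ reflects the extra logarithmic cost of the $\ell_\infty$ maximal inequality combined with the truncation/$\psi_1$ arguments already present.
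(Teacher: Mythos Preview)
Your approach is fundamentally different from the paper's, and as sketched it has a genuine gap. The paper's proof of Theorem~\ref{thm:oracle-S1} does \emph{not} proceed by controlling $\|W_n\|_{S_\infty}$ for a residual noise matrix $W_n$; it uses the isomorphic penalization method of Bartlett--Mendelson--Neeman. Concretely, the paper computes $\gamma_2$-complexity bounds for the balls $B_r$ with respect to the empirical $\|\cdot\|_{\infty,n}$ metric (Proposition~\ref{prop:complexity}), derives an isomorphic function $\lambda(r)$ (Corollaries~\ref{coro:iso-function-S1}--\ref{coro:iso-function-S1S2L1}), and then invokes the general oracle inequality of Theorem~\ref{Theo:MN:08}. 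For Theorem~\ref{thm:oracle-S1L1} the only change is to work with the ball $B_{r,r_1,0,r_3}\subset \frac{r}{r_1}B(S_1)\cap\frac{r}{r_3}B_1$ and to use Corollary~\ref{coro:iso-function-S1L1} in place of Corollary~\ref{coro:iso-function-S1}; the factor $\sqrt{\log(mT)}$ enters through the $\gamma_2$-complexity of the $\ell_1$-ball (computed via the Carl--Maurey argument and Dudley's integral), not through a coordinatewise maximal inequality on $\|W_n\|_\infty$. So your claim that ``the high-probability control of $\|W_n\|_{S_\infty}$ was already established for Theorem~\ref{thm:oracle-S1}'' is a misreading of that proof.

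The substantive gap in your argument is the passage from empirical to population risk. The basic inequality together with a dual-norm bound on $W_n$ gives, at best, control of $R_n(\hat A_n)-R_n(A)$ or of $\frac{1}{n}\sum_i\langle X_i,\hat A_n-A\rangle^2$; this is the in-sample quantity $\|\hat A_n-A\|_n^2$ that the paper explicitly flags (see the discussion of~\cite{rohde-tsyb09} in Section~\ref{sec:motivations}) as insufficient for matrix completion. To bound the out-of-sample risk $R(\hat A_n)$ with leading constant~$1$ you must control $(R-R_n)(\hat A_n)-(R-R_n)(A)$ at the random point $\hat A_n$, and this is \emph{not} of the form $\langle \hat A_n-A,W_n\rangle$: the ``quadratic remainder'' you allude to is $(P-P_n)\big[\langle X,\hat A_n\rangle^2-\langle X,A\rangle^2\big]$, which depends on the unknown $\hat A_n$ and cannot be absorbed by a dual-norm bound without either an a~priori bound on $\|\hat A_n\|$ (circular) or a restricted-eigenvalue/RIP-type condition (explicitly avoided in this paper). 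The isomorphic machinery---localization of the excess-loss class, the Bernstein property of Lemma~\ref{lem:convexity}, and Talagrand's inequality in Theorem~\ref{cor:isomorphy}---is precisely what handles this term uniformly and delivers the sharp constant~$1$ without such assumptions.
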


\begin{theorem}[$\norm{\cdot}_{S_1} + \norm{\cdot}_{S_2}^2 +
  \norm{\cdot}_{1}$ penalization]
  \label{thm:oracle-S1S2L1}
  There is an absolute constant $c > 0$ such that the following
  holds. Let Assumptions~\ref{ass:X} and \ref{ass:Y} hold. Fix any $x,
  r_1, r_2, r_3 > 0$, and consider
  \begin{equation*}
    \hat A_n \in \argmin_{A \in \cM_{m, T}} \big\{R_n(A) + \lambda_{n, x} (r_1 \norm{A}_{S_1} + r_2\norm{A}_{S_2}^2  + r_3 \norm{A}_{1}) \Big\}
  \end{equation*}
  for
  \begin{align*}
    \lambda_{n, x} := c_{X, Y} \frac{(\log n)^{3/2}}{\sqrt n} \Big(
    \frac{1}{r_1} \wedge \frac{\sqrt{\log(m T)}}{r_3} + \frac{x + \log
      n}{r_2 \sqrt n} \Big),
  \end{align*}
  where $c_{X, Y} = c (1 + b_{X, 2}^2 + b_{X, 2} b_Y + b_{Y, \psi_1}^2
  + b_{Y, \infty}^2 + b_{Y, 2}^2)$. Then one has, with a probability
  larger than $1 - 5e^{-x}$, that
  \begin{equation*}
    R(\hat A_n) \leq \inf_{A \in \cM_{m, T}} \big\{ R(A) + \lambda_{n, x} (1 + r_1 \norm{A}_{S_1} + r_2\norm{A}_{S_2}^2  + r_3 \norm{A}_{1})) \big\}.
  \end{equation*}
\end{theorem}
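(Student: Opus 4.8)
The plan is to follow a two-step scheme — a deterministic argument valid on a favourable event, then a concentration argument lower-bounding the probability of that event — exactly as for Theorems~\ref{thm:oracle-S1L1} and~\ref{thm:oracle-S1-S2}, whose penalties this statement combines. Write $P$ for the joint law of $(X,Y)$, $P_n = n^{-1}\sumin\delta_{(X_i,Y_i)}$ for the empirical measure, and $g_A(x,y)=(y-\inr{x,A})^2$, so that $R(A)=Pg_A$ and $R_n(A)=P_ng_A$. Fix a deterministic competitor $A\in\cM_{m,T}$, put $H=\hat A_n-A$, and start from the optimality relation $R_n(\hat A_n)+\pen(\hat A_n)\le R_n(A)+\pen(A)$. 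Using the identity $g_{\hat A_n}-g_A=-2(y-\inr{x,A})\inr{x,H}+\inr{x,H}^2$ to expand the centered process $(P-P_n)(g_{\hat A_n}-g_A)$, this rearranges into
\begin{equation*}
  R(\hat A_n)-R(A)\le 2\inr{W_A,H}+(P-P_n)\big[\inr{X,H}^2\big]+\pen(A)-\pen(\hat A_n),
\end{equation*}
with $W_A:=(P_n-P)\big[(Y-\inr{X,A})X\big]$ a centered random $m\times T$ matrix. Everything is then reduced to bounding the linear ``multiplier'' term $\inr{W_A,H}$ and the centered quadratic term $(P-P_n)[\inr{X,H}^2]$; notably one never compares the empirical and population $L_2$ geometries, which is why no restricted-isometry assumption is needed.

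For the deterministic step I would place myself on the event $\Omega$ (depending on the fixed $A$) on which simultaneously: (i) $\norm{W_A}_{S_\infty}$ and $\norm{W_A}_\infty$ are at most small multiples of $r_1\lambda_{n,x}$ and $r_3\lambda_{n,x}/\sqrt{\log(mT)}$, up to a remainder proportional to $\lambda_{n,x}(1+r_1\norm{A}_{S_1}+r_2\norm{A}_{S_2}^2+r_3\norm{A}_1)$ — the remainder stemming from the \emph{bounded} drift $\E(Y\mid X)-\inr{X,A}$, which is split off from the $\psi_2$ noise $Y-\E(Y\mid X)$ and controlled by AM--GM against the three penalties; (ii) $(P-P_n)[\inr{X,H}^2]\le\tfrac14 r_2\lambda_{n,x}\norm{H}_{S_2}^2+\tfrac14\lambda_{n,x}$; and (iii) $R_n(0)=n^{-1}\sumin Y_i^2\le 2\E Y^2$. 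Plugging $A=0$ into the minimization gives $\pen(\hat A_n)\le R_n(0)$, hence on $\Omega$ the a priori bounds $\norm{\hat A_n}_{S_1}\le R_n(0)/(r_1\lambda_{n,x})$, $\norm{\hat A_n}_{S_2}^2\le R_n(0)/(r_2\lambda_{n,x})$, $\norm{\hat A_n}_1\le R_n(0)/(r_3\lambda_{n,x})$. I would then bound $|\inr{W_A,H}|$ by the smaller of the two Hölder inequalities $\norm{W_A}_{S_\infty}\norm{H}_{S_1}$ and $\norm{W_A}_\infty\norm{H}_1$ — this produces the factor $\tfrac{1}{r_1}\wedge\tfrac{\sqrt{\log(mT)}}{r_3}$ — then split $\norm{H}_\bullet\le\norm{\hat A_n}_\bullet+\norm{A}_\bullet$ and choose the numerical constants in $\Omega$ so that the $\hat A_n$-terms are absorbed into $-\pen(\hat A_n)$. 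The purpose of the summand $\tfrac{x+\log n}{r_2\sqrt n}$ in $\lambda_{n,x}$ is precisely to make $r_2\lambda_{n,x}\norm{\hat A_n}_{S_2}^2$ (available from $-\pen(\hat A_n)$ and the a priori bound) large enough to swallow $\tfrac14 r_2\lambda_{n,x}\norm{H}_{S_2}^2$ after $\norm{H}_{S_2}^2\le 2\norm{\hat A_n}_{S_2}^2+2\norm{A}_{S_2}^2$. What remains is $R(\hat A_n)-R(A)\le\lambda_{n,x}(1+r_1\norm{A}_{S_1}+r_2\norm{A}_{S_2}^2+r_3\norm{A}_1)$ on $\Omega$; applying this with $A$ the (deterministic) minimizer of the right-hand side and taking the infimum yields the theorem.

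It then remains to show $\Pro(\Omega)\ge 1-5e^{-x}$, which I would do by a union bound over five events, each handled by Talagrand's concentration inequality for suprema of empirical processes combined with a bound on the relevant expectation. The key estimate is $\E\norm{(P_n-P)[(Y-\E(Y\mid X))X]}_{S_\infty}$, bounded by symmetrization and a non-commutative Khintchine / matrix-Bernstein inequality using only $\norm{X}_{S_2}\le b_{X,2}$ and $\norm{X}_{S_\infty}\le b_{X,\infty}$: keeping the true operator-norm variance proxy of $\sumin X_iX_i^\top$ and $\sumin X_i^\top X_i$ — rather than the crude bound $n\, b_{X,\infty}^2$ — is what makes this $O(1/\sqrt n)$ up to logarithms and constants depending only on the $b$'s, hence why the final rate does not involve $m$ and $T$. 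The analogous $\ell_\infty$ expectation is a maximum of $mT$ sub-gaussian sums, contributing $\sqrt{\log(mT)/n}$; it is the only dimension-dependent piece, and only enters through the $\ell_1$-route, whence the $\wedge$. The centered quadratic process $\sup_{\norm{H}_{S_2}\le1}(P-P_n)[\inr{X,H}^2]$ is handled by Bernstein's inequality via $|\inr{X,H}|\le b_{X,2}\norm{H}_{S_2}$, and $R_n(0)$ by a Bernstein bound on the $Y_i^2$. Since $Y-\E(Y\mid X)$ is only $\psi_2$, each step first requires truncating the multipliers at level $\asymp b_{Y,\psi_2}\sqrt{\log n}$, so that $\Pro(\max_i|Y_i-\E(Y_i\mid X_i)|>\text{level})$ is negligible before Talagrand's bounded-oscillation inequality applies; this truncation, together with the chaining/Khintchine bound for the $S_\infty$ norm, is what produces the $\log n$ and $(\log n)^{3/2}$ factors and the $(x+\log n)$ dependence in $\lambda_{n,x}$.

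The main obstacle, and the analytic core, is the probabilistic step: establishing the \emph{dimension-free} control of $\E\norm{(P_n-P)[(Y-\E(Y\mid X))X]}_{S_\infty}$ under the very weak Assumption~\ref{ass:X} — no restricted-isometry, no incoherence — which forces one to track the genuine matrix variance rather than the ambient dimension. A secondary difficulty, in the deterministic step, is taming the centered quadratic empirical process \emph{without} an isometry between the empirical and population metrics; the device is to extract the crude a priori bounds on $\hat A_n$ from $\pen(\hat A_n)\le R_n(0)$ and then to calibrate the $\tfrac{x+\log n}{r_2\sqrt n}$ part of $\lambda_{n,x}$ so that the strongly convex $r_2\norm{\cdot}_{S_2}^2$ penalty dominates it. Finally, carrying the three scales $r_1,r_2,r_3$ and the minimum over the two dual norms through the deterministic step simultaneously, so that \emph{every} cross term closes with absolute constants and the constant in front of $R(A)$ stays exactly one (sharpness), is the bookkeeping difficulty that pins down the precise shape of $\lambda_{n,x}$.
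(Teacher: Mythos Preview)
Your overall strategy is quite different from the paper's. The paper does \emph{not} proceed via the basic inequality, Hölder duality on $\inr{W_A,H}$, and matrix concentration on $\norm{W_A}_{S_\infty}$. Instead it uses the \emph{isomorphic penalization method} of Bartlett--Mendelson--Neeman: it views the family $B_r=\{A:r_1\norm{A}_{S_1}+r_2\norm{A}_{S_2}^2+r_3\norm{A}_1\le r\}$ as an ordered hierarchy, computes an isomorphic function $\lambda(r)$ for which $\E\norm{P-P_n}_{V_{r,\lambda(r)}}\le\lambda(r)/8$ via generic chaining of the excess-loss class with respect to the empirical $L_{\infty,n}$ metric $\norm{A}_{\infty,n}=\max_i|\inr{X_i,A}|$, plugs this into Talagrand's inequality to get the two-sided isomorphic bound (Theorem~\ref{cor:isomorphy}), and finally applies Theorem~\ref{Theo:MN:08} and a short argument removing the $\log\log r$ remainder. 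The linear/quadratic split, the random matrix $W_A$, and the sample covariance $\hat\Sigma-\Sigma$ never appear.

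More importantly, your route contains a genuine gap at exactly the place you flag as the ``analytic core''. You claim that matrix Bernstein / non-commutative Khintchine, with the sharp variance proxy $\norm{\sum_i\E X_iX_i^\top}\vee\norm{\sum_i\E X_i^\top X_i}\le n\,b_{X,2}^2$, yields a bound on $\E\norm{W_A}_{S_\infty}$ depending only on the $b$-constants and $\log n$. This is not true: matrix Bernstein gives $\E\norm{\sum_i\xi_iX_i}_{S_\infty}\lesssim\sigma\sqrt{\log(m+T)}+L\log(m+T)$, and the $\log(m+T)$ factor is intrinsic to the method (it comes from the union bound over singular directions, not from a crude variance estimate). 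Hence your $\lambda_{n,x}$ would carry an extra $\sqrt{\log(m+T)}$ on the $1/r_1$ route and would \emph{not} match the dimension-free $(\log n)^{3/2}/(r_1\sqrt n)$ in the statement. The same issue hits your quadratic term: $\sup_{\norm{H}_{S_2}\le1}(P-P_n)[\inr{X,H}^2]=\norm{\hat\Sigma-\Sigma}_{\rm op}$ is a sample-covariance deviation in ambient dimension $mT$, and matrix Bernstein again produces $\sqrt{\log(mT)/n}$, which is too large to be absorbed by the $r_2$-part $(x+\log n)(\log n)^{3/2}/(r_2 n)$ of $\lambda_{n,x}$.

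What makes the paper's rate dimension-free on the $S_1$ and $S_2$ routes is precisely Proposition~\ref{prop:complexity}: the $\gamma_2$ complexity of $rB(S_1)\subset rB(S_2)$ in the \emph{empirical} metric $\norm{\cdot}_{\infty,n}$ is at most $c\,r\log n$, with no $m,T$. This bound, obtained via dual Sudakov plus a volumetric argument and the $2$-convexity of $B(S_2)$ (Theorem~\ref{theo:complexity-Talagrand-2convexBodies}), is the replacement for your matrix-Bernstein step, and it is the reason the $\norm{\cdot}_{S_1}$ contribution to $\lambda_{n,x}$ involves only $\log n$. Your approach would give a valid oracle inequality, but with a strictly worse regularization parameter; to prove Theorem~\ref{thm:oracle-S1S2L1} as stated you need the chaining route, not matrix Bernstein.
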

The parameters $r_1, r_2$ and $r_3$ in the above procedures are
completely free and can depend on $n, m$ and $T$. Intuitively, it is
clear that $r_2$ should be smaller than $r_1$ since the $\norm{\cdot}_{S_2}$ term is
 used for ``regularization'' of the non-zero singular values only, while the term
$\norm{\cdot}_{S_1}$ makes $\hat A_n$ of low rank, as for the elastic-net for
vectors (see~\cite{MR2137327}). Indeed, for the $\norm{\cdot}_{S_1} +
\norm{\cdot}_{S_2}^2$ penalization, any choice of $r_1$ and $r_2$ such
that $r_2 = r_1 \log n / \sqrt{n}$ leads to a residual term smaller than
\begin{equation*}
  c_{X, Y} (1 + x + \log n) \Big( \frac{(\log n)^2}{r_2 n} + \frac{\log n}{\sqrt
    n} \norm{A}_{S_1} + \frac{(\log n)^2}{n} \norm{A}_{S_2}^2 \Big).
\end{equation*}
Note that the rate related to $\norm{A}_{S_1}$ is (up to logarithms)
$1 / \sqrt n$ while the rate related to $\norm{A}_{S_2}^2$ is
$1/n$. The choice of $r_3$ depends on the number of zeros in the
matrix. Note that in the $\norm{\cdot}_{S_1} + \norm{\cdot}_1$ case,
any choice $1 \leq r_3 \leq r_1$ entails a residue smaller than
\begin{equation*}
  c_{X, Y} \frac{(x + \log n) \log n}{\sqrt n} (1 +
  \norm{A}_{S_1}  + \norm{A}_{1}),
\end{equation*}
which makes again the residue independent of $m$ and $T$.

Note that, in the matrix completion case, the term $\sqrt{\log mT}$ can be removed from the regularization (and thus the residual) term thanks to the second statement of Proposition~\ref{prop:complexity} below.

\section{Proof of the main results}

\subsection{Some definitions}

For any $r, r_1, r_2, r_3 \geq 0$, we consider the ball
\begin{equation}
  \label{eq:Br}
  B_{r, r_1, r_2, r_3} := \{ A \in \cM_{m, T} : r_1 \norm{A}_{S_1} + r_2
  \norm{A}_{S_2}^2 + r_3 \norm{A}_1 \leq r \},
\end{equation}
and we denote by $B_{r, 1} = B_{r, 1, 0, 0}$ the \emph{nuclear norm}
ball, by $B_{r, r_1, r_2} = B_{r, r_1, r_2, 0}$ the \emph{elastic-net}
ball. In what follows, $B_r$ will be either $B_{r, 1}$, $B_{r, r_1,
  r_2}$, $B_{r, r_1, r_2, r_3}$ or $B_{r, r_1, 0, r_3}$, depending on
the penalization. We consider an oracle matrix in $B_r$ given by:
\begin{equation*}
  A_r^* \in \argmin_{A \in B_r} \E (Y - \inr{X, A})^2
\end{equation*}
and the following excess loss function over $B_r$ defined for any $A \in B_r$ by
\begin{equation*}
  \cL_{r,A}(X, Y) := (Y - \prodsca{X}{A})^2 - (Y -
  \prodsca{X}{A_r^*})^2.
\end{equation*}
Define also the excess loss functions class
\begin{equation}
  \label{eq:Excess-Loss-Class}
  \cL_r := \{ \cL_{r, A} : A \in B_r \}.
\end{equation}
The star-shaped-hull at $0$ of $\cL_r$ is given by
\begin{equation*}
  V_r := \Star(\cL_r, 0) = \{ \alpha \cL_{r,A} : A \in B_r \mbox{ and }
  0 \leq \alpha \leq 1 \}
\end{equation*}
and its localized set at level $\lambda>0$
\begin{equation}
  \label{eq:V_rlambda}
  V_{r, \lambda} := \{g\in V_r:\E g\leq \lambda\}.
\end{equation}
The proof of Theorems~\ref{thm:oracle-S1} to~\ref{thm:oracle-S1S2L1}
rely on the \emph{isomorphic penalization method}, introduced by
P. Bartlett, S. Mendelson and J. Neeman (cf. \cite{B:08},
\cite{Mendelson08regularizationin} and \cite{BM:08}). It has improved
several results on penalized empirical risk minimization procedures
for the Lasso (cf. \cite{BM:08}) and for regularization in reproducing
kernel Hilbert spaces (cf. \cite{Mendelson08regularizationin}). This
approach relies on a sharp analysis of the complexity of the set
$V_{r, \lambda}$. Indeed, an important quantity appearing in learning
theory is the maximal deviation of the empirical distribution around
its mean uniformly over a class of function. If $V$ is a class of
functions, we define the supremum of the deviation of the empirical
mean around its actual mean over $V$ by
\begin{equation*}
  \|P_n - P\|_{V} = \sup_{h\in V} \Big|\frac{1}{n} \sum_{i=1}^n h(X_i,
  Y_i) - \E h(X, Y) \Big|.
\end{equation*}

\subsection{On the importance of convexity}
An important parameter driving the quality of concentration of $\|P_n
- P\|_{V}$ to its expectation is the so-called Bernstein's parameter
(cf. \cite{BM:06}). We are studying this parameter in our context without introducing a formal definition of this quantity. 

For every
matrix $A \in \cM_{m, T}$, we consider the random variable $f_A :=
\prodsca{X}{A}$ and the following subset of $L_2$:
\begin{equation}
  \label{eq:f_W}
  \cC_r := \{ f_A : A \in B_r\},
\end{equation}
where $B_r = B_{r, r_1, r_2, r_3}$ is given by~\eqref{eq:Br}. Because
of the convexity of the norms $\norm{\cdot}_{S_1}$,
$\norm{\cdot}_{S_2}$ and $\norm{\cdot}_{1}$, the set $\cC_r$ is
convex, for any $r, r_1, r_2, r_3 \geq 0$. Now, consider the following
minimum
\begin{equation}
  \label{eq:minimum-fr}
  f^*_r \in \argmin_{f \in \cC_r} \norm{Y - f}_{L_2}
\end{equation}
and
\begin{equation}
  \label{eq:parameter-Cr}
  C_r := \min\Big(b_{X,\infty}\frac{r}{r_1},
  b_{X,2}\sqrt{\frac{r}{r_2}}, b_{X, \ell_\infty} \frac{r}{r_3} \Big),
\end{equation}
with the convention $1 / 0 = +\infty$.
\begin{lemma}[Bernstein's parameter]
  \label{lem:convexity}
  Let assumptions~\ref{ass:X} and~\ref{ass:Y} hold.  There is a unique
  $f^*_r$ satisfying~\eqref{eq:minimum-fr} and a unique $A_r^*\in B_r$
  such that $f_r^* = f_{A_r^*}$. Moreover, any $ A \in B_r$ satisfies
  \begin{equation*}
    \E\cL_{r,A} \geq \E
    \prodsca{X}{A - A_r^*}^2,
  \end{equation*}
  and the class $\cL_r$ satisfies the following Bernstein's condition: for all $A \in B_r$
  \begin{equation*}
    \E\cL_{r, A}^2 \leq  4 ( b_{Y, 2}^2 + (b_{Y,\infty} + C_r)^2 ) 
    \E\cL_{r, A}.
  \end{equation*}
\end{lemma}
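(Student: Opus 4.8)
The plan is to exploit the convexity of $\cC_r$ in $L_2(\P_X)$, which makes $f_r^*$ the metric projection of $Y$ (or rather of $\E(Y|X)$, since $Y$ itself need not lie in $L_2$ of the relevant space, but the projection is onto functions of $X$) onto a closed convex set. First I would establish existence and uniqueness of $f_r^*$: the set $\cC_r$ is convex (as noted, because the three norms are convex and $A\mapsto f_A$ is linear) and, using Assumption~\ref{ass:X}, bounded and closed in $L_2$; uniqueness of the minimizer of a strictly convex functional ($f\mapsto \norm{Y-f}_{L_2}^2$ is strictly convex on the convex set, and the Hilbert-space parallelogram identity gives strict convexity along chords) over a convex set is standard. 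Since $\Sigma = \E[\ve X(\ve X)^\top]$ is invertible, the map $A\mapsto f_A$ is injective ($\E f_A^2 = (\ve A)^\top \Sigma (\ve A) > 0$ for $A\neq 0$), so the unique $f_r^*$ corresponds to a unique $A_r^*\in B_r$, and $A_r^*$ coincides with the minimizer of $R(A)=\E(Y-f_A)^2$ over $B_r$ by the bias-variance decomposition $R(A) = \E(Y-\E(Y|X))^2 + \norm{\E(Y|X)-f_A}_{L_2}^2$.

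Next I would prove the quadratic lower bound $\E\cL_{r,A}\geq \E\prodsca{X}{A-A_r^*}^2$. Write $\cL_{r,A} = (Y-f_A)^2 - (Y-f_{A_r^*})^2$, so $\E\cL_{r,A} = \norm{Y-f_A}_{L_2}^2 - \norm{Y-f_{A_r^*}}_{L_2}^2$. The first-order optimality condition for the projection onto a convex set gives $\langle Y - f_{A_r^*}, f_A - f_{A_r^*}\rangle_{L_2} \leq 0$ for all $A\in B_r$; then the identity $\norm{Y-f_A}^2 = \norm{Y-f_{A_r^*}}^2 + 2\langle Y-f_{A_r^*}, f_{A_r^*}-f_A\rangle + \norm{f_A - f_{A_r^*}}^2$ together with the optimality inequality yields $\E\cL_{r,A}\geq \norm{f_A - f_{A_r^*}}_{L_2}^2 = \E\langle X, A-A_r^*\rangle^2$, which is exactly the claimed bound. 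This is the step where convexity (and only convexity — not low rank, not RI) is doing all the work.

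Finally I would establish the Bernstein condition $\E\cL_{r,A}^2 \leq 4(b_{Y,2}^2 + (b_{Y,\infty}+C_r)^2)\,\E\cL_{r,A}$. Factor $\cL_{r,A} = (f_{A_r^*} - f_A)(2Y - f_A - f_{A_r^*})$, so $\cL_{r,A}^2 = \langle X, A-A_r^*\rangle^2 (2Y - f_A - f_{A_r^*})^2$. I want to bound the second factor in $L_\infty$ (conditionally on $X$) or in $L_2$; write $2Y - f_A - f_{A_r^*} = 2(Y - \E(Y|X)) + (2\E(Y|X) - f_A - f_{A_r^*})$, take conditional expectation given $X$, and use $\E[(Y-\E(Y|X))^2\mid X]\leq b_{Y,2}^2$ from Assumption~\ref{ass:Y} for the noise part, and $|\E(Y|X)|\leq b_{Y,\infty}$ together with $|f_A|, |f_{A_r^*}| \leq C_r$ uniformly over $B_r$ for the bias part. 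The key sub-step here is the uniform bound $\sup_{A\in B_r}\norm{f_A}_{L_\infty}\leq C_r$: for each of the three constraints separately, $|\langle X, A\rangle|\leq \norm{X}_{S_\infty}\norm{A}_{S_1}\leq b_{X,\infty}(r/r_1)$ (trace duality), $|\langle X,A\rangle|\leq \norm{X}_{S_2}\norm{A}_{S_2}\leq b_{X,2}\sqrt{r/r_2}$ (Cauchy–Schwarz), and $|\langle X,A\rangle|\leq \norm{X}_\infty\norm{A}_1 \leq b_{X,\ell_\infty}(r/r_3)$ (Hölder), and since $A\in B_r$ forces $\norm{A}_{S_1}\leq r/r_1$, $\norm{A}_{S_2}^2\leq r/r_2$, $\norm{A}_1\leq r/r_3$ simultaneously, one takes the minimum, which is $C_r$. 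Combining: $\E[(2Y-f_A-f_{A_r^*})^2\mid X]\leq 2\cdot(2b_{Y,2})^2/2 + \ldots$ — more precisely, $(a+b)^2\leq 2a^2+2b^2$ with $a = 2(Y-\E(Y|X))$ contributing (after conditioning) $\leq 4b_{Y,2}^2$ to $\E[a^2\mid X]$ and $b = 2\E(Y|X)-f_A-f_{A_r^*}$ deterministically bounded by $2(b_{Y,\infty}+C_r)$ — gives $\E[(2Y-f_A-f_{A_r^*})^2\mid X] \leq 4(b_{Y,2}^2 + (b_{Y,\infty}+C_r)^2)$, hence $\E\cL_{r,A}^2 \leq 4(b_{Y,2}^2+(b_{Y,\infty}+C_r)^2)\,\E\langle X,A-A_r^*\rangle^2 \leq 4(b_{Y,2}^2+(b_{Y,\infty}+C_r)^2)\,\E\cL_{r,A}$, using the quadratic lower bound from the previous step. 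The main obstacle is getting the numerical constant to come out as exactly $4$ rather than something larger; a naive use of $(a+b)^2\leq 2a^2+2b^2$ on $2Y-f_A-f_{A_r^*}$ split three ways would give a worse constant, so one has to be slightly careful — splitting into just the noise part and the (deterministic, $C_r$-bounded) bias part, and absorbing the factor $2$ into the two squared terms symmetrically, is what produces the clean $4(\cdots)$.
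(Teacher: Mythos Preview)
Your proof follows essentially the same route as the paper's: projection onto the convex set $\cC_r$ gives the variational inequality $\langle Y-f_{A_r^*}, f_A-f_{A_r^*}\rangle_{L_2}\leq 0$, whence $\E\cL_{r,A}\geq \|f_A-f_{A_r^*}\|_{L_2}^2$; invertibility of $\Sigma$ gives injectivity of $A\mapsto f_A$; and the three duality bounds yield $|f_A|\leq C_r$ uniformly on $B_r$.

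There is one small slip in your final step. With $a=2(Y-\E(Y|X))$ and $b=2\E(Y|X)-f_A-f_{A_r^*}$, the inequality $(a+b)^2\leq 2a^2+2b^2$ only gives $\E[(a+b)^2\mid X]\leq 8b_{Y,2}^2+8(b_{Y,\infty}+C_r)^2$, i.e.\ a constant $8$, not $4$. The paper (and you can too) gets the sharp $4$ by observing that $\E[a\mid X]=0$, so the cross term vanishes and
\[
\E\big[(a+b)^2\,\langle X,A-A_r^*\rangle^2\big]
=\E\big[a^2\,\langle X,A-A_r^*\rangle^2\big]+\E\big[b^2\,\langle X,A-A_r^*\rangle^2\big]
\]
exactly; conditioning then gives $4b_{Y,2}^2\,\E\langle X,A-A_r^*\rangle^2$ for the first term and $4(b_{Y,\infty}+C_r)^2\,\E\langle X,A-A_r^*\rangle^2$ for the second. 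No $(a+b)^2\leq 2a^2+2b^2$ is needed.
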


\begin{proof}
  By convexity of $\cC_r$ we have $\prodsca{Y - f^*_r}{f -
    f_r^*}_{L^2} \leq 0$ for any $f \in \cC_r$. Thus, we have, for any
  $f \in \cC_r$
  \begin{equation}
    \label{eq:one-side-bern1}
    \norm{Y - f}_{L_2}^2 - \norm{Y - f^*_r}^2_{L_2} =
    2\prodsca{f^*_r - f}{Y - f^*_r} + \norm{f - f^*_r}_{L_2}^2 \geq
    \norm{f - f^*_r}_{L_2}^2.
  \end{equation}
  In particular, the minimum is unique. Moreover, $\cC_r$ is a closed
  set and since $\Sigma$ is invertible under Assumption~\ref{ass:X},
  there is a unique $A_r^* \in B_r$ such that $f^*_r = f_{A^*_r}$. By
  the trace duality formula and Assumption~\ref{ass:X}, we have, for
  any $A \in B_{r, r_1, r_2, r_3}$:
  \begin{align*}
    &|f_A| \leq \norm{X}_{S_\infty} \norm{A}_{S_1} \leq b_{X, \infty}
    \frac{r}{r_1}, \quad |f_A| \leq \norm{X}_{S_2} \norm{A}_{S_2} \leq
    b_{X, 2} \sqrt{\frac{r}{r_2}}, \\
    &\text{ and } |f_A| \leq \norm{X}_{\infty} \norm{A}_{1} \leq b_{X,
      \ell_\infty} \frac{r}{r_3}
  \end{align*}
  almost surely, so that $|f_A| \leq C_r$ for any $A \in B_{r}$ a.s..
  Moreover, for any $A \in B_r$:
  \begin{equation}
    \label{eq:excess-decomposition}
    \cL_{r,A} = 2(Y - \E(Y|X)) \inr{X,A_r^* - A} + (
    2\E(Y|X) - \inr{A + A_r^*,X}) \inr{X,A_r^* - A}.
  \end{equation}
  Thus, using Assumption~\ref{ass:Y}, we obtain
  \begin{align*}
    \nonumber \E \cL_{r, A}^2 &= \E \big[4(Y - \E(Y|X))^2
    \inr{X,A-A_r^*}^2 +
    (2\E(Y|X)-\inr{X,A+A_r^*})^2 \inr{X,A-A_r^*}^2\big] \\
    \nonumber &\leq 4 \E\big[\inr{X,A - A_r^*}^2 \E \big[(Y -
    \E(Y|X))^2 | X \big] \big] + 4 (b_{Y,\infty} + C_r)^2 \E\inr{X, A-A_r^*}^2 \\
    &\leq 4 (b_{Y,2}^2 + (b_{Y\infty} + C_r)^2 ) \E\inr{X,A-A_r^*}^2,
  \end{align*}
  which concludes the proof using (\ref{eq:one-side-bern1}).
\end{proof}

\subsection{The isomorphic property of the excess loss functions class}

The \emph{isomorphic property} of a functions class has been
introduced in \cite{Mendelson08regularizationin} and is a consequence
of Talagrand's concentration inequality (cf.~\cite{MR1419006}) applied
to a localization of the functions class together with the Bernstein
property of this class (here this property was studied in
Lemma~\ref{lem:convexity}). We recall here the argument in our special
case.
\begin{theorem}[\cite{MR2240689}]
  \label{cor:isomorphy}
  There exists an absolute constant $c > 0$ such that the following
  holds. Let Assumptions~\ref{ass:X} and~\ref{ass:Y} hold. Let $r > 0$
  and $\lambda(r)>0$ be such that
  \begin{equation*}
    \E\|P_n - P\|_{V_{r, \lambda(r)}} \leq \frac{\lambda(r)}{8}.
  \end{equation*}
  Then, with probability larger than $1 -4 e^{-x}$: for all $A \in B_r$
  \begin{equation*}
    \frac 12 P_n \cL_{r, A} - \rho_n(r, x) \leq P \cL_{r, A} \leq 2
    P_n \cL_{r, A} + \rho_n(r, x),
  \end{equation*}
  where
  \begin{equation*}
    \rho_n(r, x) := c \Big( \lambda(r) +
    \big[b_{Y, \psi_1} + b_{Y,\infty} + b_{Y,2} + C_r \big]^2
    \Big(\frac{x \log n}{n}\Big) \Big),
  \end{equation*}
  and $C_r$ has been introduced in~\eqref{eq:parameter-Cr}.
\end{theorem}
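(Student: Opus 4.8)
The plan is to derive this statement as a fairly direct consequence of Talagrand's concentration inequality for empirical processes together with the Bernstein condition established in Lemma~\ref{lem:convexity}, following the ``isomorphic property'' argument of \cite{Mendelson08regularizationin, MR2240689}. The key objects are the localized star-shaped class $V_{r,\lambda}$ and the excess loss class $\cL_r$. First I would record the boundedness and Bernstein properties of the elements of $V_r$: by Lemma~\ref{lem:convexity}, every $\cL_{r,A}$ satisfies $\E\cL_{r,A}^2 \le 4(b_{Y,2}^2 + (b_{Y,\infty}+C_r)^2)\E\cL_{r,A}$, and since $V_r$ is the star-shaped hull at $0$, elements $\alpha\cL_{r,A}$ inherit a comparable Bernstein bound (the variance scales like $\alpha^2$, the mean like $\alpha$, so the ratio only improves for $\alpha\le 1$). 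Likewise each $g\in V_r$ is bounded in $L_\infty$ by a quantity controlled by $b_{Y,\psi_1}+b_{Y,\infty}+C_r$ up to the $\psi_1$-tail of the noise term $2(Y-\E(Y|X))\inr{X,A_r^*-A}$, which contributes the $\log n$ factor via a standard truncation of the sub-exponential variable.

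Next I would apply Talagrand's inequality (cf.~\cite{MR1419006}) to the class $V_{r,\lambda(r)}$. Because $\E g \le \lambda(r)$ and $\E g^2 \le c'(b_{Y,2}^2+(b_{Y,\infty}+C_r)^2)\lambda(r)$ on this localized class, Talagrand's bound reads, with probability at least $1-e^{-x}$,
\begin{equation*}
  \|P_n-P\|_{V_{r,\lambda(r)}} \le 2\,\E\|P_n-P\|_{V_{r,\lambda(r)}} + c\Big(\sqrt{\frac{\sigma^2 x}{n}} + \frac{U x}{n}\Big),
\end{equation*}
where $\sigma^2$ is the weak variance and $U$ the $L_\infty$ (or $\psi_1$, after truncation) bound on the class; combining the hypothesis $\E\|P_n-P\|_{V_{r,\lambda(r)}}\le \lambda(r)/8$ with the sub-root/peeling structure of the star-shaped class, the Cauchy--Schwarz cross term $\sqrt{\sigma^2 x/n}\lesssim \sqrt{\lambda(r)\,b^2 x/n}$ is absorbed into $\tfrac18\lambda(r) + c\,b^2 x/n$, so one obtains $\|P_n-P\|_{V_{r,\lambda(r)}} \le \tfrac12\lambda(r) + c(b_{Y,\psi_1}+b_{Y,\infty}+b_{Y,2}+C_r)^2 (x\log n)/n =: \tfrac12\rho_n(r,x)$, say.

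Finally I would upgrade this uniform deviation bound on the \emph{localized} class to the pointwise two-sided isomorphic inequality on all of $B_r$ by the standard star-shaped homogeneity/peeling argument: given any $A\in B_r$, set $g = \cL_{r,A}$; if $\E g \le \lambda(r)$ then $g\in V_{r,\lambda(r)}$ directly and $|P_n g - Pg|\le \tfrac12\rho_n(r,x)$ gives the claim; if $\E g > \lambda(r)$, rescale by $\alpha = \lambda(r)/\E g < 1$ so that $\alpha g \in V_{r,\lambda(r)}$, apply the deviation bound to $\alpha g$, and divide by $\alpha$ — using $\alpha^{-1} = \E g/\lambda(r)$ and the Bernstein control $\E g^2 \lesssim b^2 \E g$ one gets $|P_n g - Pg| \le \tfrac12 Pg + c\,\rho_n(r,x)$, which rearranges to $\tfrac12 P_n g - c\rho_n(r,x) \le Pg \le 2 P_n g + c\rho_n(r,x)$. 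Tracking the $4e^{-x}$ probability requires a union over the two regimes plus the truncation event for the $\psi_1$ variable, hence the constant $4$. I expect the main obstacle to be the bookkeeping in the peeling/rescaling step — making sure the variance-to-mean ratio in the Bernstein inequality is uniform over the star-shaped hull and that the $\psi_1$-truncation only costs the stated $\log n$ factor rather than something worse — since the concentration inequality itself is an off-the-shelf tool.
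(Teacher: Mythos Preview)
Your overall strategy --- Talagrand-type concentration on the localized star-shaped class, with the Bernstein property from Lemma~\ref{lem:convexity} controlling the variance, followed by a rescaling argument --- is the paper's approach. But the rescaling step as you describe it does not close. You apply the deviation bound at level $\lambda(r)$ and get $\|P_n-P\|_{V_{r,\lambda(r)}}\le\tfrac12\lambda(r)+c\,b^2 x\log n/n$. For $g=\cL_{r,A}$ with $Pg>\lambda(r)$ you rescale by $\alpha=\lambda(r)/Pg$ and divide back, obtaining
\begin{equation*}
|P_ng-Pg|\le \frac{Pg}{\lambda(r)}\Big(\frac{\lambda(r)}{2}+c\,b^2\frac{x\log n}{n}\Big)=\frac{1}{2}Pg + c\,b^2\frac{x\log n}{n}\cdot\frac{Pg}{\lambda(r)}.
\end{equation*}
The second term carries the unbounded factor $Pg/\lambda(r)$ and is \emph{not} $\le c'\rho_n(r,x)$. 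Invoking the Bernstein control $\E g^2\lesssim b^2\E g$ here does nothing: it was already spent on the variance term in the concentration step and says nothing about $Pg/\lambda(r)$.

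The paper's fix is to first use the star-shaped monotonicity (the map $\lambda\mapsto \E\|P_n-P\|_{V_{r,\lambda}}/\lambda$ is non-increasing) to transfer the hypothesis from level $\lambda(r)$ to the larger level $\rho_n(r,x)$: since $\rho_n(r,x)\ge\lambda(r)$, one gets $\E\|P_n-P\|_{V_{r,\rho_n(r,x)}}\le\rho_n(r,x)/8$. Concentration is then applied at level $\rho_n(r,x)$; now the variance term $\sqrt{b^2\rho_n(r,x)x/n}$ and the envelope term are absorbed into $\rho_n(r,x)/2$ \emph{by the very definition of} $\rho_n(r,x)$, yielding $\|P_n-P\|_{V_{r,\rho_n(r,x)}}\le\rho_n(r,x)/2$. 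Rescaling is done at level $\rho_n(r,x)$, so for $Pg=\beta>\rho_n(r,x)$ one divides by $\rho_n(r,x)/\beta$ and obtains $|P_ng-Pg|\le\beta/2=Pg/2$ cleanly. This is the missing idea. A minor point: the paper uses Adamczak's $\psi_1$ version of Talagrand's inequality \cite{MR2424985} together with Pisier's maximal inequality $\|\max_{i\le n} Z_i\|_{\psi_1}\le(\log n)\|Z\|_{\psi_1}$ rather than truncation; the $\log n$ and the probability $1-4e^{-x}$ come from there, not from a union over regimes.
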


\begin{proof}
  We follow the line of \cite{MR2240689}. Let $\lambda>0$ and
  $x>0$. Thanks to \cite{MR2424985}, with probability larger than
  $1-4\exp(-x)$,
  \begin{equation}\label{eq:Adamcjak}
    \norm{P - P_n}_{V_{r,\lambda}} \leq 2
    \E\norm{P-P_n}_{V_{r,\lambda}} + c_1 \sigma(V_{r,\lambda})
    \sqrt{\frac x n} + c_2 b_n(V_{r,\lambda})\frac x n
  \end{equation}
  where, by using the Bernstein's properties of $\cL_r$
  (cf. Lemma~\ref{lem:convexity})
  \begin{align}
    \label{eq:Variance-term-adamcjak}
    \nonumber \sigma^2(V_{r,\lambda}) &:= \sup_{g\in V_{r,\lambda}}
    \var(g) \leq \sup \Big(\E(\alpha \cL_{r,A})^2 : 0 \leq \alpha \leq
    1, A \in B_r, \E( \alpha \cL_{r,A}) \leq \lambda \Big) \\
    \nonumber &\leq \sup \Big( 4 ( b_{Y,2}^2 + (b_{Y,\infty} + C_r)^2)
    \E(\alpha \cL_{r, A}) : 0 \leq \alpha \leq 1, A \in
    B_r, \E(\alpha\cL_{r,A}) \leq \lambda\Big) \\
    &\leq 4 ( b_{Y,2}^2 + (b_{Y,\infty} + C_r)^2 ) \lambda,
  \end{align}
  and using Pisier's inequality (cf. \cite{vanderVaartWellner}):
  \begin{align}
    \label{eq:psi1-term-adamcjak}
    \nonumber &b_n(V_{r,\lambda}) := \Big\| \max_{1\leq i\leq n}
    \sup_{g\in V_{r,\lambda}} g(X_i,Y_i) \Big\|_{\psi_1} \leq \log n
    \Big\| \sup_{g\in
      V_{r,\lambda}} g(X, Y) \Big\|_{\psi_1} \\
    \nonumber &= \log n \Big\| \sup \Big(\alpha(2Y - \inr{X, A +
      A_r^*}) \inr{X, A_r^* - A} : 0 \leq \alpha \leq 1, A \in
    B_r \Big) \Big\|_{\psi_1} \\
    &\leq 4(\log n) (b_{Y, \psi_1} + b_{Y,\infty} + C_r ) C_r,
  \end{align}
  where we used decomposition~\eqref{eq:excess-decomposition} and
  Assumption~\ref{ass:Y} together with the uniform bound $|\inr{A,X}|
  \leq C_r$ holding  for all $A \in B_r$.
  
  Moreover, for any $\lambda>0$, $V_{r,\lambda}$ is star-shaped so $G
  : \lambda \mapsto \E\norm{P-P_n}_{V_{r,\lambda}}/\lambda$ is
  non-increasing. Since $G(\lambda(r))\leq1/8$ and
  $\rho_n(r,x)\geq\lambda(r)$, we have
  \begin{equation*}
    \E\norm{P-P_n}_{V_{r,\rho_n(r,x)}}\leq \rho_n(r,x)/8,
  \end{equation*}
  which yields, in Equation~\eqref{eq:Adamcjak} together with the
  variance control of Equation~(\ref{eq:Variance-term-adamcjak}) and
  the control of Equation~(\ref{eq:psi1-term-adamcjak}), that there
  exists an event $\Omega_0$ of probability measure greater than
  $1-4\exp(-x)$ such that, on $\Omega_0$,
  \begin{align}
    \label{eq:Admaclak2}
    \nonumber \norm{P-P_n}_{V_{r,\rho_n(r,x)}} &\leq
    \frac{\rho_n(r,x)}{4} + c_1 (b_{Y, \infty} + b_{Y,2} + C_r )
    \sqrt{\frac{\rho_n(r,x) x}{n}} \\
    \nonumber
    &+ c_2 ( b_{Y,\psi_1} + b_{Y,\infty} + C_r ) C_r\frac{x\log n}{n} \\
    & \leq\frac{\rho_n(r,x)}{2}
  \end{align}
  in view of the definition of $\rho_n(r,x)$.  In particular, on
  $\Omega_0$, for every $A\in B_r$ such that
  $P\cL_{r,A}\leq\rho_n(r,x)$, we have $|P\cL_{r,A} -
  P_n\cL_{r,A}|\leq\rho_n(r,x)/2$. Now, take $A\in B_r$ such that
  $P\cL_{r,A}=\beta>\rho_n(r,x)$ and set
  $g=\rho_n(r,x)\cL_{r,A}/\beta$. Since $g\in V_{r,\rho_n(r,x)}$,
  Equation~(\ref{eq:Admaclak2}) yields, on $\Omega_0$, $|Pg-P_ng|\leq
  \rho_n(r,x)/2<\beta/2$ and so $(1/2)P_n\cL_{r,A}\leq P\cL_{r,A}\leq
  (3/2)P_n\cL_{r,A}$ which concludes the proof.
\end{proof}

A function $r \mapsto \lambda(r)$ such that $\E\|P_n - P\|_{V_{r,
    \lambda(r)}} \leq \lambda(r)/8$ is called an \emph{isomorphic
  function} and is directly connected to the choice of the
penalization used in the procedure which was introduced in
Section~\ref{sec:main_results}. The computation of this function is
related to the complexity of Schatten balls, computed in the next
section.

\subsection{Complexity of Schatten balls}

The \emph{generic chaining} technique (see \cite{Talagrand:05}) is a
powerful technique for the control of the supremum of empirical
processes. For a subgaussian process, such a control is achieved using
the $\gamma_2$ functional recalled in the next definition.
\begin{definition}[\cite{Talagrand:05}] Let $(F, d)$ be a metric
  space. We say that $(F_j)_{j \geq 0}$ is an \emph{admissible
  sequence} of partitions of $F$ if $|F_0| = 1$ and $|F_j| \leq
  2^{2^j}$ for all $j\geq1$. The $\gamma_2$ functional is defined by
  \begin{equation*}
    \gamma_2(F, d) = \inf_{(F_j)_j} \sup_{f \in F} \sum_{j \geq 0} 2^{j / 2}
    d(f, F_j),
  \end{equation*}
  where the infimum is taken over all admissible sequence $(F_j)_{j\geq1}$ of $F$.
\end{definition}

A classical upper bound on the $\gamma_2$ functional is the  Dudley's entropy integral:
\begin{equation}
  \label{eq:Dudley-entropy}
  \gamma_2(F,d)\leq c_0\int_0^\infty\sqrt{\log N(F,d,\epsilon)}d\epsilon,
\end{equation}where
$N(B, \norm{\cdot}, \varepsilon)$ is the minimal number of balls with respect to the metric $d$ of radius $\epsilon$ needed to cover $B$. When $B$ enjoys some convexity properties, this bound can be improved. Let $(E, \norm{\cdot})$ be a Banach space. We denote by $B(E)$ its
unit ball. We say that $(E,\norm{\cdot})$ is $2$-convex if there
exists some $\rho>0$ such that for all $x,y\in B(E)$, we have
\begin{equation*}
  \norm{x+y}\leq 2-2\rho\norm{x-y}^2.
\end{equation*}
In the case of $2$-convex bodies, the following theorem gives an upper
bound on the $\gamma_2$ functional that can improve the one given by
Dudley's entropy integral.
\begin{theorem}[\cite{Talagrand:05}]
  \label{theo:complexity-Talagrand-2convexBodies}
  For any $\rho>0$, there exists $c(\rho)>0$ such that if
  $(E,\norm{\cdot})$ is a $2$-convex Banach space and $\norm{\cdot}_E$
  is another norm on $E$, then
  \begin{equation*}
    \gamma_{2}(B(E),\norm{\cdot}_E) \leq c(\rho) \Big(\int_0^\infty
    \epsilon \log N(B(E),\norm{\cdot}_E, \epsilon) d\epsilon
    \Big)^{1/2}.
  \end{equation*}
\end{theorem}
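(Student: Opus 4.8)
The plan is to reproduce Talagrand's generic chaining argument for $2$-convex bodies; the uniform convexity of $(E,\norm{\cdot})$ enters only through a Clarkson-type inequality, and in the Hilbert-space case that is actually needed in this paper (the Schatten-$2$ space) there is also a short alternative route, which I note at the end.

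\emph{Reduction and framework.} Since $\gamma_2(\cdot,\norm{\cdot}_E)$ and $\epsilon\mapsto N(\cdot,\norm{\cdot}_E,\epsilon)$ are monotone under inclusion, writing $B(E)$ as an increasing union of its finite subsets reduces the claim to bounding $\gamma_2(F,\norm{\cdot}_E)$ for an arbitrary finite $F\subset B(E)$ by the right-hand side with $F$ in place of $B(E)$. (Finiteness of the integral already forces $B(E)$ to be $\norm{\cdot}_E$-bounded, so nothing happens at infinity, and $2$-convexity forces $\log N(B(E),\norm{\cdot}_E,\epsilon)=o(\epsilon^{-2})$ as $\epsilon\to0$, so the integral converges.) I would then invoke the abstract ``growth functional'' criterion from \cite{Talagrand:05}: if one attaches to every $A\subset F$ and every integer $n\ge0$ a number $F_n(A)\ge0$, non-increasing in $n$, satisfying a suitable growth condition, then $\gamma_2(F,\norm{\cdot}_E)\le c\,F_0(F)$ for an absolute constant $c$.

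\emph{Choice of the functional.} I would take
\[
  F_n(A):=K(\rho)\Big(\int_0^{\diam_E(A)}\epsilon\,\log N(A,\norm{\cdot}_E,\epsilon)\,d\epsilon\Big)^{1/2},
\]
with $K(\rho)$ a large constant depending only on the $2$-convexity constant $\rho$ and $\diam_E$ the $\norm{\cdot}_E$-diameter. This is trivially non-increasing in $n$, and $F_0(F)\le K(\rho)\big(\int_0^\infty\epsilon\log N(B(E),\norm{\cdot}_E,\epsilon)\,d\epsilon\big)^{1/2}$ is exactly the target bound, so everything reduces to verifying the growth condition.

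\emph{The growth condition, and the main obstacle.} One must produce $\eta(\rho),\kappa(\rho)>0$ such that whenever $A\subset B(E)$ and $t_1,\dots,t_m\in A$ are $a$-separated for $\norm{\cdot}_E$ with $m\ge 2^{2^{n+1}}$, then, setting $H_\ell:=\{s\in A:\norm{s-t_\ell}_E\le\eta(\rho)a\}$,
\[
  F_n(A)\ \ge\ \kappa(\rho)\,a\,2^{n/2}\ +\ \min_{1\le\ell\le m}F_{n+1}(H_\ell).
\]
The term $\kappa(\rho)a2^{n/2}$ is the usual one-step chaining gain; the real content is an \emph{additivity of the entropy budget}, namely (after averaging over $\ell$) that $\tfrac1m\sum_\ell\int_0^{\diam_E(H_\ell)}\epsilon\log N(H_\ell,\norm{\cdot}_E,\epsilon)\,d\epsilon\le\int_0^{\diam_E(A)}\epsilon\log N(A,\norm{\cdot}_E,\epsilon)\,d\epsilon-\kappa'(\rho)\,a^2\,2^n$. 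The easy half is clear: the $\eta(\rho)a$-balls about the $t_\ell$ are pairwise disjoint, so a $\norm{\cdot}_E$-covering of $A$ at a small scale $\epsilon$ distributes among the $H_\ell$, giving $\sum_\ell N(H_\ell,2\epsilon)\le N(A,\epsilon)$; combined with Jensen ($\tfrac1m\sum_\ell\log N(H_\ell,2\epsilon)\le\log\tfrac1m\sum_\ell N(H_\ell,2\epsilon)\le\log N(A,\epsilon)-\log m$) and $\log m\ge 2^{n+1}\log2$, this already yields such an inequality \emph{up to a fixed multiplicative constant} in front of the entropy integral of $A$, the loss coming from the scale-doubling $\epsilon\mapsto2\epsilon$. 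But the growth condition tolerates \emph{no} constant in front of $\min_\ell F_{n+1}(H_\ell)$, so this loss must be removed, and this is precisely where $2$-convexity is indispensable: the Clarkson-type inequality equivalent to it, $\norm{\tfrac{x+y}{2}}^2+c(\rho)\norm{\tfrac{x-y}{2}}^2\le\tfrac12(\norm{x}^2+\norm{y}^2)$, iterates to a barycentre variance bound $\norm{\bar z}^2+c(\rho)\tfrac1N\sum_i\norm{z_i-\bar z}^2\le1$ for $z_1,\dots,z_N\in B(E)$, which prevents the sub-pieces $H_\ell$ from simultaneously hugging the boundary of $B(E)$ in a common direction and thereby upgrades the scale-doubling constant to $1+o(1)$, closing the iteration. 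Making this precise — turning the barycentre variance bound into the sharp additivity of covering numbers at all scales, with explicit control of the dependence of all constants on $\rho$ — is the only genuine difficulty; the remainder is routine generic-chaining bookkeeping.

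\emph{Conclusion.} Once the growth condition holds, the abstract theorem gives $\gamma_2(F,\norm{\cdot}_E)\le c\,F_0(F)\le c(\rho)\big(\int_0^\infty\epsilon\log N(B(E),\norm{\cdot}_E,\epsilon)\,d\epsilon\big)^{1/2}$ uniformly over finite $F$, and the reduction step finishes the proof. In the case relevant here, $E$ is the Schatten-$2$ space, i.e. a Hilbert space, for which Clarkson's inequality is just the parallelogram identity; there one may instead compare $\gamma_2(B(E),\norm{\cdot}_E)$ with the expectation of the canonical Gaussian process on $B(E)$ whose covariance metric is $\norm{\cdot}_E$ and invoke the classical metric entropy of ellipsoids, both quantities then being of order $\big(\sum_i\sigma_i^2\big)^{1/2}$, where the $\sigma_i$ are the ``singular values'' of $\norm{\cdot}_E$ relative to $\norm{\cdot}$.
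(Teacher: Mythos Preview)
The paper gives no proof of this theorem: it is quoted from Talagrand's monograph and used as a black box in the proof of Proposition~\ref{prop:complexity}. There is therefore nothing in the paper to compare your attempt against; you are reconstructing Talagrand's argument itself.

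Your outline follows the growth-functional strategy and correctly isolates the crux, but it does not close the gap you yourself flag, and that gap is more serious than ``routine bookkeeping''. With your functional $F_n(A)=K(\rho)\big(\int_0^{\diam_E A}\epsilon\log N(A,\norm{\cdot}_E,\epsilon)\,d\epsilon\big)^{1/2}$ (constant in $n$), the disjointness/Jensen step yields only a \emph{quadratic} inequality $F(A)^2\ge c_1\min_\ell F(H_\ell)^2+c_2\,a^2 2^n$, with $c_1<1$ coming from the scale doubling $\epsilon\mapsto2\epsilon$. The growth condition, however, is the \emph{additive} statement $F(A)\ge\min_\ell F(H_\ell)+\kappa\,a\,2^{n/2}$. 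Even granting that $2$-convexity pushes $c_1$ to $1$ as you suggest, the square-root passage still fails whenever $F(H_\ell)\gg a\,2^{n/2}$, since then $F(A)-F(H_\ell)\approx c_2 a^2 2^n/(2F(H_\ell))\ll a\,2^{n/2}$. Your barycentre-variance heuristic points at the right phenomenon---the pieces $H_\ell$ cannot all sit flush against $\partial B(E)$---but it does not by itself repair this arithmetic obstruction. In Talagrand's argument the functional and the way $2$-convexity shrinks the pieces in the $\norm{\cdot}$-direction (not merely the $\norm{\cdot}_E$-direction) are matched so that the iteration closes; that matching is the content of the proof, and you have not supplied it.

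Your final paragraph is a legitimate shortcut for the only instance the paper needs, $E=S_2$: there the statement is the ellipsoid case, and both $\gamma_2$ and the entropy integral can be computed and compared directly. Just note that $\norm{\cdot}_{\infty,n}=\max_i|\inr{X_i,\cdot}|$ is a polytope norm, not a Euclidean one, so ``singular values of $\norm{\cdot}_E$ relative to $\norm{\cdot}$'' do not literally exist; the comparison should be phrased through entropy numbers rather than a spectral decomposition.
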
 The  generic chaining technique provides  the following upper bound on Gaussian processes.
\begin{theorem}[\cite{Talagrand:05}]
  \label{theo:Generic-chaining-sous-gaussien}
  There is an absolute constants $c > 0$ such that the following
  holds. If $(Z_f)_{f \in F}$ is a subgaussian process for some metric
  $d$ (i.e. $\norm{Z_f-Z_g}_{\psi_2}\leq c_0 d(f,g)$ for all $f,g\in F$) and if $f_0 \in F$, then one has
  \begin{equation*}
    \E \sup_{f \in F} |Z_f - Z_{f_0}| \leq c \gamma_2(F, d).
  \end{equation*}
\end{theorem}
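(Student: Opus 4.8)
The plan is to establish the classical majorizing-measure (generic chaining) upper bound: fix a nearly optimal admissible sequence $(F_j)_{j\geq 0}$ of $F$, chain every $f$ back to $f_0$ along this sequence, and control all the chaining increments \emph{simultaneously} by a single union bound whose threshold grows like $2^{j/2}$ at level $j$ --- the doubly-exponential cardinality $|F_j|\leq 2^{2^j}$ being exactly absorbed by the subgaussian tail $e^{-u^2 2^j}$ as soon as $u$ is a large enough multiple of $c_0$. The subgaussianity hypothesis $\norm{Z_f-Z_g}_{\psi_2}\leq c_0 d(f,g)$ is all that is used, besides the definition of $\gamma_2(F,d)$ recalled above.

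First I would reduce to the case where $F$ is finite, by replacing $F$ by an arbitrary finite subset and passing to the supremum at the end (separability/measurability issues are routine here); and, by padding and reindexing the partitions at the cost of a universal constant, I would assume that $f_0\in F_j$ for every $j$ and that $F_j=F$ for $j$ large. I then pick an admissible sequence with $\sup_{f\in F}\sum_{j\geq 0}2^{j/2}d(f,F_j)\leq 2\gamma_2(F,d)$ and, for each $f$, a point $\pi_j(f)\in F_j$ with $d(f,\pi_j(f))=d(f,F_j)$, so that $\pi_0(f)=f_0$, $\pi_j(f)=f$ eventually, and the telescoping identity $Z_f-Z_{f_0}=\sum_{j\geq 1}\bigl(Z_{\pi_j(f)}-Z_{\pi_{j-1}(f)}\bigr)$ holds.

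The key step is the union bound over links. At level $j$ the set $\{(\pi_j(f),\pi_{j-1}(f)):f\in F\}$ has cardinality at most $|F_j|\,|F_{j-1}|\leq 2^{2^{j+1}}$, and each such increment satisfies $\norm{Z_s-Z_t}_{\psi_2}\leq c_0 d(s,t)$, hence $\P\bigl(|Z_s-Z_t|\geq u\,2^{j/2}d(s,t)\bigr)\leq 2\exp(-u^2 2^j/c_0^2)$ for every $u>0$ (when $d(s,t)=0$ the increment vanishes a.s.). Summing over all links and all levels, for $u\geq 2c_0$ the probability that some increment violates its bound is at most $\sum_{j\geq1}2^{2^{j+1}}\cdot 2\exp(-u^2 2^j/c_0^2)\leq C_1\exp(-c_2 u^2/c_0^2)$, since $2^{2^{j+1}}=\exp(2\log 2\cdot 2^j)$ and $2\log 2-u^2/c_0^2\leq -\tfrac12 u^2/c_0^2$ once $u\geq 2c_0$. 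On the complementary event, for every $f$, the triangle inequality and the near-optimality of $(F_j)$ give $|Z_f-Z_{f_0}|\leq u\sum_{j\geq1}2^{j/2}d(\pi_j(f),\pi_{j-1}(f))\leq u\sum_{j\geq1}2^{j/2}\bigl(d(f,F_j)+d(f,F_{j-1})\bigr)\leq C_3\,u\,\gamma_2(F,d)$. Hence $\P\bigl(\sup_{f\in F}|Z_f-Z_{f_0}|\geq C_3 u\,\gamma_2(F,d)\bigr)\leq C_1\exp(-c_2 u^2/c_0^2)$ for all $u\geq 2c_0$, and integrating this subgaussian tail yields $\E\sup_{f\in F}|Z_f-Z_{f_0}|\leq c\,\gamma_2(F,d)$ (with $c$ depending only on the absolute constant $c_0$).

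The main --- and really the only delicate --- point is to carry out the chaining with a \emph{single} realization of the randomness rather than level by level: bounding $\E\sup_f\sum_j|Z_{\pi_j(f)}-Z_{\pi_{j-1}(f)}|$ by $\sum_j\E\sup_f|Z_{\pi_j(f)}-Z_{\pi_{j-1}(f)}|$ would only reproduce a Dudley-type quantity of the form $\inf\sum_j 2^{j/2}\sup_f d(f,F_j)$, which is in general strictly larger than $\gamma_2(F,d)=\inf\sup_f\sum_j 2^{j/2}d(f,F_j)$. The union bound above is exactly what preserves the sharper ``$\sup$ inside the sum'' structure; once this is set up, matching the cardinalities $2^{2^j}$ against the tail $e^{-u^2 2^j}$ and checking the resulting series converge are routine computations.
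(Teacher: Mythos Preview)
The paper does not prove this theorem at all: it is simply quoted from Talagrand's book \cite{Talagrand:05} and used as a black box (in the proof of Proposition~\ref{prop:penalty}). Your proposal is a correct rendition of the standard generic chaining argument one finds in that reference --- telescoping along a near-optimal admissible sequence, a single union bound with thresholds $u\,2^{j/2}$ so that the doubly-exponential cardinalities $2^{2^{j+1}}$ are absorbed by the subgaussian tails, and integration of the resulting tail for $\sup_f|Z_f-Z_{f_0}|$ --- so there is nothing to compare.

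Two cosmetic remarks. First, the paper's Definition of $\gamma_2$ is phrased with \emph{partitions} $(F_j)$ rather than nets; your proof implicitly switches to the equivalent net formulation (choosing one point per cell), which is fine but worth a word. Second, in forcing $\pi_0(f)=f_0$ you are using that one may take $F_0=\{f_0\}$; since $|F_0|=1$, replacing the single element of $F_0$ by $f_0$ only shifts $\gamma_2$ by $d(f_0,F_0)\leq \diam(F,d)$, which is harmlessly absorbed in the constant via $\diam(F,d)\leq c\,\gamma_2(F,d)$.
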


The metric  used to measure the complexity of the excess loss classes we are working on is an empirical one defined for any $A\in\cM_{m,T}$ by
\begin{equation}
  \label{eq:ell-infty-n-metric}
  \norm{A}_{\infty,n} := \max_{1\leq i\leq n}|\inr{X_i,A}|.
\end{equation}This metric comes out of the so-called $L_{\infty,n}$-method of M.~Rudelson introduced in~\cite{MR1694526} and first used in learning theory in \cite{Mendelson08regularizationin}.
We denote by $B(S_p)$ the unit ball of the Banach space $S_p$ of
matrices in $\cM_{m, T}$ endowed with the Schatten norm
$\norm{\cdot}_{S_p}$. We denote also by $B_1$ the unit ball of
$\cM_{m, T}$ endowed with the $\ell_1$-norm $\norm{\cdot}_{1}$. In the following, we compute the complexity of the balls $B(S_1),B(S_2)$ and $B_1$ with respect to the empirical metric $\norm{\cdot}_{\infty,n}$.
\begin{proposition}
  \label{prop:complexity}There exists an absolute constant $c>0$ such that the following holds.  Assume that $\norm{X_i}_{S_2},\norm{X_i}_\infty\leq1$ for all $i=1, \ldots, n$. Then, we have
  \begin{equation*}
    \gamma_2(r B(S_1), \norm{\cdot}_{\infty,n}) \leq \gamma_2(r
    B(S_2), \norm{\cdot}_{\infty,n}) \leq c r \log n
  \end{equation*}
  and
  \begin{equation*}
    \gamma_2(rB_1, \norm{\cdot}_{\infty,n}) \leq c r (\log n)^{3/2} \sqrt{\log(mT)}.
  \end{equation*}
  Moreover, if we assume that $X_1, \ldots, X_n$ have been obtained  in the matrix
  completion model then 
  \begin{equation*}
    \gamma_2(rB_1, \norm{\cdot}_{\infty,n}) \leq c r (\log n)^{3/2}.
  \end{equation*}
\end{proposition}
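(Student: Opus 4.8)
The plan is to treat the three balls separately, according to whether their defining norm is $2$-convex, and in each case to combine a ``convexity/sub-gaussian'' entropy estimate (dimension-free, but blowing up like $\epsilon^{-2}$) with a crude volumetric one (integrable, but carrying a dimension), glued at the right scale. Write $\Phi\colon\cM_{m,T}\to\R^n$ for the linear map $\Phi(A)=(\inr{X_1,A},\dots,\inr{X_n,A})$, so that $\norm{A}_{\infty,n}=\norm{\Phi(A)}_{\ell_\infty^n}$; thus every covering number in the metric~\eqref{eq:ell-infty-n-metric} is a covering number of a subset of $\R^n$ for $\norm{\cdot}_{\ell_\infty^n}$, and since $\rank(\Phi)\le n$ this yields, at \emph{small} scales, volumetric bounds free of $mT$. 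The leftmost inequality is immediate: $\norm{A}_{S_2}\le\norm{A}_{S_1}$ for every $A$ (the $\ell_2$ norm of the singular values is dominated by their $\ell_1$ norm), hence $rB(S_1)\subseteq rB(S_2)$, and $\gamma_2(\cdot,\norm{\cdot}_{\infty,n})$ is monotone under inclusion.

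For $\gamma_2(rB(S_2),\norm{\cdot}_{\infty,n})$ I would use that $S_2$ is a Hilbert space, hence $2$-convex, and apply Theorem~\ref{theo:complexity-Talagrand-2convexBodies} with $E=(\cM_{m,T},\norm{\cdot}_{S_2})$ and auxiliary norm $\norm{\cdot}_{\infty,n}$ (replacing it by $\norm{\cdot}_{\infty,n}+\delta\norm{\cdot}_{S_2}$ and letting $\delta\to0$ if one insists on a genuine norm), together with the $1$-homogeneity of $\gamma_2$; this reduces matters to bounding $\int_0^\infty\epsilon\log N(B(S_2),\norm{\cdot}_{\infty,n},\epsilon)\,d\epsilon$. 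The dual Sudakov inequality gives $\log N(B(S_2),\norm{\cdot}_{\infty,n},\epsilon)\lesssim\epsilon^{-2}\big(\E\norm{G}_{\infty,n}\big)^2$ for a standard Gaussian matrix $G$, and since each $\inr{X_i,G}$ is centered Gaussian with variance $\norm{X_i}_{S_2}^2\le1$ one has $\E\norm{G}_{\infty,n}=\E\max_{i\le n}|\inr{X_i,G}|\lesssim\sqrt{\log n}$, so $\log N(B(S_2),\norm{\cdot}_{\infty,n},\epsilon)\lesssim\epsilon^{-2}\log n$. At small scales the volumetric bound $\log N(B(S_2),\norm{\cdot}_{\infty,n},\epsilon)\lesssim n\log(3/\epsilon)$ (valid since $\Phi(B(S_2))\subseteq[-1,1]^n$, using $|\inr{X_i,A}|\le\norm{X_i}_{S_2}\norm{A}_{S_2}\le1$) is better. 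Taking the minimum, splitting the integral at the crossover scale $\epsilon_0\asymp n^{-1/2}$ and integrating yields $\int_0^\infty\epsilon\log N(B(S_2),\norm{\cdot}_{\infty,n},\epsilon)\,d\epsilon\lesssim(\log n)^2$, hence $\gamma_2(rB(S_2),\norm{\cdot}_{\infty,n})\lesssim r\log n$. (Plain Dudley~\eqref{eq:Dudley-entropy} would only give the weaker $r(\log n)^{3/2}$, which is why the $2$-convexity of $S_2$ is exploited here.)

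For $\gamma_2(rB_1,\norm{\cdot}_{\infty,n})$ the norm $\norm{\cdot}_1$ is not $2$-convex, so I would instead use Dudley's bound~\eqref{eq:Dudley-entropy}, $\gamma_2(rB_1,\norm{\cdot}_{\infty,n})\le c_0\int_0^\infty\sqrt{\log N(rB_1,\norm{\cdot}_{\infty,n},\epsilon)}\,d\epsilon$. Since $rB_1$ is the convex hull of the $2mT$ points $\pm r\,e_{p,q}$, each with $\norm{r\,e_{p,q}}_{\infty,n}=r\max_i|(X_i)_{p,q}|\le r$ (here $\norm{X_i}_\infty\le1$ enters), Maurey's empirical method applies: approximating a point of $rB_1$ by the average of $k$ i.i.d.\ atoms and using, for each of the $n$ coordinates, a Hoeffding bound together with a union bound over $i$, one sees that $k\asymp r^2\epsilon^{-2}\log n$ atoms suffice, so $\log N(rB_1,\norm{\cdot}_{\infty,n},\epsilon)\lesssim k\log(mT)\lesssim r^2\epsilon^{-2}\log n\,\log(mT)$. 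Combining once more with the volumetric bound $\log N(rB_1,\norm{\cdot}_{\infty,n},\epsilon)\lesssim n\log(3r/\epsilon)$ (from $\Phi(rB_1)\subseteq[-r,r]^n$), splitting the Dudley integral at $\epsilon_0\asymp r\sqrt{\log(mT)/n}$ and computing gives $\gamma_2(rB_1,\norm{\cdot}_{\infty,n})\lesssim r(\log n)^{3/2}\sqrt{\log(mT)}$, the dominant contribution coming from $\epsilon\ge\epsilon_0$. Finally, in the matrix completion model $\inr{X_i,e_{p,q}}=\ind{(p_i,q_i)=(p,q)}$, so $\norm{\cdot}_{\infty,n}$ sees only the $\le n$ observed entries, the vectors $\{\Phi(e_{p,q})\}_{p,q}$ are disjointly supported in $\R^n$, and the variances $\var(\inr{X_i,W})$ of the averaged atoms are small; redoing the argument with Bernstein's inequality in place of Hoeffding's (and a short case split according to whether the point to be covered concentrates on a single observed coordinate) removes the factor $\sqrt{\log(mT)}$ and leaves $\gamma_2(rB_1,\norm{\cdot}_{\infty,n})\lesssim r(\log n)^{3/2}$.

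The step I expect to be the crux is obtaining the logarithmic — rather than polynomial in $m$ and $T$ — dependence: the convexity/Gaussian-width entropy estimates are essentially dimension-free but diverge like $\epsilon^{-2}$ as $\epsilon\to0$, whereas the volumetric ones are integrable but carry a dimension, so one must splice them at exactly the right scale and verify that only $\log n$ (and, for $B_1$, $\log(mT)$) factors survive the integration. The second delicate point is the matrix completion refinement: bringing the exponent of $\log n$ down to $3/2$ forces one to use the sparsity of the sampling operator inside the variance term of the empirical method rather than bounding it trivially.
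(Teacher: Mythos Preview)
Your argument for the first three bounds is essentially the paper's: the inclusion $B(S_1)\subset B(S_2)$; dual Sudakov plus the Gaussian maximal inequality for the large-scale entropy of $B(S_2)$, a volumetric bound at small scales, and Theorem~\ref{theo:complexity-Talagrand-2convexBodies} (using that $S_2$ is Hilbert, hence $2$-convex) to reach $r\log n$; and for $B_1$, the Carl--Maurey empirical method (you phrase it via Hoeffding and a union bound, the paper via a Gaussian comparison --- same outcome) spliced with a volumetric bound and fed into Dudley's integral~\eqref{eq:Dudley-entropy}. Your crossover scales differ cosmetically from the paper's but the integrals come out the same.

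The matrix-completion refinement is where you take a different route. You correctly isolate the key fact --- that $\norm{\cdot}_{\infty,n}$ depends only on the $\le n$ observed entries --- but the paper exploits it more directly: it reduces outright to $N(b_1^{mT},|\cdot|_{\infty,n},\epsilon)\le N(b_1^n,\epsilon b_\infty^n)$ and then quotes Sch\"utt's sharp estimate (Proposition~\ref{prop:schutt}), $\log N(b_1^n,\epsilon b_\infty^n)\asymp\epsilon^{-1}\log(en\epsilon)$ for $1/n\le\epsilon\le1$, after which Dudley's integral is routine. Your proposal --- rerun Maurey with Bernstein and a case split on whether the point to be covered concentrates on a single coordinate --- is heuristically sound (the variance of $\inr{X_i,Z}$ equals $|x_{p_i,q_i}|$ and these sum to at most $1$, so most coordinates have variance $\le\epsilon$), but the sketch hides real work: one must separate the $\le1/\epsilon$ ``heavy'' coordinates and grid them explicitly, then apply Bernstein with variance $\le\epsilon$ on the light ones to get $k\asymp\epsilon^{-1}\log n$ atoms, and finally check that the resulting net of size $\binom{O(n)}{k}$ yields an entropy bound that integrates to the announced power of $\log n$. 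Done carefully this can be pushed through, but the Sch\"utt reference sidesteps all of it.
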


\begin{proof}
  The first inequality is obvious since $B(S_1) \subset B(S_2)$. By
  using Dual Sudakov's inequality (cf. \cite{MR817602}), we have for all $\epsilon>0$,
  \begin{equation*}
    \log N(B(S_2), \norm{\cdot}_{\infty,n}, \epsilon) \leq c_0
    \Big(\frac{\E\norm{G}_{\infty,n}}{\epsilon}\Big)^2,
  \end{equation*}where $G$ is a $m\times T$ matrix with i.i.d. standard Gaussian random variables for entries.   A Gaussian maximal inequality and the fact that $\norm{X_i}_{S_2} \leq 1$ for all $i=1,\ldots,n$ provides $\E\norm{G}_{\infty,n}\leq c_1\sqrt{\log
    n}$, hence
  \begin{equation*}    
    \log N(B(S_2), \norm{\cdot}_{\infty,n}, \epsilon) \leq
    \frac{c_2\log n}{\epsilon^2}.
  \end{equation*}
 Denote by $B_{\infty, n}$ the unit ball of $(\cM_{m,
  T}, \norm{\cdot}_{\infty, n})$ in $V_n = \Span(X_1, \ldots, X_n)$,
the linear subspace of $\cM_{m,T}$ spanned by $X_1, \ldots, X_n$. The volumetric argument provides
  \begin{align*}
    \log N(B(S_2),\norm{\cdot}_{\infty,n}, \epsilon) &\leq \log
    N(B(S_2),\norm{\cdot}_{\infty,n},\eta) + \log
    N(\eta B_{\infty,n},\epsilon B_{\infty,n}) \\
    &\leq \frac{c_2\log n}{\eta^2}+n\log\Big(\frac{3\eta}{\epsilon}\Big)
  \end{align*}
  for any $\eta \geq \epsilon>0$. Thus, for $\eta_n = \sqrt{\log n /
    n}$, we have, for all $0<\epsilon \leq \eta_n$
  \begin{equation*}
    \log N(B(S_2),\norm{\cdot}_{\infty,n},\epsilon) \leq c_3
    n \log \Big( \frac{3 \eta_n}{\epsilon} \Big).
  \end{equation*}Since $B(S_2)$ is the unit ball of a Hilbert space, it is $2$-convex. We can thus apply Theorem~\ref{theo:complexity-Talagrand-2convexBodies} to obtain the following upper bound
  \begin{equation*}
    \gamma_2(r B(S_2), \norm{\cdot}_{\infty,n})\leq c_4 r \log n.    
  \end{equation*}
  
 Now, we prove an upper bound on the complexity of $B_1$ with respect to $\norm{\cdot}_{\infty,n}$. Recall that $\ve : \cM_{m, T} \rightarrow \R^{m T}$ 
  concatenates the columns of a matrix into a single vector of size $m
  T$.  Obviously, $\ve$ is an isometry between $(\cM_{m, T},
  \norm{\cdot}_{S_2})$ and $(\R^{mT}, |\cdot|_{2})$, since
  $\prodsca{A}{B} = \prodsca{\ve(A)}{\ve(B)}$. Using this mapping, we
  see that, for any $\epsilon>0$,
  \begin{equation*}
    N(B_1, \norm{\cdot}_{\infty,n}, \epsilon) = N(b_1^{mT},
    |\cdot|_{\infty,n}, \epsilon)
  \end{equation*}
  where $b_1^{mT}$ is the unit ball of $\ell_1^{mT}$ and
  $|\cdot|_{\infty,n}$ is the pseudo norm on $\R^{mT}$ defined for any $x\in\R^{mT}$ by
  $|x|_{\infty,n} = \max_{1 \leq i \leq n}|\inr{y_i, x}|$ where
  $y_i={\rm vec}(X_i)$ for $i = 1, \ldots, n$. Note that $y_1, \ldots,
  y_n \in b_2^{mT}$, where $b_2^{mT}$ is the unit ball of
  $\ell_2^{mT}$. We use the Carl-Maurey's empirical method to compute
  the covering number $N(b_1^{mT},|\cdot|_{\infty, n},\epsilon)$ for 
  ``large scales'' of $\epsilon$ and the volumetric argument for ``small scales''. Let us begin with the
  Carl-Maurey's argument. Let $x \in b_1^{mT}$ and $Z$ be a random
  variable with values in $\{\pm e_1,\ldots,\pm e_{mT},0\}$ - where
  $(e_1, \ldots, e_{mT})$ is the canonical basis of $\R^{mT}$ - defined
  by $\P[Z = 0] = 1- |x|_1$ and for all $i=1,\ldots,mT$,
  \begin{equation*}
    \P[Z={\rm sign}(x_i)e_i] = |x_i|.
  \end{equation*}
  Note that $\E Z = x$. Let $s\in\N-\{0\}$ to be defined later and
  take $s$ i.i.d. copies of $Z$ denoted by $Z_1,\ldots,Z_s$. By the
  Gin{\'e}-Zinn symmetrization argument and the fact that Rademacher
  processes are upper bounded by Gaussian processes, we have
  \begin{equation}\label{eq:CM-argument1}
    \E \Big| \frac{1}{s} \sum_{i=1}^s Z_i - \E Z \Big|_{\infty, n} \leq c_0
    \E \Big| \frac{1}{s} \sum_{i=1}^s g_i Z_i \Big|_{\infty, n} \leq
    c_1 \sqrt{\frac{\log n}{s}}
  \end{equation}
  where the last inequality follows by a Gaussian maximal inequality
  and the fact that $|y_i|_2 \leq 1$. Take $s \in \N$ to be the
  smallest integer such that $\epsilon\geq c_1\sqrt{(\log
    n)/s}$. Then, the set
  \begin{equation}
    \label{eq:set-Maurey}
    \Big\{ \frac{1}{s} \sum_{i=1}^sz_i : z_1,\ldots,z_s \in \{\pm e_1,
    \ldots, \pm e_{mT}, 0\} \Big\}
  \end{equation}
  is an $\epsilon$-net of $b_1^{mT}$ with respect to $|\cdot|_{\infty,
    n}$. Indeed, thanks to (\ref{eq:CM-argument1}) there exists $\omega\in\Omega$ such that $|s^{-1}\sum_{i=1}^s Z_i(\omega)-x|_{\infty,n}\leq\epsilon$. This implies that there exists an element in the set (\ref{eq:set-Maurey}) which is $\epsilon$-close to $x$. Since the cardinality of the set introduced in (\ref{eq:set-Maurey}) is, according to \cite{MR810669}, at most
  \begin{equation*}
    \binom{2mT+s-1}{s}
    \leq \Big( \frac{e(2mT + s - 1)}{s} \Big)^s,
  \end{equation*}
  we obtain for any $\epsilon \geq \eta_n:= \big((\log n)(\log mT)/n\big)^{1/2} $ that
  \begin{equation*}
    \log N(b_1^{mT}, |\cdot|_{\infty, n} ,\epsilon)\leq s\log\Big( \frac{e(2mT + s - 1)}{s} \Big) \leq  \frac{c_2(\log n)  \log(mT)}{\epsilon^2},
  \end{equation*}
  and a volumetric argument gives
  \begin{equation*}
    \log N(b_1^{mT}, |\cdot|_{\infty, n} ,\epsilon) \leq c_3 n \log\Big(
    \frac{3 \eta_n}{\epsilon} \Big)
  \end{equation*}
  for any $0 < \epsilon \leq \eta_n$. Now we use the upper bound (\ref{eq:Dudley-entropy}) and compute the Dudley's entropy integral to obtain
  \begin{equation*}
    \gamma_2(rB_1, \norm{\cdot}_{\infty,n}) \leq c_4 r (\log n)^{3/2}
    \sqrt{\log(mT)}.
  \end{equation*}
 
 For the  ``matrix completion case'', we have 
 \begin{equation*}
   N(b_1^{mT},|\cdot|_{\infty,n},\epsilon)\leq N(b_1^n,\epsilon b_\infty^n)
 \end{equation*}where $ N(b_1^n,\epsilon b_\infty^n)$ is the minimal number of balls $\epsilon b_\infty^n$ needed to cover $b_1^n$. We use the following proposition from~\cite{MR732693} to compute $N(b_1^n,\epsilon b_\infty^n)$.
  \begin{proposition}[\cite{MR732693}]
    \label{prop:schutt}For any $\epsilon>0$, we have
    \begin{equation*}
      \log N(b_1^n,\epsilon b_\infty^n) \sim
      \begin{cases}
        0  & \text{ if } \epsilon \geq 1 \\
        \epsilon^{-1} \log \big(e n
        \epsilon \big) & \text{ if }
        n^{-1} \leq \epsilon\leq 1 \\
        n \log \big( 1/(\epsilon n) \big) &
        \text{ if } 0 < \epsilon \leq n^{-1}.
      \end{cases}
    \end{equation*}
  \end{proposition}
Then the result follows from (\ref{eq:Dudley-entropy}) and the computation of the Dudley's entropy integral using Proposition~\ref{prop:schutt}.
\end{proof}

\subsection{Computation of the isomorphic function}
\label{sec:Computation-of-the-isomorphic-function}
Introduce the ellipsoid
\begin{equation*}
  D := \{ A \in \cM_{m, T} : \E \prodsca{X}{A}^2 \leq 1 \}.
\end{equation*}
A consequence of Equation~(\ref{eq:one-side-bern1}) in
Lemma~\ref{lem:convexity} is the following inclusion, of importance in
what follows. Indeed, since $B_r$ is convex and symmetrical, one has:
\begin{equation}\label{eq:Inclusion-excess-loss-class-to-class}
  \{ A \in B_r : \E \cL_{r,A}\leq \lambda \} \subset
  A_r^* + K_{r, \lambda}, 
\end{equation}
where 
\begin{equation*}
  K_{r, \lambda} := 2 B_r \cap \sqrt \lambda D.
\end{equation*}
Hence, the complexity of $\{ A \in \cM_{m, T} : \cL_{r, A}
\in \cL_{r, \lambda} \}$ will be smaller than the complexity  of $B_r$
and $\sqrt \lambda D$. This will be of importance in the analysis
below. The next result provides an upper bound on the complexity of
$V_{r, \lambda}$ where we recall that
\begin{equation*}
  V_{r, \lambda} := \{\alpha \cL_{r,A} : 0 \leq \alpha \leq 1, A \in
  B_r, \E(\alpha \cL_{r,A}) \leq \lambda\}.
\end{equation*} 
From this statement we will derive corollaries that provide the shape
of the considered penalty functions.
\begin{proposition}
  \label{prop:penalty}
  There exists two absolute constants $c_1$ and $c_2$ such that the
  following holds. Let Assumptions~\ref{ass:X} and~\ref{ass:Y}
  hold. For any $r > 0$ and $\lambda>0$, we have
  \begin{equation*}
    \E\norm{P - P_n}_{V_{r,\lambda}} \leq c_1 \sum_{i \geq 0} 2^{-i}
    \phi_{n}(r, 2^{i+1} \lambda),
  \end{equation*}where
  \begin{equation*}
    \phi_n(r,\lambda):=c_2 \Big( U_n(K_{r, \lambda}) \sqrt \frac{\lambda}{n} +
    U_n(K_{r, \lambda}) \sqrt \frac{ R(A_r^*)}{n} +
    \frac{U_n(K_{r, \lambda})^2}{n} \Big),
  \end{equation*}for $K_{r,\lambda}=2B_r\cap\sqrt{\lambda}D$.
\end{proposition}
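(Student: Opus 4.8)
The plan is to bound $\E\norm{P - P_n}_{V_{r,\lambda}}$ via a peeling argument over the scale $\alpha$ of the functions $\alpha\cL_{r,A}\in V_{r,\lambda}$, reducing everything to the control of an empirical process indexed by the convex localized set $K_{r,\lambda} = 2B_r\cap\sqrt\lambda D$. First I would note that any $g\in V_{r,\lambda}$ is of the form $g=\alpha\cL_{r,A}$ with $\E(\alpha\cL_{r,A})\le\lambda$; by Lemma~\ref{lem:convexity}, $\E\cL_{r,A}\ge\E\inr{X,A-A_r^*}^2$, so $A$ lies (after rescaling by $\alpha$) in a dilate of $K_{r,\lambda}$. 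More precisely, one slices $V_{r,\lambda}$ into the pieces where $\E\cL_{r,A}\in(2^{i}\lambda/\alpha^{?}\ldots]$ — the cleanest version is to write, for $g=\alpha\cL_{r,A}$, that $g$ is a scalar multiple of $\cL_{r,A}$ with $A\in\{A'\in B_r:\E\cL_{r,A'}\le 2^{i+1}\lambda\}$ for the appropriate dyadic block $i\ge0$, and then $\E\norm{P-P_n}_{V_{r,\lambda}}\le c_1\sum_{i\ge0}2^{-i}\E\norm{P-P_n}_{W_{r,2^{i+1}\lambda}}$ where $W_{r,\mu}=\{\cL_{r,A}:A\in B_r,\ \E\cL_{r,A}\le\mu\}$. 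This is where the $\sum_{i\ge0}2^{-i}\phi_n(r,2^{i+1}\lambda)$ shape comes from; the factor $2^{-i}$ is gained because a function with large expectation gets down-weighted by its scaling coefficient $\alpha$.

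Next I would estimate $\E\norm{P-P_n}_{W_{r,\mu}}$ for fixed $\mu$. Using the inclusion~\eqref{eq:Inclusion-excess-loss-class-to-class}, every such $A$ satisfies $A\in A_r^*+K_{r,\mu}$, so I write $\cL_{r,A}=2(Y-\E(Y|X))\inr{X,A-A_r^*}+(2\E(Y|X)-\inr{X,A+A_r^*})\inr{X,A-A_r^*}$ via~\eqref{eq:excess-decomposition} and symmetrize. By the Gin\'e–Zinn symmetrization inequality and the contraction principle, $\E\norm{P-P_n}_{W_{r,\mu}}$ is controlled by a Rademacher (hence, up to an absolute constant, Gaussian) complexity of the set $\{\inr{X_\cdot,B}:B\in K_{r,\mu}\}$, with multiplier weights coming from $Y-\E(Y|X)$ and from $2\E(Y|X)-\inr{X,A+A_r^*}$. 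The first, linear-in-noise term produces, after conditioning on $(X_i)$ and using the subgaussian-process bound Theorem~\ref{theo:Generic-chaining-sous-gaussien} with the metric $\norm{\cdot}_{\infty,n}$, a term of order $\gamma_2(K_{r,\mu},\norm{\cdot}_{\infty,n})/\sqrt n$ times a scale; splitting the scale using $\E\inr{X,B}^2\le\mu$ (for $B\in K_{r,\mu}\subset\sqrt\mu D$) and $R(A_r^*)$ gives exactly the $U_n(K_{r,\mu})\sqrt{\mu/n}+U_n(K_{r,\mu})\sqrt{R(A_r^*)/n}$ contributions, with $U_n(K_{r,\mu})$ denoting the relevant $\gamma_2$ or expected-sup-norm complexity of $K_{r,\mu}$ for $\norm{\cdot}_{\infty,n}$. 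The quadratic term $\inr{X,A-A_r^*}^2$ is handled by a further contraction (the square is Lipschitz on the bounded set $K_{r,\mu}$, where $|\inr{X,B}|\le C_r$ or more simply $\le\norm{B}_{\infty,n}$-bounded), producing the $U_n(K_{r,\mu})^2/n$ term. Summing the three contributions gives $\phi_n(r,\mu)$, and then combining with the peeling bound from the first step yields the proposition.

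The main obstacle is the peeling/localization bookkeeping: one must ensure that slicing $V_{r,\lambda}$ by the size of $\E\cL_{r,A}$ and accounting for the star-shaped scaling coefficient $\alpha$ genuinely produces the convergent geometric series $\sum_i 2^{-i}\phi_n(r,2^{i+1}\lambda)$ rather than losing a logarithmic factor, and that at each scale the reduction to $K_{r,2^{i+1}\lambda}$ via~\eqref{eq:Inclusion-excess-loss-class-to-class} is valid (this uses crucially the one-sided Bernstein inequality $\E\cL_{r,A}\ge\E\inr{X,A-A_r^*}^2$ from Lemma~\ref{lem:convexity} together with convexity and symmetry of $B_r$). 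The other delicate point is the multiplier-process step: the factor $Y-\E(Y|X)$ is only $\psi_2$ (not bounded), so one cannot naively apply the bounded contraction principle; instead I would condition on $(X_i,Y_i)$, invoke the subgaussian chaining bound $\E\sup|Z_f-Z_g|\le c\,\gamma_2(F,d)$ conditionally (the process $B\mapsto n^{-1}\sum_i\eps_i(Y_i-\E(Y_i|X_i))\inr{X_i,B}$ is conditionally subgaussian in $\norm{\cdot}_{\infty,n}$ with a multiplicative $b_{Y,\psi_2}$-type constant), and only then take expectation over $(X_i)$. Everything else — the contraction for the quadratic part, splitting $\sqrt{\E\inr{X,B}^2+R(A_r^*)}$, re-inserting constants from Assumptions~\ref{ass:X}–\ref{ass:Y} — is routine.
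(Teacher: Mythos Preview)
Your peeling step and the overall architecture (reduce $V_{r,\lambda}$ to the non-star-shaped slices $W_{r,\mu}=\cL_{r,\mu}$, then symmetrize, then chain in the $\norm{\cdot}_{\infty,n}$ metric) match the paper exactly. The divergence is in how you treat $\E\norm{P-P_n}_{\cL_{r,\mu}}$, and there the accounting is off and one key tool is missing.

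The paper does \emph{not} decompose $\cL_{r,A}$ via~\eqref{eq:excess-decomposition}. It keeps the whole Rademacher process $Z_A=\sum_i\eps_i\cL_{r,A}(X_i,Y_i)$ and computes its conditional increment directly:
\[
\E_\eps|Z_A-Z_{A'}|^2 \le 8\norm{A-A'}_{\infty,n}^2\Big(\sum_i(Y_i-\inr{X_i,A_r^*})^2+\sup_{B\in K_{r,\mu}}\sum_i\inr{X_i,B}^2\Big).
\]
Generic chaining then gives $\gamma_2(K_{r,\mu},\norm{\cdot}_{\infty,n})$ times the square root of the bracket, Cauchy--Schwarz separates the two factors in expectation, and the crucial remaining step is the \emph{quadratic empirical process bound} (Theorem~1.2 of \cite{MR2321621}):
\[
\E\sup_{B\in K_{r,\mu}}\frac1n\sum_i\inr{X_i,B}^2\;\le\;\mu+c\max\Big(\sqrt{\tfrac{\mu}{n}}\,U_n(K_{r,\mu}),\ \frac{U_n(K_{r,\mu})^2}{n}\Big).
\]
Both the $U_n\sqrt{\mu/n}$ term and the $U_n^2/n$ term in $\phi_n$ come from \emph{this} estimate, not from the places you indicate.

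In your route, the linear-in-noise piece $2(Y-\E(Y|X))\inr{X,B}$ (or equivalently $2(Y-\inr{X,A_r^*})\inr{X,B}$) after symmetrization and chaining contributes only $U_n\sqrt{R(A_r^*)/n}$; the multiplier scale there is $(\tfrac1n\sum_i(Y_i-\inr{X_i,A_r^*})^2)^{1/2}$, which has nothing to do with $\E\inr{X,B}^2\le\mu$. So your claim that ``splitting the scale using $\E\inr{X,B}^2\le\mu$'' yields $U_n\sqrt{\mu/n}$ from that term is incorrect. Likewise, a naive Lipschitz contraction on the quadratic piece $\inr{X,B}^2$ produces a factor $\sup_B\norm{B}_{\infty,n}$ (or $C_r$), not $U_n$, so it does not give $U_n^2/n$ as stated; to close the argument you are forced back to controlling $\E\sup_B\tfrac1n\sum_i\inr{X_i,B}^2$, i.e.\ exactly the missing quadratic empirical process bound above. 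Once you insert that ingredient, either your decomposition route or the paper's unified increment computation goes through, but without it the proof has a genuine gap.
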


\begin{proof}
  Introduce $\cL_{r,\lambda}=\{\cL_{r,A}: A \in B_r,
  \E\cL_{r,A}\leq\lambda\}$. Using the Gin\'e-Zinn symmetrization
  \cite{MR757767} and the inclusion
  of~\eqref{eq:Inclusion-excess-loss-class-to-class}, one has, for any
  $r>0$ and $\lambda>0$,
  \begin{equation*}
    \E \norm{P - P_n}_{\cL_{r, \lambda}} \leq \E \E_\epsilon \frac 2n
    \sup_{A \in A_r^* + K_{r, \lambda} }  \Big|\sum_{i=1}^n \epsilon_i \cL_{r,
      A}(X_i, Y_i)\Big|,
  \end{equation*}
  where $\epsilon_1,\ldots,\epsilon_n$ are $n$ i.i.d Rademacher
  variables. Introduce the Rademacher process $Z_A :=
  \sum_{i=1}^n \epsilon_i \cL_{r, A}(X_i, Y_i)$, and note that for any
  $A, A' \in A_r^* + K_{r, \lambda}$:
  \begin{align*}
    \E_\epsilon |Z_A - Z_{A'}|^2 &= \sum_{i=1}^n \prodsca{X_i}{A - A'}^2 (2
    Y_i - \prodsca{X_i}{A + A'})^2 \\
    &= 4 \sum_{i=1}^n \prodsca{X_i}{A - A'}^2 (Y_i -
    \prodsca{X_i}{A_r^*} - \prodsca{X_i}{\frac{A + A'}{2} - A_r^*})^2 \\
    &\leq 8 \norm{A - A'}_{n, \infty}^2 \Big( \sum_{i=1}^n
    (Y_i-\inr{X_i,A_r^*})^2 + \sup_{A \in K_{r, \lambda}} \sum_{i=1}^n
    \prodsca{X_i}{A}^2 \Big),
  \end{align*}
  where we recall that $\norm{A}_{n, \infty} = \max_{i=1, \ldots, n}
  |\prodsca{X_i}{A}|$.  So, using the generic chaining mechanism
  (cf. Theorem~\eqref{theo:Generic-chaining-sous-gaussien}), we obtain
  \begin{align*}
    \E \norm{P - P_n}_{\cL_{r, \lambda}} &\leq \frac{c}{n} \E \Big[
    \gamma_2(K_{r, \lambda}, \norm{\cdot}_{n, \infty}) \Big(
    \sum_{i=1}^n (Y_i-\inr{X_i,A_r^*})^2 + \sup_{A \in K_{r, \lambda}}
    \sum_{i=1}^n \prodsca{X_i}{A}^2 \Big)^{1/2} \Big]  \\
    &\leq \frac{c}{\sqrt n} ( \E \gamma_2(K_{r, \lambda},
    \norm{\cdot}_{n, \infty})^2)^{1/2} \Big( R(A_r^*) + \E
    \sup_{A \in K_{r, \lambda}} \frac1n \sum_{i=1}^n
    \prodsca{X_i}{A}^2 \Big)^{1/2}.
  \end{align*}
  Now, introduce, for some set $K \subset \cM_{m, T}$ the functional
  \begin{equation*}
    U_n(K) := ( \E \gamma_2(K, \norm{\cdot}_{n, \infty})^2)^{1/2}.
  \end{equation*}
  Using Theorem~1.2 from \cite{MR2321621}, we obtain:
  \begin{equation*}
    \E \sup_{A \in K_{r, \lambda}} \frac1n \sum_{i=1}^n \prodsca{X_i}{A}^2
    \leq  \lambda + c \max\Big( \sqrt{\frac{\lambda}{n}} U_n(K_{r, \lambda}),
    \frac{U_n(K_{r, \lambda})}{n}^2 \Big),
  \end{equation*}
  and so, we arrive at
  \begin{equation*}
    \E \norm{P - P_n}_{\cL_{r, \lambda}} \leq c \phi_n(r, \lambda),
  \end{equation*}
  where
  \begin{align*}
    \phi_n(r, \lambda) &:= c\frac{U_n(K_{r, \lambda})}{\sqrt n} \Big(
    \lambda + R(A_r^*) + \frac{\sqrt \lambda U_n(K_{r,
        \lambda})}{\sqrt n} + \frac{U_n(K_{r, \lambda})^2}{n}
    \Big)^{1/2} \\
    &\leq c \Big( U_n(K_{r, \lambda}) \sqrt \frac{\lambda}{n} +
    U_n(K_{r, \lambda}) \sqrt \frac{ R(A_r^*)}{n} +
    \frac{U_n(K_{r, \lambda})^2}{n} \Big).
  \end{align*} 
  We conclude with the peeling argument provided in Lemma~4.6 of
  \cite{Mendelson08regularizationin}:
  \begin{equation*}
    \E\norm{P-P_n}_{V_{r,\lambda}} \leq c\sum_{i\geq0} 2^{-i}
    \E\norm{P-P_n}_{\cL_{r,2^{i+1}\lambda}}.     \qedhere
  \end{equation*}
\end{proof}

Now, we can derive the following corollary. It gives several upper
bounds for $\E\norm{P - P_n}_{V_{r,\lambda}}$, depending on what $B_r$
is (i.e. which penalty function is used).

\begin{corollary}[$\norm{\cdot}_{S_1}$ penalization]
  \label{coro:iso-function-S1}
  Let Assumptions~\ref{ass:X} and~\ref{ass:Y} hold and assume that
  $B_r = B_{r, 1, 0, 0}$ for $r > 0$, see~\eqref{eq:Br}.  Then, we
  have
  \begin{equation*}
    \E\norm{P - P_n}_{V_{r,\lambda_1(r)}} \leq \frac{\lambda_1(r)}{8}
  \end{equation*}
  for any $r > 0$, where
  \begin{equation*}
    \lambda_1(r) = c \Big( \frac{b_{X, 2}^2 r^2 (\log n)^2}{n}
    + \frac{b_{X, 2} b_Y r \log n }{\sqrt n} \Big).
  \end{equation*}
\end{corollary}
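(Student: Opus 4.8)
The plan is to combine the general bound of Proposition~\ref{prop:penalty} with the Schatten-ball complexity estimate of Proposition~\ref{prop:complexity}. Since here $r_1=1$ and $r_2=r_3=0$, the ball $B_r = B_{r,1,0,0}$ equals $r\,B(S_1)$, so the truncated set $K_{r,\lambda} = 2B_r \cap \sqrt\lambda D$ is contained in $2r\,B(S_1)$ \emph{for every} $\lambda>0$. This is what makes the pure nuclear-norm case particularly simple: the ellipsoidal factor $\sqrt\lambda D$ in $K_{r,\lambda}$ can be discarded, so $U_n(K_{r,\lambda})$ will be controlled by a quantity independent of $\lambda$, and the desired inequality becomes a plain inequality in $\lambda$ rather than a genuine fixed-point problem.

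The first step is to estimate $U_n(K_{r,\lambda}) = (\E\,\gamma_2(K_{r,\lambda},\norm{\cdot}_{\infty,n})^2)^{1/2}$. Under Assumption~\ref{ass:X} one has $\norm{X_i}_{S_2}\le b_{X,2}$ almost surely; rescaling $X_i \mapsto X_i/b_{X,2}$ multiplies the metric $\norm{\cdot}_{\infty,n}$ by $b_{X,2}$ and leaves matrices whose $S_2$-norm is at most one, so Proposition~\ref{prop:complexity} gives $\gamma_2(2r\,B(S_1),\norm{\cdot}_{\infty,n}) \le c\,b_{X,2}\,r\log n$ almost surely, whence $U_n(K_{r,\lambda}) \le U := c\,b_{X,2}\,r\log n$ uniformly in $\lambda$. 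Next I would bound the oracle term: since $A_r^* \in \argmin_{A\in B_r} R(A)$ and $0\in B_r$, we get $R(A_r^*)\le R(0)=\E Y^2 \le b_Y^2$ by Assumption~\ref{ass:Y}. Plugging these two bounds into the expression for $\phi_n$ in Proposition~\ref{prop:penalty} yields
\begin{equation*}
  \phi_n(r,\lambda) \le c\Big( U\sqrt{\tfrac{\lambda}{n}} + \tfrac{U b_Y}{\sqrt n} + \tfrac{U^2}{n}\Big).
\end{equation*}
Since $\sum_{i\ge0}2^{-i}\sqrt{2^{i+1}}$ and $\sum_{i\ge0}2^{-i}$ are finite absolute constants, summing the geometric series in Proposition~\ref{prop:penalty} gives
\begin{equation*}
  \E\norm{P_n-P}_{V_{r,\lambda}} \le c_3\Big( U\sqrt{\tfrac{\lambda}{n}} + \tfrac{U b_Y}{\sqrt n} + \tfrac{U^2}{n}\Big).
\end{equation*}

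It remains to choose $\lambda=\lambda_1(r)$ so that the right-hand side is at most $\lambda_1(r)/8$. This amounts to solving an inequality of the form $a\sqrt\lambda + b \le \lambda/(8c_3)$ with $a = c_3 U/\sqrt n$ and $b = c_3(U b_Y/\sqrt n + U^2/n)$; viewing it as a quadratic inequality in $\sqrt\lambda$ shows that it holds for all $\lambda$ at least a suitable absolute multiple of $a^2+b$, that is for all
\begin{equation*}
  \lambda \ge c'\Big(\tfrac{U^2}{n} + \tfrac{U b_Y}{\sqrt n}\Big) = c''\Big(\tfrac{b_{X,2}^2 r^2(\log n)^2}{n} + \tfrac{b_{X,2} b_Y\, r\log n}{\sqrt n}\Big),
\end{equation*}
which is precisely the claimed $\lambda_1(r)$ once the absolute constants are absorbed. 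The only mildly delicate points are the normalization step needed to invoke Proposition~\ref{prop:complexity} with the general constant $b_{X,2}$ rather than $1$, and the observation that the $\lambda$-dependence of $U_n(K_{r,\lambda})$ may be dropped (so that one really solves an inequality in $\lambda$, not a fixed-point equation); I expect the bookkeeping of absolute constants through the geometric sum and the quadratic-in-$\sqrt\lambda$ inequality to be the main, though entirely routine, effort.
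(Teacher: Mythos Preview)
Your proposal is correct and follows essentially the same route as the paper's own proof: discard the ellipsoid factor via $K_{r,\lambda}\subset 2rB(S_1)$, apply Proposition~\ref{prop:complexity} (with the $b_{X,2}$-rescaling) to bound $U_n(K_{r,\lambda})$ independently of $\lambda$, use $R(A_r^*)\le b_Y^2$, sum the geometric series from Proposition~\ref{prop:penalty}, and solve the resulting inequality in $\lambda$. The only cosmetic difference is that the paper phrases the last step as ``$\lambda\ge c\,\phi_{n,1}(r,\lambda)$'' rather than explicitly as a quadratic in $\sqrt\lambda$, but this is the same computation.
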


\begin{proof}
  If $B_r = r B(S_1)$, we have using the embedding $K_{r, \lambda}
  \subset 2 B_r$ and Proposition~\ref{prop:complexity} that $U_n(K_{r,
    \lambda}) \leq c b_{X, 2} r \log n$, so
  \begin{align*}
    \phi_n&(r, \lambda) \leq c \Big( b_{X, 2} r \log n \sqrt
    \frac{\lambda}{n} + b_{X, 2} r \log n \sqrt \frac{R(A_r^*)}{n} +
    \frac{b_{2, X}^2 r^2 (\log n)^2}{n} \Big) =: c \phi_{n, 1}(r, x).
  \end{align*}  
  Hence, using Proposition~\ref{prop:penalty} we obtain
  \begin{align*}
    \E\norm{P - P_n}_{V_{r,\lambda}} &\leq c \sum_{i \geq 0} 2^{-i}
    \phi_{n, 1}(r, 2^{i+1} \lambda) \leq c \phi_{n, 1}(r, \lambda),
  \end{align*}
  where we used the fact that the sum is comparable to its first term
  because of the exponential decay of the summands.  Thus, one has
  $\E\norm{P - P_n}_{V_{r,\lambda}} \leq \lambda / 8$ when $\lambda
  \geq c \phi_{n, 1}(r, \lambda)$. In particular, since $R(A_r^*) \leq
  \E Y^2 \leq b_Y^2$ (see Assumption~\ref{ass:Y}), for values of
  $\lambda$ such that
  \begin{equation*}
    \lambda \geq c \Big( \frac{b_{X, 2}^2 r^2 (\log n)^2}{n}
    + \frac{b_{X, 2} b_Y r \log n }{\sqrt n} \Big),
  \end{equation*} 
  we have $\E\norm{P - P_n}_{V_{r,\lambda}} \leq \lambda / 8$, which
  proves the Corollary.
\end{proof}  

\begin{corollary}[$\norm{\cdot}_{S_1} + \norm{\cdot}_1$ penalization]
  \label{coro:iso-function-S1L1}
  Let Assumptions~\ref{ass:X} and~\ref{ass:Y} hold and assume that
  $B_r = B_{r, r_1, 0, r_3}$ for $r, r_1, r_3 > 0$,
  see~\eqref{eq:Br}. Then, we have
  \begin{equation*}
    \E\norm{P - P_n}_{V_{r, \lambda_{r_1, 0, r_3}(r)}} \leq
    \frac{\lambda_{r_1, 0, r_3}(r)}{8}
  \end{equation*}
  for any $r > 0$, where
  \begin{equation*}
    \lambda_{r_1, 0, r_3}(r) = c \Big[ \Big( \frac{1}{r_1^2}
    \wedge \frac{\log(mT)}{r_3^2} \Big) \frac{b_{X, 2}^2 r^2 (\log
      n)^2}{n} + \Big( \frac{1}{r_1} \wedge
    \frac{\sqrt{\log(mT)}}{r_3} \Big) \frac{b_{X, 2} b_Y r
      (\log n)^{3/2}}{\sqrt n} ) \Big]. 
  \end{equation*}
\end{corollary}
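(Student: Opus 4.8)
\emph{Proof plan.} The argument will mirror step by step the proof of Corollary~\ref{coro:iso-function-S1}; the only genuinely new point is that the ball $B_r = B_{r, r_1, 0, r_3}$ is now contained in \emph{two} rescaled classical balls simultaneously. First I would apply Proposition~\ref{prop:penalty}, which reduces the task to bounding $U_n(K_{r,\lambda})$, where $K_{r,\lambda} = 2B_r \cap \sqrt\lambda D$. Exactly as in Corollary~\ref{coro:iso-function-S1}, I discard the ellipsoidal constraint and keep only $K_{r,\lambda} \subset 2 B_r$. From the definition~\eqref{eq:Br} of $B_{r,r_1,0,r_3}$, dropping in turn each of the two nonnegative terms $r_3\norm{A}_1$ and $r_1\norm{A}_{S_1}$, one reads off the inclusions $B_r \subset (r/r_1)\, B(S_1)$ and $B_r \subset (r/r_3)\, B_1$, hence
\begin{equation*}
  K_{r,\lambda} \subset 2\Big( \tfrac{r}{r_1} B(S_1) \;\cap\; \tfrac{r}{r_3} B_1 \Big).
\end{equation*}

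Next, using the monotonicity of $\gamma_2$ under inclusion and Proposition~\ref{prop:complexity} applied to \emph{each} of the two balls, I would bound $U_n(K_{r,\lambda})$ by the \emph{minimum} of the two resulting complexity estimates. To meet the normalization $\norm{X_i}_{S_2}, \norm{X_i}_\infty \le 1$ required by Proposition~\ref{prop:complexity} it suffices to rescale the $X_i$ by $b_{X,2}$, since $\norm{X}_\infty \le \norm{X}_{S_2} \le b_{X,2}$ almost surely (Assumption~\ref{ass:X}); this is exactly why only $b_{X,2}$, and not $b_{X,\ell_\infty}$, enters the final bound, and since the resulting complexity bounds are deterministic the expectation defining $U_n$ causes no trouble. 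This yields
\begin{equation*}
  U_n(K_{r,\lambda}) \;\le\; c\, b_{X,2}\, r \Big( \frac{\log n}{r_1} \wedge \frac{(\log n)^{3/2}\sqrt{\log(mT)}}{r_3} \Big).
\end{equation*}
Feeding this into the expression for $\phi_n(r,\lambda)$ from Proposition~\ref{prop:penalty}, and bounding $R(A_r^*) \le R(0) = \E Y^2 \le b_Y^2$ by Assumption~\ref{ass:Y} (recall $0 \in B_r$), I obtain $\phi_n(r,\lambda) \le c\big( a_n \sqrt{\lambda/n} + a_n b_Y/\sqrt n + a_n^2/n \big)$ with $a_n := U_n(K_{r,\lambda})$.

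Finally I would close the argument as in Corollary~\ref{coro:iso-function-S1}: the peeling sum $\sum_{i\ge0} 2^{-i}\phi_n(r, 2^{i+1}\lambda)$ is comparable to its first term, since $\lambda\mapsto\phi_n(r,\lambda)$ grows at most like $\sqrt\lambda$, so that $\E\norm{P-P_n}_{V_{r,\lambda}} \le c\,\phi_n(r,\lambda)$; and the isomorphic condition $\E\norm{P-P_n}_{V_{r,\lambda}}\le\lambda/8$ is guaranteed once $\lambda \ge c\,\phi_n(r,\lambda)$, which (splitting $a_n\sqrt{\lambda/n}\le \lambda/2 + c' a_n^2/n$) holds as soon as $\lambda \gtrsim a_n^2/n + a_n b_Y/\sqrt n$. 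Substituting the bound on $a_n$ and collecting powers of $\log n$ and $\sqrt{\log(mT)}$ in the two regimes of the minimum produces the announced $\lambda_{r_1,0,r_3}(r)$, the factor $\tfrac1{r_1^2}\wedge\tfrac{\log(mT)}{r_3^2}$ coming from the $a_n^2/n$ term and $\tfrac1{r_1}\wedge\tfrac{\sqrt{\log(mT)}}{r_3}$ from the cross term. The main point requiring care is precisely this bookkeeping of the logarithmic factors — the $B(S_1)$ regime contributing a $\log n$ and the $B_1$ regime a $(\log n)^{3/2}\sqrt{\log(mT)}$, because the complexities of the nuclear-norm ball and of the $\ell_1$-ball scale differently in $\log n$ and in the dimension; everything else is identical to the pure $\norm{\cdot}_{S_1}$ computation.
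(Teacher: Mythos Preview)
Your proposal is correct and follows exactly the route the paper intends: the paper's own proof of this corollary is the single line ``The proof follows the same steps as the proof of Corollary~\ref{coro:iso-function-S1}'', and what you have written is precisely that argument spelled out, with the one new observation that $B_{r,r_1,0,r_3}\subset (r/r_1)B(S_1)\cap (r/r_3)B_1$ lets you bound $U_n(K_{r,\lambda})$ by the minimum of the two complexities from Proposition~\ref{prop:complexity}. Your handling of the normalization via $\norm{X}_\infty\le\norm{X}_{S_2}\le b_{X,2}$, of the peeling sum, and of the fixed-point inequality $\lambda\ge c\,\phi_n(r,\lambda)$ all match the paper's treatment in the $\norm{\cdot}_{S_1}$ case.
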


\begin{proof}
  The proof follows the same steps as the proof of
  Corollary~\ref{coro:iso-function-S1}.
\end{proof}

\begin{corollary}[$\norm{\cdot}_{S_1} + \norm{\cdot}_{S_2}^2$
  penalization]
  \label{coro:iso-function-S1S2}
  Let Assumptions~\ref{ass:X} and~\ref{ass:Y} hold and assume that
  $B_r = B_{r, r_1, r_2, 0}$ for $r, r_1, r_2 > 0$,
  see~\eqref{eq:Br}. Then, we have
  \begin{equation*}
    \E\norm{P - P_n}_{V_{r,\lambda_{r_1, r_2}(r)}} \leq \frac{\lambda_{r_1,r_2}(r)}{8}
  \end{equation*}
  for any $r > 0$, where
  \begin{equation*}
    \lambda_{r_1, r_2}(r) = c \Big( \frac{b_{X, 2}^2 r (\log n)^2}{r_2 n}
    + \frac{b_{X, 2} b_Y r \log n}{r_1 \sqrt n} \Big).
  \end{equation*}
\end{corollary}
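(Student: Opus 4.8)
The plan is to follow, essentially verbatim, the proof of Corollary~\ref{coro:iso-function-S1}: I will bound the functional $U_n(K_{r,\lambda})$ for $K_{r,\lambda}=2B_r\cap\sqrt\lambda D$, substitute this into the bound for $\phi_n(r,\lambda)$ supplied by Proposition~\ref{prop:penalty}, run the geometric peeling sum of that same proposition to pass to $\E\norm{P-P_n}_{V_{r,\lambda}}$, and finally solve the scalar fixed-point inequality $\lambda\ge c\,\phi_n(r,\lambda)$. The one genuinely new feature, relative to the pure nuclear-norm case, is that the elastic-net constraint $r_1\norm{A}_{S_1}+r_2\norm{A}_{S_2}^2\le r$ gives \emph{two} inclusions at once, namely $B_r\subset (r/r_1)\,B(S_1)$ and $B_r\subset\sqrt{r/r_2}\,B(S_2)$. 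Combined with $K_{r,\lambda}\subset 2B_r$, the degree-one homogeneity of $U_n$, and Proposition~\ref{prop:complexity} (applied after the harmless rescaling $X_i\mapsto X_i/b_{X,2}$, legitimate since $\norm{X_i}_{S_2}\le b_{X,2}$ a.s.\ by Assumption~\ref{ass:X}, and using $B(S_1)\subset B(S_2)$ for the first estimate), this yields the two bounds $U_n(K_{r,\lambda})\le c\,b_{X,2}(r/r_1)\log n$ and $U_n(K_{r,\lambda})\le c\,b_{X,2}\sqrt{r/r_2}\,\log n$.

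Next I would insert these into $\phi_n(r,\lambda)=c\big(U_n(K_{r,\lambda})\sqrt{\lambda/n}+U_n(K_{r,\lambda})\sqrt{R(A_r^*)/n}+U_n(K_{r,\lambda})^2/n\big)$, picking in each summand the cheaper of the two bounds: the $S_2$-bound $\sqrt{r/r_2}$ in the $\sqrt{\lambda/n}$ term and in the quadratic term $U_n^2/n$, and the $S_1$-bound $r/r_1$ in the middle term, where moreover $R(A_r^*)\le R(0)=\E Y^2\le b_Y^2$ by Assumption~\ref{ass:Y}. This gives $\phi_n(r,\lambda)\le c\,\phi_{n,2}(r,\lambda)$ with
\begin{equation*}
  \phi_{n,2}(r,\lambda):=b_{X,2}\sqrt{\frac r{r_2}}\,\log n\,\sqrt{\frac\lambda n}+\frac{b_{X,2}b_Y\,r\,\log n}{r_1\sqrt n}+\frac{b_{X,2}^2\,r\,(\log n)^2}{r_2\,n}.
\end{equation*}
Plugging this into Proposition~\ref{prop:penalty} and arguing as in Corollary~\ref{coro:iso-function-S1} that $\sum_{i\ge0}2^{-i}\phi_{n,2}(r,2^{i+1}\lambda)$ is comparable to $\phi_{n,2}(r,\lambda)$ — the $\sqrt\lambda$ part summing like $\sum_i 2^{-i/2}$, the two $\lambda$-free parts like $\sum_i 2^{-i}$ — I obtain $\E\norm{P-P_n}_{V_{r,\lambda}}\le c\,\phi_{n,2}(r,\lambda)$.

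It then remains to choose $\lambda$ so that $c\,\phi_{n,2}(r,\lambda)\le\lambda/8$. The two $\lambda$-free terms force $\lambda$ to dominate a constant multiple of $\frac{b_{X,2}b_Y r\log n}{r_1\sqrt n}+\frac{b_{X,2}^2 r(\log n)^2}{r_2 n}$, while the $\sqrt\lambda$ term is absorbed as soon as $\lambda$ dominates a constant multiple of $b_{X,2}^2\,\frac r{r_2}\,\frac{(\log n)^2}{n}$ (square both sides of $b_{X,2}\sqrt{r/r_2}\,\log n\,\sqrt{\lambda/n}\le\lambda/16$). Taking $\lambda=\lambda_{r_1,r_2}(r)$ as in the statement satisfies all of these, which proves the corollary.

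The step I expect to demand the most care — though it is not hard — is exactly the bookkeeping just described: one must use the $S_2$-radius $\sqrt{r/r_2}$ precisely in the terms where $U_n(K_{r,\lambda})$ enters squared, so that the eventual threshold carries the factor $1/r_2$ rather than $1/r_1^2$, and reserve the $S_1$-radius $r/r_1$ for the single term tied to the noise level $b_Y$; only then does one land exactly on $\lambda_{r_1,r_2}(r)$, with no spurious $r^2/r_1^2$ contribution surviving. Everything else — the $b_{X,2}$-rescaling, the geometric summation, and the scalar fixed-point computation — is routine and literally the same as in Corollary~\ref{coro:iso-function-S1}.
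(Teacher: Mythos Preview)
Your proposal is correct and follows essentially the same approach as the paper: the paper's proof uses the inclusion $B_r\subset\sqrt{r/r_2}\,B(S_2)\cap(r/r_1)\,B(S_1)$, applies Proposition~\ref{prop:complexity} to bound $U_n(K_{r,\lambda})$ by the cheaper of the two resulting estimates in each summand of $\phi_n(r,\lambda)$ (yielding exactly your $\phi_{n,2}$), and then refers back to Corollary~\ref{coro:iso-function-S1} for the remaining peeling and fixed-point steps. Your write-up simply makes explicit the bookkeeping (the $b_{X,2}$-rescaling, the term-by-term choice of radius, the geometric sum) that the paper leaves to the reader.
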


\begin{proof}
  Use the inclusion
  \begin{equation*}
    B_r \subset \sqrt{\frac{r}{r_2}} B(S_2) \cap \frac{r}{r_1}
    B(S_1)
  \end{equation*}
  to obtain using Proposition~\ref{prop:complexity} that
  \begin{align*}
    \phi_n&(r, \lambda) \leq c\Big( b_{X, 2} \sqrt{\frac{r}{r_2}} \log
    n \sqrt\frac{\lambda}{n} + b_{X, 2} \frac{r}{r_1} \log n \sqrt
    \frac{R(A_r^*)}{n} + \frac{b_{X, 2}^2 r (\log n)^2}{r_2 n} \Big).
  \end{align*}  
  The remaining of the proof is the same as the one of
  Corollary~\ref{coro:iso-function-S1} so it is omitted.
\end{proof}

\begin{corollary}[$\norm{\cdot}_{S_1} + \norm{\cdot}_{S_2}^2 + \norm{\cdot}_1$
  penalization]
  \label{coro:iso-function-S1S2L1}
  Let Assumptions~\ref{ass:X} and~\ref{ass:Y} hold and assume that
  $B_r = B_{r, r_1, r_2, r_3}$ for $r, r_1, r_2, r_3 > 0$,
  see~\eqref{eq:Br}. Then, we have
  \begin{equation*}
    \E\norm{P - P_n}_{V_{r, \lambda_{r_1, r_2, r_3}(r)}} \leq \frac{\lambda_{r_1,
        r_2, r_3}(r)}{8}
  \end{equation*}
  for any $r > 0$, where
  \begin{equation*}
    \lambda_{r_1, r_2, r_3}(r)  = c \Big[ \frac{b_{X, 2}^2 r (\log
      n)^2}{r_2 n}  + \Big( \frac{1}{r_1} \wedge
    \frac{\sqrt{\log(mT)}}{r_3} \Big) \frac{b_{X, 2} b_Y r
      (\log n)^{3/2}}{\sqrt n} ) \Big]. 
  \end{equation*}
\end{corollary}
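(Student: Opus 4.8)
The plan is to follow verbatim the scheme of the proofs of Corollaries~\ref{coro:iso-function-S1}--\ref{coro:iso-function-S1S2}: bound the functional $U_n(K_{r,\lambda})$ appearing in Proposition~\ref{prop:penalty} by exploiting the convexity of the penalty ball together with Proposition~\ref{prop:complexity}, insert this bound into the expression for $\phi_n(r,\lambda)$, collapse the peeling series, and then read off $\lambda_{r_1,r_2,r_3}(r)$ from the fixed-point inequality $\lambda\geq c\,\phi_n(r,\lambda)$.

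First I would record the inclusion that follows immediately from the definition~\eqref{eq:Br} of $B_r=B_{r,r_1,r_2,r_3}$: the constraint $r_1\norm{A}_{S_1}+r_2\norm{A}_{S_2}^2+r_3\norm{A}_1\leq r$ forces each summand to be at most $r$, hence
\begin{equation*}
  B_r \subset \frac{r}{r_1}B(S_1)\ \cap\ \sqrt{\tfrac{r}{r_2}}\,B(S_2)\ \cap\ \frac{r}{r_3}B_1 ,
\end{equation*}
so that $K_{r,\lambda}=2B_r\cap\sqrt\lambda D\subset 2B_r$ is contained in each of these three scaled balls. Since Proposition~\ref{prop:complexity} is stated under $\norm{X_i}_{S_2},\norm{X_i}_\infty\leq 1$, I would rescale $X_i\mapsto X_i/b_{X,2}$ (legitimate because $\norm{X_i}_\infty\leq\norm{X_i}_{S_2}\leq b_{X,2}$ under Assumption~\ref{ass:X}), which turns $\norm{\cdot}_{\infty,n}$ into $b_{X,2}$ times the rescaled empirical metric, so each $\gamma_2$ bound of Proposition~\ref{prop:complexity} picks up a factor $b_{X,2}$. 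Using that $\gamma_2$ of an intersection is at most the minimum of the $\gamma_2$'s of the pieces, this gives
\begin{equation*}
  U_n(K_{r,\lambda}) \leq c\,b_{X,2}\,\min\Big(\frac{r}{r_1}\log n,\ \sqrt{\tfrac{r}{r_2}}\,\log n,\ \frac{r}{r_3}(\log n)^{3/2}\sqrt{\log(mT)}\Big),
\end{equation*}
the $\sqrt{\log(mT)}$ being removable in the matrix completion model by the last assertion of Proposition~\ref{prop:complexity}.

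Next I would substitute this into the formula for $\phi_n(r,\lambda)$ supplied by Proposition~\ref{prop:penalty}, making the term-by-term choice that yields the cleanest residue. For the two ``fast-rate'' pieces $U_n(K_{r,\lambda})\sqrt{\lambda/n}$ and $U_n(K_{r,\lambda})^2/n$ I would use the $S_2$-branch $U_n\leq c\,b_{X,2}\sqrt{r/r_2}\,\log n$ (the only one involving neither $m$ nor $T$), and for the ``slow-rate'' piece $U_n(K_{r,\lambda})\sqrt{R(A_r^*)/n}$ I would use the minimum of the $S_1$- and $\ell_1$-branches together with $R(A_r^*)\leq\E Y^2\leq b_Y^2$ from Assumption~\ref{ass:Y}, bounding the $\log n$ of the $S_1$-branch by $(\log n)^{3/2}$ so as to keep one common power. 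This produces
\begin{equation*}
  \phi_n(r,\lambda)\leq c\Big( b_{X,2}\sqrt{\tfrac{r}{r_2}}\,(\log n)\sqrt{\tfrac{\lambda}{n}} + b_{X,2}b_Y\Big(\tfrac{1}{r_1}\wedge\tfrac{\sqrt{\log(mT)}}{r_3}\Big)\tfrac{r\,(\log n)^{3/2}}{\sqrt n} + \frac{b_{X,2}^2\,r\,(\log n)^2}{r_2\,n}\Big).
\end{equation*}
Exactly as in Corollary~\ref{coro:iso-function-S1}, the peeling sum $\sum_{i\geq 0}2^{-i}\phi_n(r,2^{i+1}\lambda)$ is comparable to its first term because $\phi_n(r,\cdot)$ grows no faster than $\sqrt{\lambda}$, whence $\E\norm{P-P_n}_{V_{r,\lambda}}\leq c\,\phi_n(r,\lambda)$; one then checks that the announced $\lambda_{r_1,r_2,r_3}(r)$ satisfies $\E\norm{P-P_n}_{V_{r,\lambda_{r_1,r_2,r_3}(r)}}\leq\lambda_{r_1,r_2,r_3}(r)/8$, the $\sqrt{\lambda/n}$ piece forcing only a constraint of the same order $b_{X,2}^2 r(\log n)^2/(r_2 n)$ as the $\lambda$-free piece.

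The sole genuinely delicate point --- everything else being a line-by-line replay of the earlier corollaries --- is the bookkeeping of which of the three complexity bounds to feed into which of the three pieces of $\phi_n$: one must notice that the $1/n$ contributions are cheapest from the $S_2$-ball (and then independent of $m$ and $T$), while the $S_1$- and $\ell_1$-balls are needed only for the $1/\sqrt n$ term carrying $\sqrt{R(A_r^*)}$. Any other allocation remains valid but produces a weaker and less readable residue.
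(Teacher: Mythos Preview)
Your proposal is correct and follows essentially the same approach as the paper, which simply states that the proof ``follows the same steps as the proof of Corollary~\ref{coro:iso-function-S1S2}''. You carry out exactly that replay --- the inclusion $B_r\subset (r/r_1)B(S_1)\cap\sqrt{r/r_2}\,B(S_2)\cap(r/r_3)B_1$, the term-by-term allocation of the $S_2$-branch to the fast-rate pieces and of the $S_1\wedge\ell_1$-branch to the slow-rate piece, and the fixed-point reading of $\lambda_{r_1,r_2,r_3}(r)$ --- with more explicit bookkeeping than the paper provides.
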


\begin{proof}
  The proof follows the same steps as the proof of
  Corollary~\ref{coro:iso-function-S1S2}.
\end{proof}

The main difference between $\lambda_1(r), \lambda_{r, r_1, 0,
  r_3}(r)$ and $\lambda_{r_1, r_2}(r), \lambda_{r_1, r_2, r_3}(r)$ is
that $\lambda_{r_1, r_2}(r)$ and $\lambda_{r_1, r_2, r_3}(r)$ are
linear in $r$ while $\lambda_1(r)$ and $\lambda_{r_1, 0, r_3}(r)$ are
quadratic. The analysis of the isomorphic functions with quadratic
terms will require an extra argument in the proof, in order to remove
them from the penality (see below).

\begin{remark}[Localization does not work here]
  Note that, in Corollaries~\ref{coro:iso-function-S1}
  to~\ref{coro:iso-function-S1L1}, we don't use the fact that $K_{r,
    \lambda} \subset \sqrt \lambda D$, that is, we don't use the
  localization argument which usually allows to derive fast rates in
  statistical learning theory. Indeed, for the matrix completion
  problem, one has $\E \prodsca{X}{A - A_r^*}^2 = \frac{1}{m T}
  \norm{A - A_r^*}_{S_2}^2$, so when $\E \prodsca{X}{A - A_r^*}^2 \leq
  \lambda$, we only know that $A \in A_r^* + \sqrt{m T \lambda}
  B(S_2)$, leading to a term of order $m T / n$ (up to logarithms) in
  the isomorphic function. This term is way too large, since one has
  typically in matrix completion problems that $mT \gg n$.
\end{remark}

\subsection{Isomorphic penalization method}

We introduce the \emph{isomorphic penalization method} developed by
P. Bartlett, S. Mendelson and J. Neeman in the following general
setup. Let $(\cZ,\sigma_\cZ,\nu)$ be a measurable space endowed with
the probability measure $\nu$. We consider $Z,Z_1,Z_2,\ldots,Z_n$
i.i.d. random variables having $\nu$ for common probability
distribution. We are given a class $\cF$ of functions on a measurable
space $(\cX,\sigma_\cX)$, a loss function and a risk function
\begin{equation*}
  Q : \cZ \times \cF \rightarrow \mathbb{R} ; \quad R(f) = \E Q(Z,f).
\end{equation*}
For the problem we have in mind, we will use $Q((X,Y), A) = (Y -
\inr{X,A})^2$ for every $A \in \cM_{m, T}$.

Now, we go into the core of the isomorphic penalization method. We are
given a model $F\subset\cF$ and a family $\{F_r:r\geq0\}$ of subsets
of $F$. We consider the following definition.

\begin{definition}[cf.~\cite{Mendelson08regularizationin}]
  \label{Def:Ordered_Parametrized_Hierarchy}
  Let $\rho_n$ be a non-negative function defined on
  $\R_+\times\R_+^*$ (which may depend on the sample). We say that the
  family $\{F_r:r\geq0\}$ of subsets of $F$ is an ordered,
  parameterized hierarchy of $F$ with isomorphic function $\rho_n$
  when the following conditions are satisfied:
\begin{enumerate}
\item $\{F_r:r\geq0\}$ is non-decreasing (that is $s\leq t\Rightarrow
  F_s\subseteq F_t$);
\item for any $r\geq0$, there exists a unique element $f^*_r\in F_r$
  such that $R(f^*_r)=\inf(R(f):f\in F_r)$; we consider the excess
  loss function associated with the class $F_r$
  \begin{equation}\label{eq:ExcessLossFr}
    \cL_{r,f}(\cdot)=Q(\cdot,f)-Q(\cdot,f_r^*);
  \end{equation}
\item the map $r\longmapsto R(f^*_r)$ is continuous;
\item for every $r_0\geq0$, $\cap_{r\geq r_0}F_r=F_{r_0}$;
\item $\cup_{r\geq0}F_r=F$;
\item for every $r\geq0$ and $u>0$, with probability at least $1-\exp(-u)$
  \begin{equation}\label{eq:IsomorphicProperty}
     (1/2)P_n\cL_{r,f}-\rho_n(r,u)\leq P\cL_{r,f}\leq 2P_n\cL_{r,f}+\rho_n(r,u),
  \end{equation}for any $f\in F_r$ and  $P_n\cL_{r,f}=(1/n)\sum_{i=1}^n\cL_{r,f}(Z_i)$.
\end{enumerate}
\end{definition}

In the context of learning theory, ordered, parametrized hierarchy of
a set $F$ with isomorphic function $\rho_n$ provides a very general
framework for the construction of penalized empirical risk
minimization procedure. The following result from
\cite{Mendelson08regularizationin} proves that the isomorphic function
is a ``correct penalty function''.

\begin{theorem}[\cite{Mendelson08regularizationin}]
  \label{Theo:MN:08}
  There exists absolute positive constants $c_1$ and $c_2$ such that
  the following holds.  Let $\{F_r:r\geq0\}$ be an ordered,
  parameterized hierarchy of $F$ with isomorphic function
  $\rho_n$. Let $u>0$. With probability at least $1-\exp(-u)$ any
  penalized empirical risk minimization procedure
  \begin{equation}\label{eq:PERM1}
    \hat{f} \in \argmin_{f\in F} \Big(R_n(f) + c_1 \rho_n(2(r(f)
    + 1), \theta(r(f)+1,u)) \Big),
  \end{equation}
  where $r(f)=\inf(r\geq0: f\in F_r)$ and
  $R_n(f)=(1/n)\sum_{i=1}^nQ(Z_i,f)$ is the empirical risk of $f$,
  satisfies
  \begin{equation*}
    R(\hat{f})\leq \inf_{f\in F}\Big( R(f) + c_2
    \rho_n(2(r(f)+1),\theta(r(f)+1,u)) \Big)
  \end{equation*}
  where for all $r\geq1$ and $x>0$,
  \begin{equation*}
    \theta(r, x) = x + \ln(\pi^2/6) + 2 \ln\Big(1 +
    \frac{R(f_0^*)}{\rho_n(0,x+\log(\pi^2/6))}+\log r\Big).
  \end{equation*}
\end{theorem}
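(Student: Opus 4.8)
The plan is to combine a union bound over a discretization of the scale parameter with the defining optimality of the penalized empirical risk minimizer, in the spirit of the isomorphic method of \cite{Mendelson08regularizationin}. First I would fix $u > 0$ and apply the isomorphic property~\eqref{eq:IsomorphicProperty} at each scale $r$ ranging over a countable net $\cN$ of scales (one takes the dyadic scales together with $r = 0$), each time with confidence level $\theta(r', u)$ for the index $r'$ attached to $r$. The function $\theta(r, x) = x + \ln(\pi^2/6) + 2\ln\big(1 + R(f_0^*)/\rho_n(0, x + \log(\pi^2/6)) + \log r\big)$ is calibrated precisely so that $\sum_{r \in \cN} e^{-\theta(r', u)} \le e^{-u}$: along dyadic scales $r = 2^k$ the summand decays like $(1 + \log r)^{-2} \asymp k^{-2}$, and $\sum_k k^{-2} = \pi^2/6$. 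A union bound then gives an event $\Omega_u$ with $\P(\Omega_u) \ge 1 - e^{-u}$ on which~\eqref{eq:IsomorphicProperty} holds at every scale of $\cN$ simultaneously. Using that the hierarchy is non-decreasing (conditions 1, 4 and 5 of Definition~\ref{Def:Ordered_Parametrized_Hierarchy}) and that $\rho_n$ and $\theta$ are non-decreasing in the scale, I would upgrade this to: on $\Omega_u$, for every $r \ge 0$ and every $f \in F_r$, the two-sided inequality~\eqref{eq:IsomorphicProperty} holds with $\rho_n(2r, \theta(r, u))$ on the right-hand side --- that is, moving from an arbitrary $r$ to the next net point costs only a bounded multiplicative factor.

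Next I would exploit the optimality~\eqref{eq:PERM1}. Fix an arbitrary $g \in F$ and write $r_g = r(g)$, $\hat r = r(\hat f)$, $\rho(g) = \rho_n(2(r_g + 1), \theta(r_g + 1, u))$. For any scale $\tilde r$ with $\hat f \in F_{\tilde r}$, comparing $\hat f$ against the \emph{in-model oracle} $f_{\tilde r}^*$ in~\eqref{eq:PERM1} and using $\cL_{\tilde r, f} = Q(\cdot, f) - Q(\cdot, f_{\tilde r}^*)$ gives $P_n\cL_{\tilde r, \hat f} \le c_1\big[\rho_n(2(r(f_{\tilde r}^*) + 1), \theta(r(f_{\tilde r}^*) + 1, u)) - \rho_n(2(\hat r + 1), \theta(\hat r + 1, u))\big]$, and since $r(f_{\tilde r}^*) \le \tilde r$ the first term is controlled by $\rho_n$ evaluated at a scale comparable to $\tilde r$. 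Feeding this into the right-hand inequality of~\eqref{eq:IsomorphicProperty} at scale $\tilde r$ (upgraded as above) yields $R(\hat f) - R(f_{\tilde r}^*) = P\cL_{\tilde r, \hat f} \le 2 P_n\cL_{\tilde r, \hat f} + \rho_n(2\tilde r, \theta(\tilde r, u))$, hence $R(\hat f) \le R(f_{\tilde r}^*) + (\text{absolute multiple of } \rho_n \text{ at scale} \asymp \tilde r)$. The \emph{sharp} constant $1$ in front of $R(g)$ appears at this last step: since $g \in F_{r_g} \subseteq F_{\tilde r}$ and $f_{\tilde r}^*$ minimizes $R$ over $F_{\tilde r}$, one has $R(f_{\tilde r}^*) \le R(g)$, so no factor is lost on $R(g)$.

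It remains to choose $\tilde r$, which must satisfy $\tilde r \ge \hat r$ (so that $\hat f \in F_{\tilde r}$) while keeping $\rho_n$ at scale $\asymp \tilde r$ comparable to $\rho(g)$ rather than to some quantity depending on the random radius $\hat r$; this forces a dichotomy. If $\hat r \le 2(r_g + 1)$, take $\tilde r = 2(r_g + 1)$, which is deterministic, and the previous paragraph directly gives $R(\hat f) \le R(g) + c_2 \rho(g)$ with $c_2 = c_2(c_1)$ absolute. The delicate case is $\hat r > 2(r_g + 1)$, where the estimator escapes to a much larger model: here $\tilde r$ is taken at the net point just above $\hat r$, and one uses in addition that the penalty itself pins down $\hat r$ --- from~\eqref{eq:PERM1} and $R_n(\hat f) \ge 0$ (square loss) one gets $c_1 \rho_n(2(\hat r + 1), \theta(\hat r + 1, u)) \le R_n(g) + c_1 \rho(g)$, and combining this with the concentration of $R_n(g)$ around $R(g)$ (itself a consequence of~\eqref{eq:IsomorphicProperty} at a small scale) together with $R(f_{\tilde r}^*) \le R(g)$ shows the extra contribution is again absorbed into $c_2 \rho(g)$ once $c_1$ is chosen large enough.

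The two places that require genuine care are the probability bookkeeping of the first paragraph --- producing a single event on which the isomorphic property is uniform over all scales while retaining probability $\ge 1 - e^{-u}$, which is exactly the purpose of the intricate definition of $\theta$ --- and the escape case $\hat r > 2(r_g + 1)$ of the dichotomy; everything else is a routine chaining of the inequalities already assembled, together with the adjustment of the absolute constants $c_1, c_2$. Since the statement is quoted verbatim from \cite{Mendelson08regularizationin}, one may alternatively just invoke it, but the argument above is the structure of its proof.
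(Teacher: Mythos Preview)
The paper does not prove this theorem at all: it is stated with the citation \cite{Mendelson08regularizationin} and used as a black box in Section~\ref{sec:Proofs-of-the-two-main-results}. So there is nothing to compare against --- the ``paper's own proof'' is simply an invocation of the reference, which you yourself note is a legitimate option in your last sentence.

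That said, your sketch is a faithful outline of the argument in \cite{Mendelson08regularizationin}: the union bound over a net of scales with summable confidence levels (which is exactly what the form of $\theta$ encodes), the use of the two-sided isomorphic inequality at the appropriate scale, and the dichotomy between $\hat r$ small versus $\hat r$ large relative to $r_g$ are the right structural ingredients. One small point of care in the ``escape'' case: the inequality $R_n(\hat f) \ge 0$ you invoke is specific to nonnegative losses, whereas the theorem is stated abstractly for any loss $Q$; in the original argument the control comes instead from comparing $\hat f$ to $f_0^*$ (or to $g$) in~\eqref{eq:PERM1} and using the left-hand isomorphic inequality to pass from $R_n$ back to $R$ --- this is also where the term $R(f_0^*)/\rho_n(0,\cdot)$ in $\theta$ is actually used. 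With that correction your sketch would stand on its own, but for the purposes of this paper a one-line citation suffices.
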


\subsection{End of the proof of Theorems~\ref{thm:oracle-S1}
  and~\ref{thm:oracle-S1-S2}}
\label{sec:Proofs-of-the-two-main-results}

First, we need to prove that the family of models $\{B_r : r \geq 0
\}$ is an ordered, parametrized hierarchy of $\cM_{m,T}$. First,
fourth and fifth points of
Definition~\ref{Def:Ordered_Parametrized_Hierarchy} are easy to
check. Second point follows from Lemma~\ref{lem:convexity}. For the
third point, we consider $0\leq q<r<s$, $\beta:=q/r$ and
$\alpha:=r/s$. Since $\alpha A_s^*\in B_r$, we have
\begin{equation*}
  0 \leq R(A_r^*) - R(A_s^*) \leq R(\alpha
  A_s^*) - R(A_s^*) \leq (\alpha^2 - 1) \norm{\inr{X, A_s^*}}_{L^2}^2
  + 2(1-\alpha) \norm{Y}_2 \norm{\inr{X,A_s^*}}_{L^2}.
\end{equation*}
As $s\rightarrow r$, the rights hand side tends to zero (because
$\inr{X,A_s^*}$ are uniformly bounded in $L_2$ for $s\in[r,r+1]$). So
$r \mapsto R(A_r^*)$ is upper semi-continuous on $(0,\infty)$. The
continuity in $r=0$ follows the same line. In the other direction,
\begin{equation*}
  0 \leq R(A_q^*) - R(A_r^*) \leq R(\beta
  A_r^*) - R(A_r^*) \leq (\alpha^2 - 1) \norm{\inr{X,A_r^*}}_{L^2}^2 +
  2(1-\alpha) \norm{Y}_2 \norm{\inr{X, A_r^*}}_{L^2}
\end{equation*}
and the right hand side tends to zero for the same reason as before.

Now, we turn to the sixth point of
Definition~\ref{Def:Ordered_Parametrized_Hierarchy}. That is the
computation of the isomorphic function $\rho_n$ associated with the
family $\{B_r:r\geq0\}$. Using Theorem~\ref{cor:isomorphy} we obtain
that, with a probability larger than $1 - 4 e^{-x}$:
\begin{equation*}
  \frac 12 P_n \cL_{r, A} - \rho_n(r, x) \leq P \cL_{r, A} \leq 2
  P_n \cL_{r, A} + \rho_n(r, x) \quad \forall A \in B_r,
\end{equation*}
where
\begin{equation*}
  \rho_n(r, x) := c \Big[ \lambda(r) + \big( b_Y' + C_r\big)^2
  \Big(\frac{x\log n}{n}\Big)\Big],
\end{equation*}
where $b_Y' := b_{Y,\psi_1} + b_{Y,\infty} + b_{Y,2}$, where $C_r$ and
$\lambda(r)$ are defined depending on the considered penalization
(see~\eqref{eq:parameter-Cr} and
Corollaries~\ref{coro:iso-function-S1}
to~\ref{coro:iso-function-S1S2L1}). Now, we apply
Theorem~\ref{Theo:MN:08} to the hierarchy $F_r = B_r$ for $r \geq 0$.
First of all, note that, for every $x > 0$ and $r \geq 1$
\begin{align*}
  \theta(r, x) &= x + \ln(\pi^2/6) + 2 \ln\Big(1 + \frac{\E
    Y^2}{\rho_n(0, x + \log(\pi^2/6))} + \log r\Big) \\
  &\leq x + c( \log n + \log \log r),
\end{align*}
so $\rho_n(2(r+1), \theta(r+1, x)) \leq \rho_n'(r, x)$, with:
\begin{equation*}
  \rho_n'(r, x) := c \Big[ \lambda (2(r + 1)) +
  (b_Y' + C_r )^2 \frac{(x + \log n + \log\log r)\log n}{n}\Big].
\end{equation*}
From now on, the analysis depends on the penalization, so we consider
them separately.

\subsubsection{The $\norm{\cdot}_{S_1}$ case}

Recall that in this case
\begin{equation*}
  \lambda(r) = c \Big( \frac{b_{X, 2}^2 r^2 (\log
    n)^2}{n} + \frac{b_{X, 2} b_Y r \log n }{\sqrt n} \Big) 
\end{equation*}
and $C_r = b_{X, \infty} r$, see~\eqref{eq:parameter-Cr}. An easy
computation gives $\rho_n'(r, x) \leq \tilde \rho_{n, 1}(r, x)$ where
\begin{align*}
  \tilde \rho_{n, 1}(r, x) := c_{X, Y} \frac{ (r+1)^2 (x + \log n \vee
    \log \log r) \log n}{n} \vee p_{n, 1}(r, x),
\end{align*}
where $c_{X, Y} := c (1 + b_{X, 2}^2 + b_Y b_X + b_{Y, \psi_1}^2 +
b_{Y, \infty}^2 + b_{Y, 2}^2 + b_{X, \infty}^2)$ and where
\begin{equation*}
  p_{n, 1}(r, x) :=  c_{X, Y} \frac{ (r + 1) (x + \log n)  \log n}{\sqrt n}.
\end{equation*}
Note that $p_{n, 1}(r, x)$ is the penalty we want (the one considered
in Theorem~\ref{thm:oracle-S1}). Let us introduce for short $r(A) =
\norm{A}_{S_1}$ and the following functionals:
\begin{align*}
  \Lambda_1(A) &= R(A) + \pen_1(A), \quad \Lambda_{n, 1}(A) = R_n(A) +
  \pen_1(A), \\
  \tilde \Lambda_1(A) &= R(A) + \tilde \pen_1(A), \quad \tilde
  \Lambda_{n, 1}(A) = R_n(A) + \tilde \pen_1(A),
\end{align*}
where $\pen_1(A) := p_{n, 1}(r(A), x)$ and where $\tilde \pen_1(A) :=
\tilde \rho_{n, 1}(r(A), x)$ is a penalization that satisfies that, if
$\tilde A \in \argmin_{A} \tilde \Lambda_{n, 1}(A)$, then we have
$R(\tilde A) \leq \inf_{A} \tilde \Lambda_1(A)$ with a probability
larger than $1 - 4e^{-x}$. Recall that we want to prove that if $\hat
A \in \argmin_{A} \Lambda_{n, 1}(A)$, then we have $R(\hat A) \leq
\inf_{A} \Lambda_1(A)$ with a probability larger than $1 - 5
e^{-x}$. This will follow if we prove
\begin{align}
  \label{eq:end-argument1}
  &\inf_A \tilde \Lambda_1(A) \leq \inf_A \Lambda_1(A) \quad \text{ and }
  \\
  \label{eq:end-argument2}
  &\argmin_{A} \Lambda_{n, 1}(A) \subset \argmin_{A} \tilde
  \Lambda_{n, 1}(A),
\end{align}
so we focus on the proof of these two facts. First of all, let us
prove that if $\tilde \rho_{n, 1}(r, x) > p_{n, 1}(r, x)$ then both
$r$ and $p_{n, 1}(r, x)$ cannot be small.

If $\log n < \log \log r$ we have $r > e^n$ and $p_{n, 1}(x, r) >
c_{X, Y} e^n (\log n)^2 / \sqrt n$. If $\log n \geq \log \log r$ and
$\tilde \rho_{n, 1}(r, x) > p_{n, 1}(r, x)$, then
\begin{align*}
  \frac{(r+1)^2 (x + \log n) \log n }{n} > \frac{(r + 1) (x + \log n)
    \log n}{\sqrt n},
\end{align*}
so $r > \sqrt n - 1$ and $p_{n, 1}(r, x) > c_{X, Y} (\log n)^2$.
Hence, we proved that if $\tilde \rho_{n, 1}(r, x) > p_{n, 1}(r, x)$,
then $r > 1$ and $p_{n, 1}(r, x) > c_{X, Y} (\log n)^2$. Note also
that $p_{n, 1}(r, x) > 2 (x + \log n) \log n / \sqrt n$ since $r > 1$.

Let us turn to the proof of~\eqref{eq:end-argument1}. Let $A'$ be such
that $\tilde \Lambda_1(A') > \Lambda_1(A')$. Then $\tilde \pen_1(A') >
\pen_1(A')$, ie $\tilde \rho_{n, 1}(r(A'), x) > p_{n, 1}(r(A'), x)$,
so that $r(A') > 1$, $p_{n, 1}(r(A'), x) > c_{X, Y} (\log n)^2$ and
$p_{n, 1}(r(A'), x) > 2 c_{X, Y} (x + \log n) \log n / \sqrt n$. On
the other hand, we have $\inf_A \Lambda_1(A) \leq b_Y^2 + \pen_1(0) =
b_Y^2 + p_{n, 1}(0, x)$. But $p_{n, 1}(r(A'), x) > c_{X, Y} (\log n)^2
> 2 b_Y^2$ and $p_{n, 1}(r(A'), x) > 2 p_{n, 1}(0, x)$ since $r(A') >
1$, so that $b_Y^2 + p_{n, 1}(0, x) < p_{n, 1}(r(A'), x)$ and then
\begin{equation*}
  \inf_A \Lambda_1(A) < p_n(r(A'), x) \leq \Lambda_1(A').
\end{equation*}
Hence, we proved that if $A'$ is such that $\Lambda_1(A') \leq \inf_A
\Lambda_1(A)$, we have $\tilde \Lambda_1(A') \leq \Lambda_1(A')$, so
$\inf_A \tilde \Lambda_1(A) \leq \tilde \Lambda_1(A') \leq
\Lambda_1(A') \leq \inf_A \Lambda_1(A)$, which
proves~\eqref{eq:end-argument1}.

The proof of~\eqref{eq:end-argument2} is almost the same. Let $A'$ be
such that $\tilde \Lambda_{n, 1}(A') > \Lambda_{n, 1}(A')$, so as
before we have $r(A') > 1$, $p_{n, 1}(r(A'), x) > c_{X, Y} (\log n)^2$
and $p_{n, 1}(r(A'), x) > 2 c_{X, Y} (x + \log n) \log n / \sqrt
n$. This time we have $\inf_A \Lambda_{n, 1}(A) \leq n^{-1}
\sum_{i=1}^n Y_i^2 + p_{n, 1}(0, x)$, so we use some concentration for
the sum of the $Y_i^2$'s. Indeed, we have, as a consequence of
\cite{MR2424985}, that
\begin{equation}
  \label{eq:adamcazk-one}
  \frac{1}{n} \sum_{i=1}^n Y_i^2 \leq \E Y^2 + c_1 \sqrt{\E(Y^4) \frac
    xn} + c_2 \log n\frac{\norm{Y^2}_{\psi_1} x}{n}
\end{equation}
with a probability larger than $1 - e^{-x}$. But then, it is easy to
infer that for $n$ large enough, the right hand side of
(\ref{eq:adamcazk-one}) is smaller than $p_{n, 1}(r(A'), x) / 2$, so
that we have, on an event of probability larger than $1 - e^{-x}$,
that
\begin{equation*}
  \inf_A \Lambda_{n, 1}(A) \leq \frac 1n \sum_{i=1}^n Y_i^2 + p_{n,
    1}(0, x) < p_{n, 1}(r(A'), x) < \Lambda_{n, 1}(A').
\end{equation*}
So, we proved that if $\Lambda_{n, 1}(A') < \tilde \Lambda_{n,
  1}(A')$, then $A' \notin \argmin_A \Lambda_{n, 1}(A)$, or
equivalently that $\argmin_A \Lambda_{n, 1}(A) \subset \{ A : \tilde
\Lambda_{n, 1}(A) \leq \Lambda_{n, 1}(A) \}$. But $\Lambda_{n, 1}(A)
\leq \tilde \Lambda_{n, 1}(A)$ for any $A$ (since $p_{n, 1}(r, x) \leq
\tilde \rho_{n, 1}(r, x)$), so~\eqref{eq:end-argument2} follows. This
concludes the proof of Theorem~\ref{thm:oracle-S1}.

\subsubsection{The $\norm{\cdot}_{S_1} + \norm{\cdot}_1$ case}

Recall that in this case
\begin{equation*}
  \lambda(r) = c \Big[ \Big( \frac{1}{r_1}
  \wedge \frac{\sqrt{\log(mT)}}{r_3} \Big)^2 \frac{b_{X, 2}^2 r^2 (\log
    n)^2}{n} + \Big( \frac{1}{r_1} \wedge
  \frac{\sqrt{\log(mT)}}{r_3} \Big) \frac{b_{X, 2} b_Y r
    (\log n)^{3/2}}{\sqrt n} ) \Big],
\end{equation*}
and that
\begin{equation*}
  C_r = \min\Big(b_{X,\infty}\frac{r}{r_1}, b_{X, \ell_\infty}
  \frac{r}{r_3} \Big),
\end{equation*}
see~\eqref{eq:parameter-Cr}. An easy computation gives that
$\rho_n'(r, x) \leq \tilde \rho_{n, 2}(r, x)$, where
\begin{align*}
  \tilde \rho_{n, 2}(r, x) := c_{X, Y} \Big( \frac{1}{r_1} \wedge
  \frac{\sqrt{\log(m T)}}{r_3}\Big)^2 \frac{(r+1)^2(x + \log n \vee
    \log \log r) \log n}{n} \vee p_{n, 2}(r, x),
\end{align*}
where $c_{X, Y} = c (1 + b_{X, 2}^2 + b_{X, 2} b_Y + b_{Y, \psi_1}^2 +
b_{Y, \infty}^2 + b_{Y, 2}^2 + b_{X, \infty}^2 + b_{X,
  \ell_\infty}^2)$ and
\begin{equation*}
  p_{n, 2}(r, x) := c_{X, Y} \Big( \frac{1}{r_1} \wedge
  \frac{\sqrt{\log(m T)}}{r_3}\Big) \frac{(r+1)(x + \log n) (\log n)^{3/2}}{\sqrt n}. 
\end{equation*}
Note that $p_{n, 2}(r, x)$ is the penalization we want (the one
considered in Theorem~\ref{thm:oracle-S1L1}). Introducing $r(A) = r_1
\norm{A}_{S_1} + r_3 \norm{A}_1$, the remaining of the proof follows
the lines of the pure $\norm{\cdot}_{S_1}$ case, so it is omitted.

\subsubsection{The $\norm{\cdot}_{S_1} + \norm{\cdot}_{S_2}^2$ case}

This is easier than what we did for the $\norm{\cdot}_{S_1}$ case,
since we only have a $\log \log r$ term to remove from the
penalization. Recall that
\begin{equation*}
  \lambda(r) = c \Big( \frac{b_{X, 2}^2 r (\log
    n)^2}{r_2 n} + \frac{b_{X, 2} b_Y r \log n}{r_1 \sqrt n} \Big),
\end{equation*}
and
\begin{equation*}
  C_r = \min\Big( b_{X, \infty} \frac{r}{r_1}, b_{X,2}
  \sqrt{\frac{r}{r_2}} \Big) \leq b_{X,2}\sqrt{\frac{r}{r_2}},
\end{equation*}
so that $\rho_n'(r, x) \leq \tilde \rho_{n, 3}(r, x)$ where
\begin{align*}
  \tilde \rho_{n, 3}(r, x) = c_{X, Y} \frac{(r+1) \log n}{\sqrt n}
  \Big( \frac{1}{r_1} + \frac{(x + \log n \vee \log\log r) \log n}{r_2
    \sqrt n} \Big),
\end{align*}
where $c_{X, Y} = c (1 + b_{X, 2}^2 + b_{X, 2} b_Y + b_{Y, \psi_1}^2 +
b_{Y, \infty}^2 + b_{Y, 2}^2)$. This is almost the penalty we want, up
to the $\log \log r$ term, so we consider.
\begin{equation*}
  p_{n, 3}(r, x) = c_{X, Y} \frac{(r+1) \log n}{\sqrt
    n} \Big( \frac{1}{r_1} + \frac{(x + \log n) \log
    n}{r_2 \sqrt n} \Big),
\end{equation*}
Let us introduce for short
\begin{equation*}
  r(A) := r_1 \norm{A}_{S_1} + r_2 \norm{A}_{S_2}^2 = \inf \big(r \geq
  0 : A \in B_r \big)
\end{equation*}
and the following functionals:
\begin{align*}
  \Lambda_3(A) &= R(A) + \pen_3(A), \quad \Lambda_{n, 3}(A) = R_n(A) +
  \pen_3(A), \\
  \tilde \Lambda_3(A) &= R(A) + \tilde \pen_3(A), \quad \tilde
  \Lambda_{n, 3}(A) = R_n(A) + \tilde \pen_3(A),
\end{align*}
where $\pen_3(A) := p_{n, 3}(r(A), x)$ and where $\tilde \pen_3(A) :=
\tilde \rho_{n, 3}(r(A), x)$. We only need to prove that
\begin{align}
  \label{eq:end-argument1bis}
  &\inf_A \tilde \Lambda_3(A) \leq \inf_A \Lambda_3(A) \quad \text{ and }
  \\
  \label{eq:end-argument2bis}
  &\argmin_{A} \Lambda_{n, 3}(A) \subset \argmin_{A} \tilde
  \Lambda_{n, 3}(A).
\end{align}
Obviously, if $\tilde \rho_{n, 3}(r, x) > p_{n, 3}(r, x)$, then $r >
e^n$, so following the arguments we used for the $S_1$ penalty, it is
easy to prove both~\eqref{eq:end-argument1bis}
and~\eqref{eq:end-argument2bis}. This concludes the proof of
Theorem~\ref{thm:oracle-S1-S2}.

\subsubsection{The $\norm{\cdot}_{S_1} + \norm{\cdot}_{S_2}^2 +
  \norm{\cdot}_1$ case}

Recall that in this case
  \begin{equation*}
    \lambda(r)  = c \Big[ \frac{b_{X, 2}^2 r (\log
      n)^2}{r_2 n}  + \Big( \frac{1}{r_1} \wedge
    \frac{\sqrt{\log(mT)}}{r_3} \Big) \frac{b_{X, 2} b_Y r
      (\log n)^{3/2}}{\sqrt n} \Big].
  \end{equation*}
  and that
\begin{equation}
  C_r = \min\Big(b_{X,\infty}\frac{r}{r_1},
  b_{X,2}\sqrt{\frac{r}{r_2}}, b_{X, \ell_\infty} \frac{r}{r_3} \Big)
  \leq b_{X,2}\sqrt{\frac{r}{r_2}},
\end{equation}
see~\eqref{eq:parameter-Cr}. An easy computation gives that
$\rho_n'(r, x) \leq \tilde \rho_{n, 4}(r, x)$, where
\begin{align*}
  \tilde \rho_{n, 4}(r, x) := c_{X, Y} \frac{(r+1) (\log n)^{3/2}}{\sqrt n}
  \Big( \frac{1}{r_1} \wedge \frac{\sqrt{\log(m T)}}{r_3} + \frac{x +
    \log n \vee \log \log r}{r_2 \sqrt n} \Big)
\end{align*}
where $c_{X, Y} = c (1 + b_{X, 2}^2 + b_{X, 2} b_Y + b_{Y, \psi_1}^2 +
b_{Y, \infty}^2 + b_{Y, 2}^2)$. The penalization we want is
\begin{align*}
  p_{n, 4}(r, x) := c_{X, Y} \frac{(r+1) (\log n)^{3/2}}{\sqrt n} \Big(
  \frac{1}{r_1} \wedge \frac{\sqrt{\log(m T)}}{r_3} + \frac{x + \log
    n}{r_2 \sqrt n} \Big),
\end{align*}
so introducing $r(A) = r_1 \norm{A}_{S_1} + r_2 \norm{A}_{S_2}^2 + r_3
\norm{A}_1$ and following the lines of the proof of the $S_1 + S_2$
case to remove the $\log \log r$ term, it is easy to conclude the
proof of Theorem~\ref{thm:oracle-S1S2L1}.

\bibliographystyle{plain}

\bibliography{biblio}

\begin{thebibliography}{10}

\bibitem{abernethy2009new}
J.~Abernethy, F.~Bach, T.~Evgeniou, and J.P. Vert.
\newblock {A new approach to collaborative filtering: Operator estimation with
  spectral regularization}.
\newblock {\em The Journal of Machine Learning Research}, 10:803--826, 2009.

\bibitem{MR2424985}
Rados{\l}aw Adamczak.
\newblock A tail inequality for suprema of unbounded empirical processes with
  applications to {M}arkov chains.
\newblock {\em Electron. J. Probab.}, 13:no. 34, 1000--1034, 2008.

\bibitem{argyriou2008convex}
A.~Argyriou, T.~Evgeniou, and M.~Pontil.
\newblock {Convex multi-task feature learning}.
\newblock {\em Machine Learning}, 73(3):243--272, 2008.

\bibitem{argyriou2010spectral}
A.~Argyriou, C.A. Micchelli, and M.~Pontil.
\newblock {On spectral learning}.
\newblock {\em The Journal of Machine Learning Research}, 11:935--953, 2010.

\bibitem{argyriou2008spectral}
A.~Argyriou, C.A. Micchelli, M.~Pontil, and Y.~Ying.
\newblock {A spectral regularization framework for multi-task structure
  learning}.
\newblock {\em Advances in Neural Information Processing Systems}, 20:25--32,
  2008.

\bibitem{MR2417263}
Francis~R. Bach.
\newblock Consistency of trace norm minimization.
\newblock {\em J. Mach. Learn. Res.}, 9:1019--1048, 2008.

\bibitem{BM:08}
Peter Bartlett, Shahar Mendelson, and Neeman Joseph.
\newblock $\ell_1$-regularized linear regression: Persistence and oracle
  inequalitie.
\newblock {\em To appear in Bernoulli}.

\bibitem{B:08}
Peter~L. Bartlett.
\newblock Fast rates for estimation error and oracle inequalities for model
  selection.
\newblock {\em Econometric Theory}, 24(2), 2008.
\newblock (To appear. Was Department of Statistics, U.C.\ Berkeley Technical
  Report number 729, 2007).

\bibitem{BM:06}
Peter~L. Bartlett and Shahar Mendelson.
\newblock Empirical minimization.
\newblock {\em Probab. Theory Related Fields}, 135(3):311--334, 2006.

\bibitem{MR2240689}
Peter~L. Bartlett and Shahar Mendelson.
\newblock Empirical minimization.
\newblock {\em Probab. Theory Related Fields}, 135(3):311--334, 2006.

\bibitem{cai_candes_shen08}
J-F Cai, Candès~E. J., and Z.~Shen.
\newblock A singular value thresholding algorithm for matrix completion.
\newblock {\em SIAM J. on Optimization}, 20(4):1956--1982, 2008.

\bibitem{candes2005decoding}
E.J. Candes and T.~Tao.
\newblock {Decoding by linear programming}.
\newblock {\em IEEE Transactions on Information Theory}, 51(12):4203--4215,
  2005.

\bibitem{candes-plan2}
E.~J. Candès and Y.~Plan.
\newblock Tight oracle bounds for low-rank matrix recovery from a minimal
  number of random measurements.
\newblock Technical report, Department of Statistics, Stanford Univeristy,
  2009.

\bibitem{candes-plan1}
E.~J. Candès and Y.~Plan.
\newblock Matrix completion with noise.
\newblock {\em Proceedings of the IEEE}, to appear.

\bibitem{candes-recht08}
E.~J. Candès and B.~Recht.
\newblock Exact matrix completion via convex optimization.
\newblock {\em Found. of Comput. Math.}, 9:717--772, 2008.

\bibitem{candes-tao1}
E.~J. Candès and T.~Tao.
\newblock The power of convex relaxation: Near-optimal matrix completion.
\newblock {\em IEEE Trans. Inform. Theory}, to appear.

\bibitem{MR810669}
Bernd Carl.
\newblock Inequalities of {B}ernstein-{J}ackson-type and the degree of
  compactness of operators in {B}anach spaces.
\newblock {\em Ann. Inst. Fourier (Grenoble)}, 35(3):79--118, 1985.

\bibitem{Fazel04rankminimization}
M.~Fazel, H.~Hindi, and S.~Boyd.
\newblock Rank minimization and applications in system theory.
\newblock In {\em In American Control Conference}, pages 3273--3278. AACC,
  2004.

\bibitem{fazel2003log}
M.~Fazel, H.~Hindi, and S.P. Boyd.
\newblock {Log-det heuristic for matrix rank minimization with applications to
  Hankel and Euclidean distance matrices}.
\newblock In {\em Proceedings of the American Control Conference}, volume~3,
  pages 2156--2162, 2003.

\bibitem{MR757767}
Evarist Gin{\'e} and Joel Zinn.
\newblock Some limit theorems for empirical processes.
\newblock {\em Ann. Probab.}, 12(4):929--998, 1984.
\newblock With discussion.

\bibitem{gross2009recovering}
D.~Gross.
\newblock {Recovering low-rank matrices from few coefficients in any basis}.
\newblock {\em CoRR, abs/0910.1879}, 2009.

\bibitem{MR2321621}
Olivier Gu{\'e}don, Shahar Mendelson, Alain Pajor, and Nicole
  Tomczak-Jaegermann.
\newblock Subspaces and orthogonal decompositions generated by bounded
  orthogonal systems.
\newblock {\em Positivity}, 11(2):269--283, 2007.

\bibitem{DBLP:journals/corr/abs-0906-2027}
Raghunandan~H. Keshavan, Andrea Montanari, and Sewoong Oh.
\newblock Matrix completion from noisy entries.
\newblock {\em CoRR}, abs/0906.2027, 2009.

\bibitem{keshavan2009matrix}
R.H. Keshavan, S.~Oh, and A.~Montanari.
\newblock {Matrix completion from a few entries}.
\newblock {\em arxiv}, 901, 2009.

\bibitem{LT:91}
Michel Ledoux and Michel Talagrand.
\newblock {\em Probability in {B}anach spaces}, volume~23 of {\em Ergebnisse
  der Mathematik und ihrer Grenzgebiete (3) [Results in Mathematics and Related
  Areas (3)]}.
\newblock Springer-Verlag, Berlin, 1991.
\newblock Isoperimetry and processes.

\bibitem{Mendelson08regularizationin}
Shahar Mendelson and Joseph Neeman.
\newblock Regularization in kernel learning.
\newblock Technical report, 2009.
\newblock To appear in Annals of Statistics, availble at
  http://www.imstat.org/aos/.

\bibitem{MR2373017}
Shahar Mendelson, Alain Pajor, and Nicole Tomczak-Jaegermann.
\newblock Reconstruction and subgaussian operators in asymptotic geometric
  analysis.
\newblock {\em Geom. Funct. Anal.}, 17(4):1248--1282, 2007.

\bibitem{MR2453368}
Shahar Mendelson, Alain Pajor, and Nicole Tomczak-Jaegermann.
\newblock Uniform uncertainty principle for {B}ernoulli and subgaussian
  ensembles.
\newblock {\em Constr. Approx.}, 28(3):277--289, 2008.

\bibitem{negahban2009unified}
S.~Negahban, P.~Ravikumar, M.J. Wainwright, and B.~Yu.
\newblock {A unified framework for high-dimensional analysis of M-estimators
  with decomposable regularizers}.
\newblock {\em Advances in Neural Information Processing Systems}, 2009.

\bibitem{negahban2009estimation}
S.~Negahban and M.J. Wainwright.
\newblock {Estimation of (near) low-rank matrices with noise and
  high-dimensional scaling}.
\newblock {\em Arxiv preprint arXiv:0912.5100}, 2009.

\bibitem{MR817602}
Alain Pajor and Nicole Tomczak-Jaegermann.
\newblock Remarques sur les nombres d'entropie d'un op\'erateur et de son
  transpos\'e.
\newblock {\em C. R. Acad. Sci. Paris S\'er. I Math.}, 301(15):743--746, 1985.

\bibitem{recht2009simpler}
B.~Recht.
\newblock {A simpler approach to matrix completion}.
\newblock {\em CoRR, abs/0910.0651}, 2009.

\bibitem{recht2007guaranteed}
B.~Recht, M.~Fazel, and P.A. Parrilo.
\newblock {Guaranteed minimum-rank solutions of linear matrix equations via
  nuclear norm minimization}.
\newblock {\em preprint}, 2007.

\bibitem{rohde-tsyb09}
Angelika Rohde and Alexandre~B. Tsybakov.
\newblock Estimation of high-dimensional low rank matrices.
\newblock Technical report, Universit\"at Hamburg and Universit\'e Paris 6,
  2009.

\bibitem{MR1694526}
M.~Rudelson.
\newblock Random vectors in the isotropic position.
\newblock {\em J. Funct. Anal.}, 164(1):60--72, 1999.

\bibitem{MR2417886}
Mark Rudelson and Roman Vershynin.
\newblock On sparse reconstruction from {F}ourier and {G}aussian measurements.
\newblock {\em Comm. Pure Appl. Math.}, 61(8):1025--1045, 2008.

\bibitem{MR732693}
Carsten Sch{\"u}tt.
\newblock Entropy numbers of diagonal operators between symmetric {B}anach
  spaces.
\newblock {\em J. Approx. Theory}, 40(2):121--128, 1984.

\bibitem{srebro2005maximum}
N.~Srebro, J.D.M. Rennie, and T.S. Jaakkola.
\newblock {Maximum-margin matrix factorization}.
\newblock {\em Advances in neural information processing systems},
  17:1329--1336, 2005.

\bibitem{srebro2005rank}
N.~Srebro and A.~Shraibman.
\newblock {Rank, trace-norm and max-norm}.
\newblock {\em Learning Theory}, pages 545--560, 2005.

\bibitem{MR1419006}
Michel Talagrand.
\newblock New concentration inequalities in product spaces.
\newblock {\em Invent. Math.}, 126(3):505--563, 1996.

\bibitem{Talagrand:05}
Michel Talagrand.
\newblock {\em The generic chaining}.
\newblock Springer Monographs in Mathematics. Springer-Verlag, Berlin, 2005.
\newblock Upper and lower bounds of stochastic processes.

\bibitem{vanderVaartWellner}
Aad~W. van~der Vaart and Jon~A. Wellner.
\newblock {\em Weak convergence and empirical processes}.
\newblock Springer Series in Statistics. Springer-Verlag, New York, 1996.
\newblock With applications to statistics.

\bibitem{MR2137327}
Hui Zou and Trevor Hastie.
\newblock Regularization and variable selection via the elastic net.
\newblock {\em J. R. Stat. Soc. Ser. B Stat. Methodol.}, 67(2):301--320, 2005.

\end{thebibliography}

\end{document}